\newtheorem{prop}{Proposition}[section]
\newtheorem{lem}[prop]{Lemma}
\newtheorem{thm}[prop]{Theorem}
\newtheorem{remar}[prop]{Remark}
\newtheorem{cor}[prop]{Corollary}
\DeclareMathAlphabet{\mathpzc}{OT1}{pzc}{m}{it}
\DeclareMathOperator{\End}{End}
\DeclareMathOperator{\Hom}{Hom}
\DeclareMathOperator{\Ind}{Ind}
\DeclareMathOperator{\cInd}{c-Ind}
\DeclareMathOperator{\Res}{Res}
\DeclareMathOperator{\Sym}{Sym}
\DeclareMathOperator{\GL}{GL}
\DeclareMathOperator{\SL}{SL}
\DeclareMathOperator{\WD}{WD}
\DeclareMathOperator{\Gal}{Gal}
\DeclareMathOperator{\soc}{soc}
\DeclareMathOperator{\tr}{tr}
\newcommand{\gr}{\mathrm{gr}}
\DeclareMathOperator{\Spec}{Spec}
\DeclareMathOperator{\mSpec}{m-Spec}
\DeclareMathOperator{\Mod}{Mod}
\DeclareMathOperator{\val}{val}
\DeclareMathOperator{\Tor}{Tor}
\DeclareMathOperator{\Ext}{Ext}
\DeclareMathOperator{\dt}{det}
\DeclareMathOperator{\Ban}{Ban}
\DeclareMathOperator{\dualcat}{\mathfrak C}
\newcommand{\Q}{\mathbb{Q}}
\newcommand{\Qp}{\mathbb {Q}_p}
\newcommand{\Zp}{\mathbb{Z}_p}
\newcommand{\Qpbar}{\overline{\mathbb{Q}}_p}
\newcommand{\PP}{\mathbb P}
\newcommand{\FF}{\mathcal F}
\newcommand{\ZZ}{\mathbb Z}
\newcommand{\VV}{\mathbf V}
\newcommand{\Sc}{\mathcal S}
\newcommand{\QQ}{\mathbb Q}
\newcommand{\Fbar}{\overline{F}}
\newcommand{\mm}{\mathfrak m}
\newcommand{\pr}{\mathrm{pr}}
\newcommand{\OO}{\mathcal O}
\newcommand{\TT}{\mathbb T}
\DeclareMathOperator{\wtimes}{\widehat{\otimes}}
\newcommand{\cV}{\check{\mathbf{V}}}
\newcommand{\BB}{\mathfrak B}
\newcommand{\md}{\mathrm m}
\newcommand{\pp}{\mathfrak p}
\newcommand{\sm}{\mathrm{sm}}
\newcommand{\pro}{\mathrm{pro}}
\newcommand{\adm}{\mathrm{adm}}
\newcommand{\alg}{\mathrm{alg}}
\newcommand{\cont}{\mathrm{cont}}
\newcommand{\ladm}{\mathrm{l.adm}}
\newcommand{\glt}{\mathfrak{gl}_2}
\newcommand{\rec}{\mathrm{rec}}
\newcommand{\T}{\mathbb{T}}
\def\et{\mathrm{\acute{e}t}}
\newcommand{\slt}{\mathfrak{sl}_2}
\DeclareMathOperator{\Frob}{Frob}
\newcommand{\Fbreve}{\breve{F}}
\newcommand{\Cp}{\mathbb C_p}
\newcommand{\rbar}{\bar{r}}
\newcommand{\rbarss}{\bar{r}^{\mathrm{ss}}}
\newcommand{\sigmabar}{\bar{\sigma}}
\newcommand{\rhobar}{\bar{\rho}}
\newcommand{\ps}{\mathrm{ps}}
\newcommand{\univ}{\mathrm{univ}}
\newcommand{\lalg}{\mathrm{l.alg}}
\newcommand{\Shat}{\check{\mathcal S}}
\newcommand{\AfF}{\mathbb{A}_F^f}
\newcommand{\cyc}{\mathrm{cyc}}
\newcommand{\Nrd}{\mathrm{Nrd}}
\newcommand{\wt}{\mathbf{w}}
\newcommand{\onealg}{1\text{-}\mathrm{alg}}
\title{On some consequences of a theorem of J. Ludwig}
\author{Vytautas Pa\v{s}k\={u}nas}
\date{\today.}
\keywords{$p$-adic Jacquet-Langlands correspondence; $p$-adic automorphic forms.}
\subjclass[2010]{11S37 (11F85)}
\begin{document} 
\maketitle

\begin{abstract} We prove some qualitative results about the  $p$-adic Jacquet--Lang\-lands
correspondence defined by Scholze, in the $\GL_2(\Qp)$, residually reducible case, by 
using a vanishing theorem proved by Judith Ludwig. In particular, we show that in the cases 
under consideration the global $p$-adic Jacquet--Langlands correspondence can also 
deal with automorphic forms with principal series representations at $p$ in a non-trivial way, unlike its classical counterpart. 
\end{abstract}

\section{Introduction} 

Let $F$ be a finite extension of $\Qp$, and let $L$ be a further sufficiently large finite extension of 
$F$, which will serve as the field of coefficients. Let $\OO$ be the ring of integers in $L$, $\varpi$ a uniformizer in $\OO$ and let $\OO/\varpi=k$ be the residue field. 
 
 To  a smooth admissible representation $\pi$ of $\GL_n(F)$ on an $\OO$-torsion module Scholze 
 in \cite{scholze} attaches a sheaf $\FF_{\pi}$ on the adic space $\PP^{n-1}_{\Cp}$ and shows that the cohomology 
 groups $H^i_{\et}(\PP^{n-1}_{\Cp}, \FF_{\pi})$ are admissible representations of $D_p^{\times}$, the group of units in a central division over $F$ with invariant $1/n$, and carry a continuous commuting action of $G_F$, the absolute Galois group of $F$. His construction is expected to realize both $p$-adic local Langlands and $p$-adic Jacquet--Langlands correspondences. However, these groups seem to 
 be very hard to compute, even to decide whet  her they are zero or not is highly non-trivial.
 
 In order to extend his construction to admissible unitary Banach space representations $\Pi$ of $\GL_n(F)$, the following seems like a sensible thing to do: choose an open bounded $\GL_n(F)$-invariant lattice $\Theta$ in $\Pi$, then $\Theta/\varpi^m$ is an admissible smooth representation
 of $\GL_n(F)$ on an $\OO$-torsion module and we may consider the limit $\varprojlim_m H^i_{\et}(\PP^{n-1}_{\Cp}, \FF_{\Theta/\varpi^m})$ equipped with the $p$-adic topology. We would like to invert $p$ and obtain a Banach space, but the $\OO$-module 
 might not be $\OO$-torsion free, and once we quotient out by the torsion, it might not be Hausdorff.
 Hence, it seems sensible to define $\Shat^i(\Pi):=Q(\Theta)\otimes_{\OO} L$, where $Q(\Theta)$ is 
 the maximal Hausdorff, $\OO$-torsion free quotient of   $\varprojlim_m H^i_{\et}(\PP^{n-1}_{\Cp}, \FF_{\Theta/\varpi^m})$. Even if we could compute  $H^i_{\et}(\PP^{n-1}_{\Cp}, \FF_{\Theta/\varpi})$
 and show it is non-zero we cannot conclude that $\Shat^i(\Pi)$ is non-zero. However, it is easy to see that   if the neighbouring groups $H^{i-1}_{\et}(\PP^{n-1}_{\Cp}, \FF_{\Theta/\varpi})$ and
 $H^{i+1}_{\et}(\PP^{n-1}_{\Cp}, \FF_{\Theta/\varpi})$ both  vanish then the non-vanishing of $H^i_{\et}(\PP^{n-1}_{\Cp}, \FF_{\Theta/\varpi})$ implies the non-vanishing of $\Shat^i(\Pi)$, see section \ref{dualcat}.

 If $F=\Qp$ and $n=2$ and $\pi$ is a principal series representation of $\GL_2(\Qp)$ then 
 Judith Ludwig shows in \cite{ludwig} the vanishing of $H^i_{\et}(\PP^1_{\Cp}, \FF_{\pi})$, when $i=2$. All the other groups for $i>2$ are known to vanish due to Scholze in this case. 
 Moreover, it follows from his results that $H^0(\PP^1_{\Cp}, \FF_{\pi})$ also vanishes, if $\pi^{\SL_2(\Qp)}$ is equal to zero. Thus if we restrict our attention only to those representations $\pi$, which have all irreducible subquotients isomorphic to
 irreducible principal series representations, then the functor $\pi\mapsto H^1_{\et}(\PP^1_{\Cp}, \FF_{\pi})$ is exact. Such subcategories of smooth representations 
 of $\GL_2(\Qp)$ on $\OO$-torsion modules have been studied in \cite{image} and shown to be related to the 
 reducible  $2$-dimensional mod $p$ representations of $G_{\Qp}$. The exactness of the functor 
 $\pi\mapsto H^1_{\et}(\PP^1_{\Cp}, \FF_{\pi})$ allows us to use arguments of Kisin in \cite{kisin}, who used the
 exactness of Colmez's functor to make a connection between the deformation theory of 
 $\GL_2(\Qp)$-representations and deformation theory of $2$-dimensional $G_{\Qp}$-representations. For simplicity we assume that $p>2$ in this paper. 
 
  \begin{thm}\label{thm_A}
  Let $r: G_{\Qp}\rightarrow \GL_2(L)$ be a continuous representation 
with $\rbarss= \chi_1\oplus \chi_2$, where 
$\chi_1, \chi_2: G_{\Qp}\rightarrow k^{\times}$ are characters, such that $\chi_1\chi_2^{-1}\neq \omega^{\pm 1}$, where $\omega$ is the  mod $p$ cyclotomic character.
Let $\Pi$ be the admissible unitary $L$-Banach space representation of $\GL_2(\Qp)$ 
corresponding to $r$ via the $p$-adic local Langlands correspondence for $\GL_2(\Qp)$. Then 
$\Shat^1(\Pi)\neq 0$. 
\end{thm}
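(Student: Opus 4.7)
The plan is to reduce the non-vanishing of $\Shat^1(\Pi)$ to the residual non-vanishing of $\check{H}^1(\bar\Pi) := H^1_{\et}(\PP^1_{\Cp}, \FF_{\bar\Pi})$, and then establish that residual non-vanishing by a Kisin-style deformation-theoretic argument exploiting exactness.

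First, choose a $\GL_2(\Qp)$-invariant open bounded lattice $\Theta\subset\Pi$ and set $\bar\Pi := \Theta\otimes_\OO k$. The hypothesis $\chi_1\chi_2^{-1}\neq\omega^{\pm1}$, combined with the $p$-adic Langlands correspondence for $\GL_2(\Qp)$ and the block decomposition established in \cite{image}, forces every irreducible subquotient of $\Theta/\varpi^m$ to be an irreducible principal series. Thus each $\Theta/\varpi^m$ lies in the subcategory on which the functor $\check{H}^1 := H^1_{\et}(\PP^1_{\Cp}, \FF_{-})$ is exact, as recalled in the introduction.

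Applying $\check{H}^1$ to the short exact sequences
\[
0 \to \Theta/\varpi \xrightarrow{\cdot\varpi^{m-1}} \Theta/\varpi^m \to \Theta/\varpi^{m-1} \to 0
\]
yields short exact sequences of cohomology groups with surjective transition maps. A straightforward induction on $m$, using $\OO$-linearity of the functor, then identifies $\varpi^{m-1}\check{H}^1(\Theta/\varpi^m)$ with $\check{H}^1(\bar\Pi)$, so that $\check{H}^1(\Theta/\varpi^m)$ is flat over $\OO/\varpi^m$. Consequently $M := \varprojlim_m \check{H}^1(\Theta/\varpi^m)$ is $\varpi$-adically complete, Hausdorff, $\varpi$-torsion free, and satisfies $M/\varpi M \cong \check{H}^1(\bar\Pi)$. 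In particular $Q(\Theta) = M$, so $\Shat^1(\Pi) = M[1/p]$ is non-zero if and only if $\check{H}^1(\bar\Pi) \neq 0$.

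The main obstacle is therefore this residual non-vanishing, and here I would follow the strategy of Kisin in \cite{kisin}. Exactness of $\check{H}^1$ means that the universal deformation of $\bar\Pi$ in the relevant block, controlled via $p$-adic Langlands by the Galois deformation ring of $\rbarss$, is transported by the functor to a compatible system of deformations of $\check{H}^1(\bar\Pi)$; the image carries a continuous $G_{\Qp}$-action matching the universal deformation of $\rbar$. This global picture forces the image not to be the zero module, which concretely reduces to proving $\check{H}^1(\pi)\neq 0$ for at least one irreducible principal series $\pi$ in the block. I would expect to establish this by combining an explicit description of Scholze's sheaf $\FF_\pi$ on $\PP^1_{\Cp}$ with the computational techniques of \cite{ludwig} and a Galois-compatibility argument — this single non-vanishing statement is the technical heart of the proof.
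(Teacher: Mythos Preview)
Your reduction step is correct and matches the paper's Proposition~\ref{enough}: by Ludwig's theorem the functor is exact on this block, so $\Shat^1(\Pi)\neq 0$ reduces to showing that $H^1_{\et}(\PP^1_{\Cp},\FF_{\pi_i})\neq 0$ for at least one of the two principal series $\pi_1,\pi_2$ in $\BB_{\rbarss}$.

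The genuine gap is in what you call the technical heart. Your Kisin-style deformation argument is circular: if $\check H^1$ happened to vanish identically on the block, then exactness would simply transport the universal deformation to the zero module, and the $G_{\Qp}$-action on the image would be the trivial action on $0$, perfectly compatible with any deformation of $\rbar$. Nothing in the deformation picture \emph{forces} the image to be nonzero; Kisin's method in \cite{kisin} transfers flatness and module-theoretic information once you already have a nonzero target, it does not manufacture non-vanishing. Likewise, your fallback plan of an ``explicit description of Scholze's sheaf'' plus the techniques of \cite{ludwig} is precisely what the paper warns against in the introduction: these groups ``seem to be very hard to compute, even to decide whether they are zero or not is highly non-trivial,'' and Ludwig's methods prove \emph{vanishing} of $H^2$, not non-vanishing of $H^1$.

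The paper's actual proof of non-vanishing is global, not local, and you have missed this entirely. One chooses an absolutely irreducible $\rhobar:G_{F,S}\to\GL_2(k)$ with $\rhobar|_{G_{\Qp}}^{\mathrm{ss}}\cong\chi_1\oplus\chi_2$, and takes $\pi=S_\psi(U^p,L/\OO)_\mm$, the completed cohomology of a definite quaternion algebra localized at the corresponding Hecke ideal. Proposition~\ref{right_object} shows this $\pi$ lies in $\Mod^{\ladm}_G(\OO)_{\rbarss}$. Scholze's global comparison theorem (\eqref{scholze_3} in the paper) identifies $\Sc^1(\pi)$ with the completed \'etale cohomology $\widehat H^1_\psi(U^p,L/\OO)_\mm$ of a tower of Shimura curves for the quaternion algebra ramified at $p$. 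One then proves this is nonzero (Theorem~\ref{non-vanish}) by producing, via the \emph{classical} Jacquet--Langlands correspondence, a classical automorphic form contributing to its locally algebraic vectors. This global input---Scholze's comparison plus classical Jacquet--Langlands---is the missing idea.
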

 
 Previously such a result was known only in the case, when $r$ is a (twist of a) potentially semi-stable, 
 non-crystabelline representation, which lies on an automorphic component of a potentially semistable deformation ring, proved by Chojecki and Knight in \cite{chojecki_knight}. They prove it by patching and showing that locally algebraic vectors in $\Shat^1(\Pi)$ are non-zero. Their  argument relies on the theorem of Emerton \cite{interpolate}  which allows to interpret classical automorphic forms as locally algebraic vectors in completed cohomology, and enables them to handle only the representations which are "discrete series at $p$". For example, their argument 
 does not work for representations which become crystalline after restriction to a Galois group 
 of an abelian extension of $\Qp$ (prinicipal series at $p$), or representations, which do not become 
 potentially semistable after twisting by a character (non-classical). 
 
 Our argument works as follows: by the mod $p$ local Langlands correspondence to $\rbarss$ one may associate two principal series representations $\pi_1$ and $\pi_2$. We know from \cite{CDP}, 
 \cite{image} that the semi-simplification of $\Pi^0/\varpi$ is isomorphic to $\pi_1\oplus\pi_2$, 
 where $\Pi^0$ is a unit ball in $\Pi$.
Using the exactness results described above it is enough to show that at least one of
$H^1_{\et}(\PP^1_{\Cp}, \FF_{\pi_1})$, $H^1_{\et}(\PP^1_{\Cp}, \FF_{\pi_2})$ does not vanish. 
To show that, it is enough to find some $\pi$ with all irreducible subquotients isomorphic to 
either $\pi_1$ or $\pi_2$, such that $H^1_{\et}(\PP^1_{\Cp}, \FF_{\pi})$ does not vanish. As we have 
already mentioned, it seems  impossible to compute 
$H^1_{\et}(\PP^1_{\Cp}, \FF_{\pi})$ in general. However, Scholze manages to do so for certain representations coming from geometry. If $\pi=S(U^p, \OO/\varpi^n)$ is a  $0$-th completed cohomology group of a tower of zero dimensional 
Shimura varieties associated to a quaternion algebra $D_0$ over\footnote{In the main body of the paper we work with a totally real field $F$, such that $F_{\pp}=\Qp$ for a place $\pp$. We assume that $F=\QQ$ for the purpose of this introduction.} $\QQ$ 
 which is split at  $p$ and ramified at $\infty$, Scholze  shows in \cite{scholze} that 
 $H^1_\et(\PP^1_{\Cp}, \FF_\pi)$ is isomorphic as $G_{\Qp}\times D_p^\times$-representation to the $1$-st completed cohomology group $\widehat{H}^1(U^p, \OO/\varpi^n)$ of a  tower of Shimura curves associated to a quaternion algebra $D$, which is 
 ramified at $p$, split at $\infty$, and has the same ramification as $D_0$ at all the other places. 
 Here $U^p$ denotes some fixed tame level. 
 Scholze  also shows that this isomorphism respects the action by Hecke operators on both sides. We show that a localization of $S(U^p, \OO/\varpi^n)$ at a maximal 
 ideal $\mm$ of the Hecke algebra corresponding to an absolutely irreducible Galois representation $\rhobar: G_{\QQ, S}\rightarrow \GL_2(k)$, such that the semi-simplification of $\rhobar|_{G_{\Qp}}$ is equal $\chi_1\oplus \chi_2$ as above, has all irreducible subquotients isomorphic to $\pi_1$ or $\pi_2$ as above. By applying Scholze's functor to this representation we obtain $\widehat{H}^1(U^p, \OO/\varpi^n)_\mm$, which we can show is non-zero by using the classical Jacquet--Langlands correspondence. 
 
 Let $K$ be any open uniform pro-$p$ subgroup of $D_p^{\times}$, then its completed group
 ring $\OO[[K]]$ and the localization $\OO[[K]]\otimes_\OO L$ are both Auslander regular rings, 
 and as a consequence there is a good dimension theory for their finitely generated modules, 
 generalizing the Krull dimension.  If $\mathrm B$ is an admissible unitary Banach space 
 representation of $D_{p}^{\times}$ then its Schikhof dual $\mathrm B^d$ is a finitely generated
 $\OO[[K]]\otimes_\OO L$-module and we define the $\delta$-dimension of $\mathrm B$ to be the 
 dimension of $\mathrm B^d$ in the above sense.
 
 \begin{thm}\label{thm_B} If  $\Pi$ is as in Theorem \ref{thm_A} then the $\delta$-dimension 
 of $\Shat^1(\Pi)$ is one. 
 \end{thm}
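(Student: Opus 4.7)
The plan is to prove $\delta\text{-}\dim \Shat^1(\Pi) = 1$ by identifying $\Shat^1(\Pi)$, up to multiplicities that do not affect dimension, with a Hecke-localized completed cohomology of a tower of Shimura curves, using the same style of globalization as in the proof of Theorem~\ref{thm_A}. The upper bound $\leq 1$ will reflect that the Shimura curves on the division-algebra side are one-dimensional rigid analytic spaces, while the lower bound $\geq 1$ will rule out the case in which $\Shat^1(\Pi)$ is finite-dimensional over $L$.

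For the upper bound, I would first globalize: choose a totally real field $F$ with $F_\pp = \Qp$ at a prime $\pp\mid p$, a definite quaternion algebra $D_0/F$ split at $\pp$, a tame level $U^p$, and an absolutely irreducible global representation $\rhobar\colon G_{F,S}\to \GL_2(k)$ with $\rhobar|_{G_{\Qp}}^{\mathrm{ss}} = \chi_1\oplus\chi_2$, such that the $\mm$-localized completed cohomology $S(U^p, L)_\mm$ of the tower of zero-dimensional Shimura varieties attached to $D_0$ realizes $\Pi$ as an isotypic constituent via the $p$-adic Langlands correspondence and local-global compatibility (as developed by Emerton and Paskunas). I would then apply Scholze's construction termwise to a lattice inside $S(U^p, L)_\mm$, using the exactness of $\pi \mapsto H^1_{\et}(\PP^1_{\Cp}, \FF_\pi)$ on the subcategory of representations with irreducible subquotients among $\pi_1,\pi_2$ (which rests on the vanishing theorems of Ludwig and Scholze). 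Scholze's isomorphism at each finite level then identifies the output with the $\mm$-localized completed cohomology $\widehat{H}^1(U^p, L)_\mm$ of the tower of Shimura curves associated to the quaternion algebra $D$ obtained from $D_0$ by swapping invariants at $\pp$ and at an infinite place. For non-Eisenstein $\mm$, this completed cohomology of a one-dimensional Shimura variety has $\delta$-dimension equal to one as a $D_p^\times$-representation, by the kind of bounds going back to Emerton and refined by Newton and others. Local-global compatibility, combined with the above exactness, then transfers the bound to $\Shat^1(\Pi)$.

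For the lower bound, Theorem~\ref{thm_A} yields $\Shat^1(\Pi)\neq 0$, so the $\delta$-dimension is at least zero. If it were exactly zero, $\Shat^1(\Pi)$ would be finite-dimensional over $L$, hence a smooth finite-dimensional $D_p^\times$-representation, and this is incompatible with the infinite-dimensionality of the globally constructed $\widehat{H}^1(U^p, L)_\mm$, whose non-vanishing follows from classical Jacquet-Langlands exactly as in the proof of Theorem~\ref{thm_A}.

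The main obstacle, I expect, is making the upper-bound identification rigorous: Scholze's functor is defined on torsion smooth representations, and one must commute it with the operation $\Theta\mapsto Q(\Theta)\otimes_\OO L$, the $\varpi$-adic inverse limit, the $\mm$-localization, and inversion of $p$. The exactness on the principal-series subcategory makes this plausible, but controlling Hausdorffness and torsion-freeness at each step, and ensuring that the resulting $D_p^\times$-Banach space retains the same $\delta$-dimension as the globally defined $\widehat{H}^1(U^p, L)_\mm$, is the technical heart of the argument and will likely require patching techniques in the spirit of Caraiani-Emerton-Gee-Geraghty-Paskunas-Shin.
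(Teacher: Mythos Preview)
Your proposal has a genuine gap: the assertion that ``the completed cohomology of a one-dimensional Shimura variety has $\delta$-dimension equal to one as a $D_p^\times$-representation'' is false, and this is the step your upper bound rests on. With central character fixed, $\widehat{H}^1_{\psi,\lambda}(U^{\pp},\OO)_{\mm}^d$ is projective in $\Mod^{\pro}_{U_{\pp},\zeta}(\OO)$, hence its $\delta$-dimension equals $\dim D_{\pp}^\times - \dim Z(D_{\pp}^\times) + 1 = 4$ (and $3$ after inverting $p$), not $1$. The dimension of the Shimura curve has nothing to do with the $\delta$-dimension of its completed cohomology; the latter is governed by the dimension of the $p$-adic group $D_{\pp}^\times$. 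Likewise, your lower-bound argument does not work: the infinite-dimensionality of the global $\widehat{H}^1$ does not by itself prevent a single $\Shat^1(\Pi)$ from being finite-dimensional.

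The paper's argument supplies exactly the missing idea. One first shows (on the $\GL_2$ side) that $S_{\psi,\lambda}(U^{\pp},\OO)_{\mm}^d$ is Cohen--Macaulay of $\delta$-dimension $4$ and that the Hecke algebra acting on it has Krull dimension $\le 3$; the Gee--Newton miracle-flatness criterion then produces a regular subring $A\cong\OO[[x,y]]$ of the Hecke algebra over which this module is flat. Since $\Shat^1$ is exact on the block in question, flatness transfers to $\widehat{H}^1_{\psi,\lambda}(U^{\pp},\OO)_{\mm}^d$, and the dimension formula for flat modules gives that the fibre $k\otimes_A \widehat{H}^1_{\psi,\lambda}(U^{\pp},\OO)_{\mm}^d$ has $\delta$-dimension $4-3=1$. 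By exactness again this fibre is filtered by copies of $\Shat^1(\pi_1^{\vee})$ and $\Shat^1(\pi_2^{\vee})$, so $\max\bigl(\delta(\Shat^1(\pi_1^{\vee})),\delta(\Shat^1(\pi_2^{\vee}))\bigr)=1$. This is a statement about the two mod-$p$ principal series alone, independent of any globalization; since $(\Theta/\varpi)^{\mathrm{ss}}\cong\pi_1\oplus\pi_2$ for any $\Pi$ in the block, one gets $\delta(\Shat^1(\Theta^d/\varpi))=1$, and then flatness over $\OO$ yields $\delta(\Shat^1(\Theta^d))=2$, i.e.\ $\delta(\Shat^1(\Pi))=1$. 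No patching is needed, and the technical worries you raise about commuting $\Shat^1$ with limits disappear because one works entirely at the level of the mod-$p$ fibre.
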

 
 The Theorem is proved by using the observation of Kisin in \cite{kisin} that exact functors take flat 
 modules to flat modules and the results of Gee--Newton \cite{gee_newton} on miracle flatness in non-commutative setting. 
 
 One can show that a Banach space representation of $D_p^{\times}$ has $\delta$-dimension  
 zero if and only if it is finite dimensional as an $L$-vector space. Moreover, the zero dimensional 
 representations build a Serre subcategory, and thus one may pass to a quotient category. Informally this means that two $1$-dimensional Banach space representations are isomorphic in the quotient category if they differ by a $0$-dimensional Banach space representation. As Kohlhaase has pointed out to me, the  quotient category
 of Banach space representation of $\delta$-dimension at most $1$ by the zero dimensional Banach space representations  is noetherian and has an involution, hence it is also
 artinian, and hence every object in the quotient category has finite length. From this it is easy to deduce the following. 
  
  \begin{cor}\label{Cor_A} If  $\Pi$ is  as in Theorem \ref{thm_A} then $\Shat^1(\Pi)$ is of finite length 
  in the category of admissible unitary $L$-Banach space representations of $D_{p}^{\times}$ if and only if it has finitely many irreducible subquotients, which are finite dimensional as $L$-vector 
  spaces. 
 \end{cor}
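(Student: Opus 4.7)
The forward implication is immediate: an object of finite length has only finitely many irreducible subquotients, hence in particular only finitely many that are finite dimensional. For the converse, I set $M := \Shat^1(\Pi)$. By Theorem \ref{thm_B}, $M$ has $\delta$-dimension one and so lies in the full subcategory $\mathcal{C}_{\le 1}$ of admissible unitary $L$-Banach space representations of $D_p^\times$ of $\delta$-dimension at most one. Let $\mathcal{C}_0 \subset \mathcal{C}_{\le 1}$ be the Serre subcategory of finite dimensional objects and $\mathcal{Q} := \mathcal{C}_{\le 1}/\mathcal{C}_0$ the quotient. By the observation of Kohlhaase recalled just before the corollary, $\mathcal{Q}$ is both noetherian and artinian, so the image $\bar M$ of $M$ in $\mathcal{Q}$ has finite length, say $n$.

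I would then lift a composition series of $\bar M$ to a filtration $0 = M_0 \subseteq M_1 \subseteq \dots \subseteq M_n = M$ in $\mathcal{C}_{\le 1}$, whose graded pieces $M_i/M_{i-1}$ are either finite dimensional (when the corresponding $\mathcal{Q}$-factor vanishes) or map to a simple object of $\mathcal{Q}$. Since length in the ambient category is additive on short exact sequences, it suffices to show each graded piece has finite length. The finite dimensional pieces trivially do, with length bounded by their $L$-dimension. For a graded piece $N = M_i/M_{i-1}$ simple in $\mathcal{Q}$, any proper nonzero subobject $N' \subsetneq N$ must satisfy $N' \in \mathcal{C}_0$ or $N/N' \in \mathcal{C}_0$, by $\mathcal{Q}$-simplicity; hence in any chain of subobjects of $N$ at most one graded piece has $\delta$-dimension one, while all other simple subquotients are finite dimensional.

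These finite dimensional simple subquotients of $N$ are also finite dimensional simple subquotients of $M$, of which by hypothesis only finitely many occur (counted with multiplicity), so only finitely many occur in $N$. Therefore $N$ has finite length, and summing along the filtration yields the finite length of $M$. The only delicate point is the structural assertion about $\mathcal{Q}$-simple objects of $\mathcal{C}_{\le 1}$ containing at most one $\delta$-dimension one ``copy'', but this is the standard content of length-additivity along the Serre quotient $\mathcal{C}_{\le 1} \to \mathcal{Q}$ and is presumably what the paper has in mind in calling the deduction ``easy''.
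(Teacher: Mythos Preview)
Your proposal is correct and supplies exactly the kind of argument the paper has in mind: the paper gives no proof at all beyond the sentence ``From this it is easy to deduce the following,'' where ``this'' refers to Kohlhaase's observation that $\mathcal{Q}=\mathcal{C}_{\le 1}/\mathcal{C}_0$ is noetherian and artinian. So there is nothing in the paper to compare your argument against beyond that one hint, and your use of finite length in $\mathcal{Q}$ is precisely what is intended.

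Two minor comments on presentation. First, when you lift a composition series of $\bar M$ of length $n$, each graded piece $M_i/M_{i-1}$ has \emph{simple} (not zero) image in $\mathcal{Q}$, so the clause ``either finite dimensional (when the corresponding $\mathcal{Q}$-factor vanishes)'' does not actually arise; this does no harm but can be dropped. Second, your appeal to ``counted with multiplicity'' is the right reading of the hypothesis, but since multiplicity is not a priori defined for objects of possibly infinite length, it is cleaner to argue the converse by contrapositive: if $M$ fails to have finite length, take any infinite strict chain in $M$; the images in $\mathcal{Q}$ must stabilize because $\bar M$ has finite length, so all but finitely many graded pieces lie in $\mathcal{C}_0$, and refining those finite-dimensional pieces yields infinitely many finite-dimensional irreducible subquotients. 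This avoids the need to speak of multiplicity in $M$ before knowing $M$ has finite length, and also avoids lifting the composition series altogether. Your argument is equivalent to this, just organized less directly.
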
 
 
 We got quite excited about this Corollary at first, since we hoped that it might imply that $\Shat^1(\Pi)$ is of finite length as a Banach space representation of $D_p^\times$ by some formal representation theoretic arguments. However, here is an example suggesting that one should be cautious. If $K=\Zp$ then $\OO[[K]]\cong \OO[[x]]$, a commutative formal power series ring in one variable and  the Banach space of continuous functions on $K$ has $\delta$-dimension 
 one and is irreducible in the quotient category, which is equivalent to the category of finite dimensional vector spaces over the fraction field of $\OO[[x]]$; its Schikhof dual is isomorphic to 
 $\OO[[x]] [1/p]$ 
 and all irreducible subquotients are finite dimensional. See however, Theorem \ref{thm_D} below.
 
 \subsection{Global $p$-adic Jacquet--Langlands correspondence} Theorem \ref{thm_A} has a global application, which we state in the introduction for $F=\QQ$. The 
results are proved for a  totally real field $F$ such that $F_{\pp}=\Qp$, see Theorems \ref{main}, \ref{main_2} and Proposition \ref{yeti}. Scholze has shown in \cite[Cor.7.3]{scholze} that $$S(U^p, \OO)_{\mm}:=\varprojlim_n S(U^p, \OO/\varpi^n)_{\mm}, \quad \widehat{H}^1(U^p, \OO)_{\mm}:=\varprojlim_n \widehat{H}^1(U^p, \OO/\varpi^n)_{\mm}$$ have an action of the same Hecke algebra $\TT(U^p)_{\mm}$. 
Moreover, there is a surjective homomorphism of local rings $R_{\rhobar}\twoheadrightarrow \TT(U^p)_{\mm}$, where $R_{\rhobar}$ is the universal deformation ring of $\rhobar$.

\begin{thm}\label{thm_C} Let $x\in \mSpec \TT(U^p)_{\mm}[1/p]$ be such that the restriction of the corresponding 
Galois representation $\rho_x: G_{\QQ,S}\rightarrow \GL_2(\kappa(x))$ to $G_{\Qp}$ is irreducible. 
Then $(\widehat{H}^1(U^{p}, \OO)_\mm\otimes L)[\mm_x]$  is non-zero if and only if 
$(S(U^p, \OO)_{\mm}\otimes_\OO L )[\mm_x]$ is non-zero.

In this case, there is an isomorphism of admissible unitary $\kappa(x)$-Banach space representations
of $G_{\Qp}\times D_{p}^{\times}$: 
\begin{equation}\label{pups}
(\widehat{H}^1(U^{p}, \OO)_\mm\otimes_{\OO} L)[\mm_x]\cong \Shat^1(\Pi)^{\oplus n},
\end{equation}
where $\Pi$ is the absolutely irreducible 
$\kappa(x)$-Banach space representation corresponding to $\rho_x|_{G_{\Qp}}$ via the $p$-adic local Langlands correspondence for $\GL_2(\Qp)$.
In particular, the $\delta$-dimension of $(\widehat{H}^1(U^{p}, \OO)_\mm\otimes_\OO L)[\mm_x]$ is $1$.
\end{thm}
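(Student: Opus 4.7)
The plan is to combine Scholze's geometric comparison of the completed cohomologies of the definite and indefinite Shimura towers \cite[Cor.~7.3]{scholze} with Ludwig's vanishing theorem, Emerton's local--global compatibility for $\GL_2(\Qp)$, and Theorems \ref{thm_A}, \ref{thm_B}. Since $\mm$ corresponds to a $\rhobar$ with $\rhobarss|_{G_{\Qp}} = \chi_1 \oplus \chi_2$ and $\chi_1\chi_2^{-1} \neq \omega^{\pm 1}$, the results of \cite{image} imply that every irreducible $\GL_2(\Qp)$-subquotient of $S(U^p, \OO/\varpi^n)_\mm$ lies in the two-element set $\{\pi_1, \pi_2\}$ of principal series attached to $\chi_1, \chi_2$. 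Restricted to this subcategory, Ludwig's vanishing of $H^2_\et(\PP^1_{\Cp}, \FF_{\pi_i})$ together with the vanishing of $H^0_\et(\PP^1_{\Cp}, \FF_{\pi_i})$ (which follows from $\pi_i^{\SL_2(\Qp)} = 0$) renders $\pi \mapsto H^1_\et(\PP^1_{\Cp}, \FF_\pi)$ exact, and by Section \ref{dualcat} compatible with the definition of $\Shat^1$.

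Having this exactness in hand, I would pass to the inverse limit in Scholze's finite-level isomorphism $\widehat{H}^1(U^p, \OO/\varpi^n)_\mm \cong H^1_\et(\PP^1_{\Cp}, \FF_{S(U^p, \OO/\varpi^n)_\mm})$ and invert $p$ to obtain an identification
$$
\widehat{H}^1(U^p, \OO)_\mm \otimes_\OO L \;\cong\; \Shat^1\bigl(S(U^p, \OO)_\mm\bigr)
$$
of $G_{\Qp}\times D_p^\times$-Banach representations, compatible with Hecke. Emerton's local--global compatibility then gives, thanks to the irreducibility of $\rho_x|_{G_{\Qp}}$,
$$
(S(U^p, \OO)_\mm \otimes_\OO L)[\mm_x] \cong \Pi^{\oplus n}
$$
for some $n \geq 0$ (the automorphic multiplicity), with $\Pi$ the absolutely irreducible Banach representation corresponding to $\rho_x|_{G_{\Qp}}$. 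Exactness of $\Shat^1$ on the principal-series subcategory makes it commute with the $\mm_x$-torsion operation after $\otimes L$, yielding the isomorphism \eqref{pups}. The ``non-zero iff non-zero'' assertion then follows from Theorem \ref{thm_A}, which ensures $\Shat^1(\Pi) \neq 0$, so the left-hand side vanishes iff $n=0$. The $\delta$-dimension assertion is immediate from Theorem \ref{thm_B}, since $\delta$-dimension is stable under finite direct sums.

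The main technical hurdle is upgrading the finite-level comparison to one of \emph{Banach spaces}. Concretely one has to argue that $\varprojlim_n H^1_\et(\PP^1_{\Cp}, \FF_{S(U^p, \OO/\varpi^n)_\mm})$ is already $\OO$-flat and Hausdorff, so that the ``maximal Hausdorff torsion-free quotient'' functor $Q(\cdot)$ entering the definition of $\Shat^1$ acts as the identity. The admissibility of $\widehat{H}^1(U^p, \OO)_\mm$ as a $D_p^\times$-representation, combined with the exactness of Scholze's functor on the principal-series subcategory (which ensures the needed Mittag--Leffler condition), should supply the required flatness and Hausdorffness, but this is the step that demands the most care.
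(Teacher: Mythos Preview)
Your proposal is correct and follows essentially the same route as the paper: exactness of $\Sc^1$ on the block (Corollary~\ref{ludwig_cor}), Scholze's comparison upgraded to Banach level (Proposition~\ref{scholze_compatible}), the decomposition $(S(U^p,\OO)_\mm\otimes L)[\mm_x]\cong\Pi^{\oplus n}$, then Theorems~\ref{thm_A} and~\ref{thm_B} for non-vanishing and $\delta$-dimension.

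Two small remarks. First, the isomorphism $(S(U^p,\OO)_\mm\otimes L)[\mm_x]\cong\Pi^{\oplus n}$ is not taken from Emerton's paper \cite{emerton_lg} here; the paper establishes it independently in Section~\ref{lg} (Proposition~\ref{equal_actions} and its Corollary) by showing that the Hecke action of $R^{\ps}_{\tr\rbar}$ on $S_\psi(U^\pp,\OO)_\mm$ coincides with its action as the Bernstein centre of $\Mod^{\ladm}_G(\OO)_{\rbarss}$, via capture, Berger--Breuil, and Colmez's compatibility, and then invokes the structural results of \cite{image}. For $F=\QQ$ your citation is morally right, but the paper's argument is what covers the totally real case. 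Second, the ``technical hurdle'' you flag is dissolved in the paper by working dually throughout: Proposition~\ref{scholze_compatible} is phrased as an isomorphism of Schikhof duals, and the required $\OO$-torsion-freeness of $\widehat{H}^1_{\psi,\lambda}(U^\pp,\OO)_\mm^d$ comes from the injectivity of $\widehat{H}^1_{\psi}(U^p,L/\OO)_\mm$ in $\Mod^{\sm}_{U_p,\psi}(\OO)$ (via \cite{newton_pJL}), rather than from a Mittag--Leffler argument on the Scholze side.
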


One should think of the first part of the theorem as a global Jacquet--Langlands correspondence between the $p$-adic automorphic forms on $D_0^{\times}$ and $D^{\times}$. In particular, since 
$D_0$ is split at $p$, one may always find a classical automorphic form, which is principal series at $p$, such that $(S(U^{p}, \OO)_\mm\otimes_{\OO} L)[\mm_x]$ is non-zero, where $\mm_x$ is the corresponding maximal ideal. The theorem tells us 
that there is a $p$-adic automorphic form on $D^{\times}$ corresponding to it. Such a form cannot be classical, since the classical Jacquet--Langlands correspondence cannot cope with principal series. 

It was pointed out to us by Sean Howe that in his 2017 University of Chicago PhD thesis, \cite{sean_howe}, he proves an analogue of the first part of the Theorem above in the setting when $D_0$ is a quaternion algebra over $\Q$ ramified at $p$ and $\infty$ and 
$D=\GL_2$, for the maximal ideals corresponding to overconvergent Galois representations. He conjectures  that the analogue of \eqref{pups} holds in his setting.

\begin{thm}\label{thm_D} Assume the setup of Theorem \ref{thm_C}. Let $D_{p}^{\times, 1}$ be the subgroup of $D_p^{\times}$ of elements with reduced norm equal to $1$. 
Let $\Shat^1(\Pi)^{\onealg}$ be the subspace of locally algebraic vectors for the action of $D_{p}^{\times, 1}$ on $\Shat^1(\Pi)$. Then $\Shat^1(\Pi)^{\onealg}$ is a finite dimensional $L$-vector space. If it is non-zero then $\rho_x|_{G_{\Qp}}$ is a twist of a potentially semistable representation, which does 
not become crystalline after restriction to the Galois group of any abelian extension of $\Qp$.
The quotient $\Shat^1(\Pi)/\Shat^1(\Pi)^{\onealg}$ contains an irreducible closed subrepresentation of $\delta$-dimension $1$.
\end{thm}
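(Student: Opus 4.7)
The plan is to use Theorem \ref{thm_C} to realize $\Shat^1(\Pi)$, up to a finite multiplicity, as a direct summand of $(\widehat H^1(U^p, \OO)_\mm\otimes_{\OO} L)[\mm_x]$, the localized completed cohomology of the tower of Shimura curves for $D^\times$ (ramified at $p$), and then to exploit the standard comparison of locally algebraic vectors in this completed cohomology with classical automorphic forms on $D^\times$, combined with the classical Jacquet--Langlands correspondence, in order to obtain parts (1) and (2). Part (3) is extracted formally from Theorem \ref{thm_B} together with the artinian property of the quotient category of admissible unitary Banach space representations of $\delta$-dimension at most one modulo those of $\delta$-dimension zero.

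For parts (1) and (2): after twisting $\Pi$ by a suitable continuous character of the center, which one can always do without changing the hypotheses, one may assume that its central character is locally algebraic; the locally algebraic vectors for $D_p^{\times,1}$ then agree with those for $D_p^\times$. The isomorphism of Theorem \ref{thm_C} identifies these, up to the multiplicity $n$, with the $\mm_x$-torsion in the locally algebraic vectors of $\widehat H^1(U^p,\OO)_\mm\otimes_\OO L$. A standard comparison in the style of \cite{interpolate} expresses this space of locally algebraic vectors in terms of classical automorphic forms on $D^\times$ of fixed tame level $U^p$ and with infinitesimal character at $p$ determined by the Hodge--Tate weights of $\rho_x|_{G_{\Qp}}$, and such spaces are finite dimensional over $L$; this yields (1). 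For (2), since $D$ is ramified at $p$, the classical Jacquet--Langlands correspondence transfers such a classical automorphic form to a classical form on $\GL_2/\QQ$ whose local component at $p$ is a discrete series representation, i.e.\ a twist of the Steinberg representation or a supercuspidal. Under the $p$-adic Langlands correspondence for $\GL_2(\Qp)$, this is exactly the condition that $\rho_x|_{G_{\Qp}}$ is a twist of a potentially semistable representation which does not become crystalline after restriction to the Galois group of any abelian extension of $\Qp$.

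For part (3), by Theorem \ref{thm_B} the representation $\Shat^1(\Pi)$ has $\delta$-dimension one; since $\Shat^1(\Pi)^{1-\alg}$ is finite dimensional over $L$ by (1), and hence of $\delta$-dimension zero, the quotient $M:=\Shat^1(\Pi)/\Shat^1(\Pi)^{1-\alg}$ again has $\delta$-dimension one. By the artinian property of the quotient category discussed after Corollary \ref{Cor_A}, the image of $M$ in this quotient category is of finite length. Passing to the Schikhof dual $M^d$, which is a finitely generated module of dimension one over the Auslander regular ring $\OO[[K]]\otimes_\OO L$, and extracting a simple quotient of the pure-dimension-one part of $M^d$, gives by duality an irreducible closed subrepresentation of $M$ of $\delta$-dimension one.

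The hard part is part (2), where one must translate properties of the locally algebraic vectors on the $D^\times$-side into the intrinsic Galois-theoretic condition on $\rho_x|_{G_{\Qp}}$ in the statement. This requires carefully matching the smooth part of the locally algebraic vectors with the local $L$-packet on the $\GL_2(\Qp)$-side of the $p$-adic Langlands correspondence, as well as bookkeeping of central characters and twists so that $D_p^{\times,1}$-locally algebraic vectors are indeed captured by classical automorphic forms on $D^\times$.
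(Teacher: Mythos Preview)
Your approach to parts (1) and (2) is essentially the one the paper indicates for the case $F=\QQ$: realize $\Shat^1(\Pi)$ inside completed cohomology via Theorem~\ref{thm_C}, twist to make the central character locally algebraic, identify the locally algebraic vectors with classical automorphic forms on $D^\times$ via Emerton's results, and read off the ``discrete series at $p$'' condition from classical Jacquet--Langlands. The paper proves the more general totally real version (Proposition~\ref{yeti}), where one cannot simply twist away the problem and must allow an extra non--locally-algebraic character $\eta=\sqrt{\pr(\chi_{\cyc})}$; but for $F=\QQ$, as assumed in Theorem~\ref{thm_D}, your outline is the right one, and the paper explicitly remarks that the argument simplifies in exactly this way.

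Your argument for part (3), however, has a genuine gap. You try to produce an irreducible closed subrepresentation of $\delta$-dimension $1$ by passing to the quotient category (or dually, by extracting a ``simple quotient of the pure-dimension-one part of $M^d$''). But finite length in the quotient category, or the existence of a simple object there, does \emph{not} in general lift to an irreducible subobject of $\delta$-dimension $1$ in the actual category of admissible Banach representations. The paper itself warns of exactly this: for $K=\Zp$, the Banach space $C(\Zp,L)$ has $\delta$-dimension $1$ and is irreducible in the quotient category, yet every irreducible subquotient of it is finite dimensional. So your extraction step is precisely the kind of move that fails in this cautionary example.

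The paper's proof of this step (Theorem~\ref{main_2}) avoids the quotient category entirely and instead uses the specific structure of $M=\Shat^1(\Pi)/\Shat^1(\Pi)^{1-\alg}$. Since $M$ is admissible it contains some irreducible closed subrepresentation $\mathrm B$; the point is to rule out $\delta(\mathrm B)=0$. If $\delta(\mathrm B)=0$ then $\mathrm B$ is finite dimensional, hence its preimage in $\Shat^1(\Pi)$ is a finite-dimensional unitary $D_p^\times$-representation with central character, and Proposition~\ref{1-lalg} shows any such representation is locally algebraic as a $D_p^{\times,1}$-representation. This contradicts the maximality of $\Shat^1(\Pi)^{1-\alg}$. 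The key input you are missing is this structural fact about finite-dimensional unitary $D_p^\times$-representations, which guarantees that $M$ has \emph{no} nonzero subrepresentations of $\delta$-dimension $0$; once you have that, any irreducible sub (which exists by admissibility) must have $\delta$-dimension $1$.
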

 
 To the best of our knowledge the existence of irreducible admissible unitary
Banach space representations of $D_p^\times$ of dimension $1$ has not been known before. 
The locally analytic representations of $D_p^\times$ constructed  by Kisin--Strauch in \cite{kisin_strauch} should have the dimension equal to the dimension of the $D_p^{\times}$-orbit 
in $\PP^1(\QQ_{p^2})$, which is equal to $\dim D_p^{\times} - \dim \QQ_{p^2}^\times= 4-2=2$. 

 If $F=\QQ$, as we assume in this introduction, then, after twisting, Theorem \ref{thm_D}  follows readily from  
 Theorem \ref{thm_C} by using Emerton's results on locally algebraic vectors in completed 
 cohomology, \cite{interpolate}. However, when $F$ is a totally real field, then one has 
 to deal with $p$-adic automorphic representations, which have locally algebraic vectors at all places above $p$, except for one, where they have locally algebraic vectors after a twist by a character, which is not 
 locally algebraic.

  Let us point out that because $D_{p}^\times$ has an open normal pro-$p$ subgroup every smooth irreducible representation of $D_{p}^{\times}$ in characteristic $p$ is finite dimensional 
  as a vector space. Moreover, if we fix a central character there are only finitely many isomorphism 
  classes. It follows from Theorem \ref{thm_B} that  at least one 
  of $H^1_{\et}(\PP^1_{\Cp}, \FF_{\pi_1})$ and $H^1_{\et}(\PP^1_{\Cp}, \FF_{\pi_2})$ must be an infinite dimensional $k$-vector space and admissible as smooth representations of $D_{p}^{\times}$. This means that they 
 are built together out of finite dimensional pieces in some non-semisimple way. This non-semisimplicity makes the study of the mod-$p$ Jacquet--Langlands correspondence complicated and passing to semi-simplification seems not to carry much information, since if the central character is fixed only finitely many isomorphism classes of irreducible subquotients can appear. 
 
 If we work in characteristic zero, then this problem need not appear. One may show that any irreducible unitary Banach space representation of $D_p^\times$ on a finite dimensional vector
 space is of the form $\Sym^b L^2 \otimes \det^a \otimes \tau\otimes \eta\circ \Nrd$, where
 $\tau$ is a smooth irreducible representation and $\eta:\Qp^{\times}\rightarrow L^\times$ is a 
 unitary character. Such representations appear in the classical Jacquet--Langlands correspondence (up to a twist) and we speculate that if the Galois representation corresponding to $\Pi$ does not become potentially semi-stable after twisting by a character then such representations 
should not appear as subquotients of $\Shat^1(\Pi)$.  It seems very reasonable to us in view of 
Corollary \ref{Cor_A} and Theorem \ref{thm_D}  to expect that $\Shat^1(\Pi)$ is of finite length 
as a Banach space representation of $D_p^{\times}$. This raises a natural question, whether
one can construct the irreducible representations in Theorem \ref{thm_D} directly, and prove local--global compatibility for them.  We hope to pursue these questions in future work.

 \subsection{Patching} We have decided not to use patching in this paper, since this would add another level of technicalities. However, let us indicate, which parts of the paper can be improved upon if one chooses
 to use patching. 
 
 In the miracle flatness theorem of Gee--Newton, see Proposition \ref{use_GN} for the form we use it, one needs the commutative ring to be regular, so we cannot apply it to 
 $\TT(U^p)_{\mm}$ and $S(U^p, \OO)_\mm$ directly, however if one patches under favourable assumptions one may arrange that the patched ring $R_\infty$ is regular, and then show that the patched module $M_\infty$ is flat over $R_\infty$ by the same argument. This would imply that $S(U^p, \OO)_\mm$
 is flat over $\TT(U^p)_{\mm}$. This is how Gee--Newton prove their big $R$ equals to big $\TT$ theorem. As a consequence one would know that the multiplicity $n$ in \eqref{pups} is independent of $x$, and one would not need to assume that $\rho_x|_{G_{\Qp}}$ is irreducible. 
 Instead we use Cohen's  structure theorem for complete local rings to prove flatness of $S(U^p, \OO)_\mm$  over some (random) formally smooth subring of $\TT(U^p)_{\mm}$
 of the same dimension. As a consequence we are  forced to assume that $\rho_x|_{G_{\Qp}}$ is irreducible, since we cannot exclude that at points corresponding to reducible 
 Galois representations, the fibre contains only one of the two irreducible Banach space that should appear there. Since we can not exclude that one of $H^1_{\et}(\PP^1_{\Cp}, \FF_{\pi_1})$ or $H^1_{\et}(\PP^1_{\Cp}, \FF_{\pi_2})$ vanishes, this causes trouble. However, since we don't need the precise knowledge of $(S(U^p, \OO)_\mm \otimes_\OO L)[\mm_x]$, but only that all irreducible subquotients of $\Pi$ appear there,  maybe arguments of Breuil--Emerton \cite{be} might be used here to get the same result without patching. 
 
 A further improvement could be made in Theorem \ref{thm_D}. Right now we use in an essential way that 
 $\Pi$ corresponds to the restriction to $G_{\Qp}$ of a global Galois representation. One could spread this result out by first showing that the patched module $M_\infty$ is projective as a $\GL_2(\Qp)$-representation 
 in a suitable category, as we do in \cite{6auth2}, so that if $r:G_{\Qp}\rightarrow \GL_2(L)$ is an irreducible representation satisfying $\rbarss=\chi_1\oplus \chi_2$ then the corresponding 
 $\GL_2(\Qp)$-representation $\Pi$ can be obtained by specializing $M_{\infty}$ at some closed point $y$ of $R_{\infty}[1/p]$. If $\sigma$ is a finite dimensional $D_p^\times$-invariant subspace
 of $\Shat^1(\Pi)$ then it is of the form 
 $\Sym^b L^2 \otimes \det^a \otimes \tau\otimes \eta\circ \Nrd$.
 Let $\sigma^\circ$ be an $D_p^\times$-invariant $\OO$-lattice in $\sigma$. Then $M_{\infty}'(\sigma^\circ):=\Hom_{D_p^{\times}}( \Shat^1(M_\infty), (\sigma^0)^d)^d$ is a finitely generated $R_\infty$-module.
 Arguing as in \cite[Lem.\,4.18]{6auth2} and using Theorem \ref{thm_D} one sees that the Galois representations corresponding to the closed points in $\mSpec R_{\infty}[1/p]$ in the support $M_{\infty}'(\sigma^{\circ})$ have the same $p$-adic Hodge theoretic properties, so for example if $\eta$ is trivial, then 
 all of them are potentially semistable, have the same Hodge--Tate weights, and the same inertial type, moreover the restriction of the associated Weil--Deligne representation to the Weil group of $\Qp$ is of "discrete series type". As a part of patching we obtain an ideal $\mathfrak a_\infty$ of $R_{\infty}$-generated by an $M_{\infty}$-regular sequence $y_1, \ldots, y_h$ such that $M_\infty/ \mathfrak a_{\infty} M_{\infty}\cong S(U^p, \OO)_\mm$. Then $\Shat^1(M_{\infty})/ \mathfrak a_{\infty}\Shat^1(M_{\infty}) \cong \widehat{H}^1(U^p, \OO)_{\mm}$ and $\varpi, y_1, \ldots, y_h$ 
 is a system of parameters for $M_\infty'(\sigma^\circ)$. Thus $M_\infty'(\sigma^\circ)/\mathfrak a_{\infty} M_\infty'(\sigma^\circ) [1/p]$ is non-zero and we may convert this into a representation 
 theoretic information using \cite[Prop.\,2.22]{duke}, to deduce that there is some $x\in \mSpec \TT(U^p)[1/p]$, such that $\Hom_{D_p^\times}(\sigma, (\widehat{H}^1(U^p, \OO)_\mm\otimes_\OO L)[\mm_x])\neq 0$. Then as in the proof of Theorem \ref{main_2} the $p$-adic Hodge theoretic data of  $\rho_x|_{G_{\Qp}}$  (and hence of $r$) will determine $\sigma$. This allows us to conclude
 that if $\Hom_{D_p^\times}(\sigma, \Shat^1(\Pi))$ is non-zero then $\Shat^1(\Pi)^{\onealg}$ is isomorphic to 
 a finite direct sum of copies of $\sigma$ and thus is a finite dimensional $L$-vector space. Arguing as in the proof of Theorem \ref{main_2} we conclude that $\Shat^1(\Pi)/\Shat^1(\Pi)^{\onealg}$ 
is non-zero and any irreducible Banach space subrepresentation has $\delta$-dimension equal to $1$.  The details of this argument will appear in a subsequent  paper.
 \subsection{Outline}
 We end the introduction with a brief outline how the paper is structured.  Section \ref{local} contains 
 the local part of the paper. We first review the results of Ludwig and Scholze, we then recall some
 of results about the representation theory of $\GL_2(\Qp)$ proved in \cite{image}. In subsection 
 \ref{dualcat} in order to make things transparent for the reader we dualize everything at least twice.
 In section \ref{fandf} we recall the notion of dimension for finitely generated modules over Auslander
 regular rings and  the results  of Gee--Newton on miracle flatness in  non-commutative setting.
 In section \ref{lg} we prove the local-global compatibility results. The main objective of this section 
 is to be able to apply the results of \cite{image} to the completed cohomology. These results were known to the authors of \cite{6auth2} at the time of writing that paper. However, the results proved in section \ref{lg} form a technical backbone of the paper, so it seems like a good idea to write down the details. The main ingredients are the theory of capture \cite[\S 2.4]{CDP},
\cite[\S 2.1]{blocksp2}, which is based on the ideas of Emerton in \cite{emerton_lg}, the results of Berger--Breuil \cite{bb} on the universal unitary completions of locally algebraic principal series representations and the results of Colmez in \cite{colmez} on the compatibility of the $p$-adic and classical local Langlands correspondence. In section \ref{section_main} we prove the theorems stated in the introduction. 

\subsection{Acknowledgements} I thank Judith Ludwig for giving a nice talk at the Oberseminar in Essen and for subsequent discussions. I thank Peter Scholze for his comments during and after my talk in Bonn in October 2017. I have first presented the results in the Morning Side Center for Mathematics 
in Beijing in March 2017, and finished writing up the paper there during my visit in March 2018. I thank Yongquan Hu for the invitation and stimulating discussions, I thank the Morning Side Center for Mathematics for providing excellent working conditions. The comments of Yiwen Ding made during my talk at Peking University helped to improve the exposition of the paper. I thank Jan Kohlhaase for sharing his insights into dimension theory of Auslander regular rings with me and Shu Sasaki for discussions about Shimura curves. I thank  Konstantin Ardakov, Matthew Emerton and Toby Gee  for helpful correspondence. I thank Florian Herzig, Judith Ludwig and James Newton for their comments on an earlier draft. The author is partially supported by the DFG, SFB/TR45.

\section{Notation}\label{notation} Our conventions on local class field theory, classical local Langlands correspondence and $p$-adic Hodge theory agree with those of \cite{6auth2}. In particular, 
uniformisers correspond to geometric Frobenii and the $p$-adic cyclotomic character $\varepsilon: G_{\Qp}\rightarrow \Zp^\times$ has Hodge--Tate weight $-1$. We will denote its 
reduction modulo $p$ by $\omega$. We will denote by $\chi_{\cyc}: G_{\QQ}\rightarrow \Zp^\times$  the global $p$-adic cyclotomic character. 

If $G$ is a $p$-adic analytic group then we employ the notation scheme introduced in \cite{ord1} for 
the categories of its representations. In particular, $\Mod^{\sm}_G(\OO)$ denotes the category of smooth representations of $G$ on $\OO$-torsion modules, $\Mod^{\adm}_G(\OO)$ denotes the full subcategory of admissible representations and $\Mod^{\ladm}_G(\OO)$ denotes the full 
subcategory of locally admissible representations of $G$.

 If $\zeta: Z(G)\rightarrow \OO^{\times}$ is a continuous character of the centre of $G$ then we add a subscript $\zeta$ to indicate that we consider only those representations on 
which $Z(G)$ acts by the central character $\zeta$. For example, $\Mod^{\ladm}_{G, \zeta}(\OO)$ is the full subcategory of $\Mod^{\sm}_G(\OO)$ consisting of locally admissible 
representations on which $Z(G)$ acts by the character $\zeta$. 

The functor $M\mapsto M^{\vee}:=\Hom^{\cont}_{\OO}(M, L/\OO)$ induces an anti-equivalence
of categories between the category of discrete $\OO$-modules and the category of compact 
$\OO$-modules. We will refer to this duality as Pontryagin duality and to $M^{\vee}$ as the Pontryagin dual of $M$. 

Pontryagin duality induces an anti-equivalence of categories between $\Mod^{\sm}_G(\OO)$ and a category, which Emerton calls profinite augmented $G$-representations over $\OO$, 
see \cite[Def.\,2.1.6]{ord1}, and we will denote by $\Mod^{\pro}_G(\OO)$. We will denote the 
full subcategory of $\Mod^\pro_G(\OO)$ anti-equivalent to $\Mod^{\sm}_{G, \zeta}(\OO)$ by Pontryagin duality by 
$\Mod^{\pro}_{G, \zeta}(\OO)$. In particular, if $G$ is compact then $\Mod^{\pro}_{G, \zeta}(\OO)$ is the category of compact $\OO[[G]]$-mo\-du\-les, on which the centre acts by $\zeta^{-1}$, 
where $\OO[[G]]$ is the completed group algebra of $G$. 

We would like to review Schikhof duality, which we learned from \cite{iw}. Let $\Pi$ be an $L$-Banach space and let $\Theta$ be an open bounded lattice in $\Pi$.  Its Schikhof dual $\Theta^d$ is defined as $\Hom_{\OO}^{\cont}(\Theta, \OO)$ equipped with the weak topology (or the topology of point-wise convergence). It is $\OO$-torsion free and 
compact, see the proof of \cite[Thm.\,1.2]{iw}. We also let  $\Pi^d:=\Hom_L^{\cont}(\Pi, L)$  equipped with the weak topology. It follows from \cite[Prop.\,3.1]{nfa} that $\Pi^d \cong \Theta^d\otimes_{\OO} L$. 

 Conversely, if $M$  is a linearly compact, torsion free $\OO$-module then 
 $\Hom^{\cont}_{\OO}(M, L)$, equipped with a supremum norm 
is an $L$-Banach space with the unit ball $M^d:= \Hom^{\cont}_{\OO}(M, \OO)$. 
Note that $M^d$ is complete for the $p$-adic topology. Since $M$ is $\OO$-torsion free, it is projective in the category 
of linearly compact $\OO$-modules, see \cite[Rem.\,1.1]{iw}. From this one obtains an isomorphism 
\begin{equation}\label{Mdpn}
M^d/\varpi^n \cong (M/\varpi^n)^{\vee},
\end{equation} 
see the proof of   \cite[Lem.\,5.4]{comp}. We thus obtain a homeomorphism 
\begin{equation}\label{Md}
M^d \cong \varprojlim_n M^d/\varpi^n \cong \varprojlim_n (M/\varpi^n)^{\vee}.
\end{equation}
It follows from \cite[Rem.\,10.2]{nfa} applied to $\Pi$ and the gauge of $\Theta$ that $\Theta/\varpi^n$ 
is a free $\OO/\varpi^n$-module with basis indexed by a choice of $k$-vector space basis of $\Theta/\varpi$. 
Thus $\Hom_{\OO}(\Theta/\varpi^n, \OO/\varpi^n)\rightarrow \Hom_{\OO}(\Theta/\varpi^n, \OO/\varpi^m)$ is surjective 
for $n\ge m$. By passing to the limit we deduce that $\Hom_{\OO}^{\cont}(\Theta, \OO)\rightarrow \Hom_{\OO}^{\cont}(\Theta, 
\OO/\varpi^n)$ is surjective and induces a homeomorphism 
\begin{equation}\label{Thetadpn}
\Theta^d/\varpi^n \cong (\Theta/\varpi^n)^{\vee}.
\end{equation}
 It follows from \eqref{Md} applied to $M=\Theta^d$ and \eqref{Thetadpn} that
\begin{equation}\label{Thetad}
(\Theta^d)^d\cong \varprojlim_n (\Theta^d/\varpi^n)^{\vee}\cong\varprojlim_n ((\Theta/\varpi^n)^{\vee})^{\vee}\cong 
\varprojlim_n \Theta/\varpi^n \cong \Theta.
\end{equation}
If $\Theta=M^d$ then \eqref{Thetadpn} and \eqref{Mdpn} give
\begin{equation} 
\Theta^d/\varpi^n\cong (M^d/\varpi^n)^{\vee}\cong ((M/\varpi^n)^{\vee})^{\vee}\cong M/\varpi^n.
\end{equation}
Since $\Theta^d$ and $M$  are compact $\OO$-modules we obtain a homeomorphism
\begin{equation}
(M^d)^d \cong \varprojlim_n \Theta^d/\varpi^n \cong \varprojlim_n M/\varpi^n \cong M.
\end{equation}

We are most interested in the situation, when $\Pi$ is a unitary $L$-Banach space representation of 
a $p$-adic analytic group $G$. In this case $\Theta^d$ is a topological $\OO[[K]]$-module, where $K$ 
is a compact open subgroup of $G$, see \cite[Sect.\,2]{iw}. We say that $\Pi$ is \textit{admissible}
if $\Theta^d$ is finitely generated as $\OO[[K]]$-module. The functor $\Pi \mapsto \Pi^d$ induces 
an anti-equivalence of categories between the category of admissible unitary $L$-Banach space representations of $K$ and
the category of finitely generated $\OO[[K]]\otimes_{\OO} L$-modules, \cite[Thm.\,3.5]{iw}.

Colmez in \cite[\S IV]{colmez} has defined an exact covariant  functor $\VV$ from the category of 
finite length smooth representations of $G:=\GL_2(\Qp)$ with a central character
on $\OO$-torsion modules to the category of smooth finite length representations of $G_{\Qp}$ on $\OO$-torsion modules. We modify this functor as follows. Let 
$\dualcat_{\zeta}(\OO)$ be the full subcategory of $\Mod^{\pro}_G(\OO)$ anti-equivalent to 
$\Mod^{\ladm}_{G, \zeta}(\OO)$ by the Pontryagin duality. If $M$ is in $\dualcat_{\zeta}(\OO)$
then we may write $M=\varprojlim_n M_n$, where the projective limit is taken over all quotients of 
finite length. We then define $\cV(M):= \varprojlim_n \VV(M_n^{\vee})^{\vee} (\zeta)$, where 
we view $\zeta$ as a character of $G_{\Qp}$ via local class field theory. This normalisation
differs from \cite{image} by a twist of cyclotomic character and coincides with the normalisation 
of \cite{6auth2}. In particular, if $\pi$ is a principal series representation $\Ind_B^G \omega \chi_1\otimes \chi_2$ then $\cV(\pi^{\vee})=\chi_1$ viewed as a character of $G_{\Qp}$ via the local 
class field theory. 

Let $\Pi$ be an admissible unitary $L$-Banach space representation of $G$ with central character 
$\zeta$ and let  $\Theta$ be an open bounded $G$-invariant lattice  in $\Pi$.  It is easy to see that $\Theta^d$ is an object of $\dualcat_\zeta(\OO)$, see 
\cite[Lem.\,4.11]{image}. Thus we may apply the functor $\cV$ to $\Theta^d$ to obtain a continuous 
$G_{\Qp}$-representation on a compact $\OO$-module. We define 
$\cV(\Pi):=\cV(\Theta^d)\otimes_{\OO} L$. The definition of $\cV(\Pi)$ does not depend on the choice of $\Theta$, since any two are commensurable. The functor $\Pi \mapsto \cV(\Pi)$ is contravariant. If $\Pi$ is absolutely irreducible and  occurs as a subquotient of 
a unitary parabolic induction of a unitary character, then we say that $\Pi$ is \textit{ordinary}. 
Otherwise, we say that $\Pi$ is \textit{non-ordinary}. In this case it is shown in \cite{image}, \cite{CDP} that $\cV(\Pi)$ is a $2$-dimensional representation of $G_{\Qp}$ and (taking into account our normalisations) $\det \cV(\Pi)= \zeta \varepsilon^{-1}$. A deep theorem of Colmez proved in \cite{colmez} relates the existence of locally algebraic vectors in $\Pi$ to the property of $\cV(\Pi)$
being potentially semi-stable with distinct Hodge--Tate weights. With our conventions Colmez's result says that $\Hom_U( \det^a \otimes \Sym^b L^2, \Pi)\neq 0$ for some open subgroup $U$ of $\GL_2(\Zp)$
if and only if $\cV(\Pi)$ is potentially semi-stable with Hodge--Tate weights  $(1-a, -a-b)$. 

If $A$ is a commutative ring then we denote by $\mSpec A$ the set of its maximal ideals. If
$x\in \mSpec A$ then $\kappa(x)$ will denote its residue field. We will typically be considering $\mSpec A$ when 
$A=R[1/p]$, where $R$ is a complete local noetherian $\OO$-algebra with residue field $k$. In this 
case $\kappa(x)$ is a finite extension of $L$.

\section{Local part}\label{local}
 Let $F$ be a finite extension of $\Qp$. We fix an algebraic closure $\Fbar$ of $F$ and let $G_F=\Gal(\Fbar/ F)$. Let $\Cp$ be the completion of $\Fbar$ and 
let $\Fbreve$ be the completion of the maximal unramified extension of $F$ in $\Fbar$. Let $G=\GL_n(F)$ and let $D/F$ be a central division algebra over $F$ with invariant $1/n$. 

\subsection{Results of Ludwig and Scholze}\label{ls}
We continue to denote by $L$ a (sufficiently large) finite extension of $F$ with the ring of integers $\OO$, uniformiser $\varpi$ and residue field $k$. To a smooth representation $\pi$ of $G$ on an 
$\OO$-torsion  module, Scholze associates a Weil-equivariant sheaf $\mathcal F_{\pi}$ on the \'etale site of the adic space $\PP^{n-1}_{\Fbreve}$, see \cite[Prop. 3.1]{scholze}. If $\pi$ is admissible 
then he shows that for any $i\ge 0$ the \'etale cohomology groups $H^i_\et(\PP^{n-1}_{\Cp}, \FF_{\pi})$ carry a continuous $D^{\times}\times G_F$-action, which make them into 
 smooth admissible representations of $D^{\times}$.  Moreover, they vanish for $i> 2(n-1)$, see \cite[Thm. 3.2, 4.4]{scholze}. His construction is expected to realize both the $p$-adic Jacquet-Langlands and the $p$-adic local Langlands correspondences. The trouble is that these groups seem to be impossible to calculate in most cases. It is known that the natural map 
 \begin{equation}\label{H0} 
 H^0_\et(\PP^{n-1}_{\Cp}, \FF_{\pi^{\SL_n(F)}})\hookrightarrow H^0_\et(\PP^{n-1}_{\Cp}, \FF_{\pi})
 \end{equation}
 is an isomorphism, see \cite[Prop.\,4.7]{scholze}. In particular,  if $\pi^{\SL_n(F)}=0$ then $H^0$ vanishes. 

We want to extend the results of Scholze to the category of locally admissible representations. Recall  that a smooth representation of $G$ or $D^{\times}$ on an $\OO$-torsion module is \textit{locally admissible} if it is equal to the union of its admissible subrepresentations. In this
case we may write  $\pi=\varinjlim \pi'$, where the limit is taken over all admissible subrepresentations of $\pi$. The proof of  \cite[Prop.3.1]{scholze} shows that the natural map 
$$ \varinjlim \FF_{\pi'} \rightarrow \FF_{\pi},$$
is an isomorphism, since it induces an isomorphism on stalks at geometric points. In our setting cohomology commutes with direct limits. For all $i\ge 0$ we have isomorphisms
\begin{equation}\label{limits}
\begin{split}
H^i_\et(\PP^{n-1}_{\Cp}, \FF_{\pi})&\cong \varinjlim_K  H^i( (\PP^{n-1}_{\Cp}/K)_{\et}, \FF_{\pi})\\& \cong \varinjlim_K \varinjlim_{\pi'} H^i( (\PP^{n-1}_{\Cp}/K)_{\et}, \FF_{\pi'})\\& \cong \varinjlim_{\pi'} H^i_\et(\PP^{n-1}_{\Cp}, \FF_{\pi'}),
\end{split}
\end{equation}
where the first isomorphism follows from \cite[Prop.2.8]{scholze} and the limit is taken over all compact open subgroups of $D^{\times}$, the second isomorphism follows from the fact that the site $(\PP^{n-1}_{\Cp}/K)_{\et}$ is coherent, see \cite[Lem.\,2.7(v)]{scholze}, so cohomology commutes with filtered direct limits \cite[Exp.VI, Cor.5.2]{sga4}, the last isomorphism is \cite[Prop.2.8]{scholze} again. Since quotients of admissible representations of $p$-adic analytic groups are admissible, from  \eqref{limits} and Scholze's results 
we deduce that $H^i_\et(\PP^{n-1}_{\Cp}, \FF_{\pi})$ is a locally admissible representation of $D^{\times}$, which vanishes for $i> 2(n-1)$. 

If $n=2$ then the cohomology vanishes for $i>2$. If we additionally assume that $F=\Qp$ and $\pi$ is a principal series representation Ludwig has shown in \cite[Thm.4.6]{ludwig} that $H^2_\et(\PP^{1}_{\Cp}, \FF_{\pi})$ vanishes. Thus we get the following:

\begin{cor}\label{cor_ludwig} If $\pi$ is a locally admissible representation of $\GL_2(\Qp)$, such that all its irreducible subquotients are principal series, then 
$$H^i_{\et}(\PP^{1}_{\Cp}, \FF_{\pi})=0, \quad \forall i\neq 1.$$
\end{cor}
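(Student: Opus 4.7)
The plan is to leverage the heavy lifting already done: Ludwig's vanishing theorem for individual principal series in degree $2$, Scholze's exactness of $\pi \mapsto \FF_\pi$, Scholze's dimension bound, and the identification \eqref{H0}, together with the direct-limit compatibility \eqref{limits}. The whole argument is essentially a formal d\'evissage.

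First I would reduce to the case when $\pi$ is admissible. Writing $\pi = \varinjlim_{\pi'} \pi'$ with $\pi'$ admissible, every irreducible subquotient of $\pi'$ is also an irreducible subquotient of $\pi$, so the hypothesis is preserved. By \eqref{limits} the cohomology commutes with the direct limit, so vanishing for admissible $\pi$ implies vanishing in general. A further filtration of an admissible $\pi$ by its finitely generated, and hence finite-length, admissible subrepresentations (again using \eqref{limits}) reduces us to finite-length $\pi$, on which I can now do induction on length.

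For the vanishing of $H^i$ with $i \geq 2$, the case $i > 2$ is in the paragraph preceding the corollary (from Scholze's bound $i > 2(n-1) = 2$ applied in the locally admissible setting). For $i = 2$, Scholze's exactness of $\pi \mapsto \FF_\pi$ produces a long exact sequence from any short exact sequence $0 \to \pi_1 \to \pi \to \pi_2 \to 0$, and since $H^3_\et(\PP^1_{\Cp}, \FF_{\pi_1}) = 0$, the middle term vanishes whenever the outer ones do. Induction on length reduces the claim to irreducible $\pi$, where it is precisely Ludwig's theorem \cite[Thm.\,4.6]{ludwig}. For $i = 0$, by \eqref{H0} it suffices to show $\pi^{\SL_2(\Qp)} = 0$. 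The invariants functor is left exact and commutes with filtered colimits, so the same d\'evissage reduces us to an irreducible principal series $\pi = \Ind_B^G \chi_1 \otimes \chi_2$ with $\chi_1 \neq \chi_2$ (otherwise $\pi$ would not be irreducible principal series). Frobenius reciprocity, together with the fact that the restriction of $\chi_1 \otimes \chi_2$ to the torus of $\SL_2(\Qp)$ is the nontrivial character $\chi_1\chi_2^{-1}$, gives $\Hom_{\SL_2(\Qp)}(\triv, \pi) = 0$.

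There is no real obstacle here; the only points requiring mild care are bookkeeping. One has to interpret "principal series" in the statement as \emph{irreducible} principal series, so that characters $\chi \circ \det$ and twists of the Steinberg representation (which have nonzero $\SL_2(\Qp)$-invariants, resp.\ are not covered by Ludwig's statement for $H^2$) are excluded as possible subquotients. One also needs that admissible representations in our setting are exhausted by their finite-length subrepresentations, which is standard. With those in place the d\'evissage is routine and the corollary follows.
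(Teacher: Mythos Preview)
Your proof is correct and follows the same approach as the paper, which states the corollary as an immediate consequence of the preceding discussion without giving a formal proof. You have simply made explicit the d\'evissage (via \eqref{limits}, \eqref{H0}, Scholze's bound, and Ludwig's theorem) that the paper leaves implicit; the one ingredient you flag as ``standard''---that locally admissible representations of $\GL_2(\Qp)$ are unions of finite-length subrepresentations---is exactly what the paper invokes later (citing \cite[Thm.\,2.3.8]{ord1}) in the proof of Proposition~\ref{enough}.
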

\subsection{Representation theory of $\GL_2(\Qp)$}
From now on we assume that $n=2$ and $F=\Qp$ so that $G=\GL_2(\Qp)$. Let us recall some representation theory of $G$. The category $\Mod^\ladm_G(\OO)$ of smooth locally admissible $G$-representations on $\OO$-torsion modules decomposes into 
a direct sum of indecomposable subcategories called blocks. Blocks containing an absolutely irreducible $k$-representation of $G$ correspond to semi-simple representations $\rbarss: G_{\Qp}\rightarrow \GL_2(k)$, such that  $\rbarss$ is either absolutely irreducible or a direct sum of $1$-dimensional representations. 

Let us describe these blocks explicitly. We denote by $[\pi]$ the isomorphism class of a representation 
$\pi$ of $G$. If $\rbarss$ is absolutely irreducible then let $\BB_{\rbarss}=\{[\pi]\}$, where $\pi$ is the supersingular representation of $G$ corresponding to $\rbarss$ under the semi-simple mod $p$ local Langlands correspondence, \cite{breuil1}. If $\rbarss= \chi_1\oplus\chi_2$ then we consider $\chi_1$ and $\chi_2$ as characters of $\Qp^{\times}$ via the local class field theory, and let 
\begin{equation}\label{def_pi}
\pi_1:=\Ind_B^G \chi_1\omega\otimes \chi_2, \quad \pi_2:=\Ind_B^G \chi_2\omega \otimes \chi_1,
\end{equation}
where we let $\varepsilon: \Qp^{\times}\rightarrow L^\times$ be the character, which 
maps $x$ to $x|x|$, and let $\omega: \Qp^{\times}\rightarrow k^{\times}$ be its reduction modulo $\varpi$. 
We then let $\BB_{\rbarss}$ be the set of isomorphism classes of irreducible subquotients of $\pi_1$ and $\pi_2$. It can have from one up to four elements depending on $\chi_1 \chi_2^{-1}$. 

Let $\Mod^{\ladm}_G(\OO)_{\rbarss}$ be the full subcategory of the category of locally admissible representations $\Mod^{\ladm}_G(\OO)$ of $G$, 
such that $\pi$ is in $\Mod^{\ladm}_G(\OO)_{\rbarss}$ if and only if the isomorphism classes of all the irreducible subquotients of $\pi$ lie in $\BB_{\rbarss}$. It follows from the $\Ext^1$-calculations in \cite[Cor.\,1.2]{durham} and 
\cite[Prop.\,5.34]{image} that $\Mod^{\ladm}_G(\OO)_{\rbarss}$ is a direct summand of the category $\Mod^{\ladm}_G(\OO)$, in the sense that every $\pi\in \Mod^{\ladm}_G(\OO)$ can be written uniquely as 
\begin{equation}\label{decompose_can}
 \pi=\pi_{\rbarss} \oplus \pi^{\rbarss},
 \end{equation}
where $\pi_{\rbarss}$ is the maximal $G$-invariant subspace of $\pi$, such that the isomorphism classes of all 
its irreducible subquotients lie in $\BB_{\rbarss}$, and $\pi^{\rbarss}$ is the maximal $G$-invariant subspace of 
$\pi$ such that  none of its irreducible subquotients lie in $\BB_{\rbarss}$.

In this paper we are especially interested in the case, when $\rbarss=\chi_1\oplus \chi_2$ and 
$\chi_1\chi_2^{-1}\neq \omega^{\pm 1}$. In this case both representations $\pi_1$ and $\pi_2$ in \eqref{def_pi} are irreducible principal series representations and  so every $\pi\in \Mod^\ladm_G(\OO)_{\rbarss}$ satisfies the hypothesis of Corollary \ref{cor_ludwig}. Let $\Mod^\ladm_{G_{\Qp} \times D^\times}(\OO)$ be the category of locally admissible representations 
of $D^\times$ on $\OO$-torsion modules with a continuous commuting $G_{\Qp}$-action.
We immediately deduce 

\begin{cor}\label{ludwig_cor} If $\rbarss=\chi_1\oplus \chi_2$ and 
$\chi_1\chi_2^{-1}\neq \omega^{\pm 1}$ then the functor 
$$\Sc^1: \Mod^\ladm_G(\OO)_{\rbarss} \longrightarrow \Mod^\ladm_{G_{\Qp} \times D^\times}(\OO), \quad
 \pi\mapsto H^1_{\et}(\PP^{1}_{\Cp}, \FF_{\pi}).$$
is exact and covariant.
\end{cor}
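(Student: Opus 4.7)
The plan is to combine the exactness of Scholze's sheaf construction with the full vanishing statement of Corollary \ref{cor_ludwig}. Covariance of $\Sc^1$ is immediate from the functoriality of $\pi\mapsto\FF_{\pi}$ (Scholze, \cite[Prop.3.1]{scholze}) and of \'etale cohomology, so the content is exactness.

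First I would verify that $\BB_{\rbarss}$ consists entirely of irreducible principal series under the hypothesis $\chi_1\chi_2^{-1}\neq \omega^{\pm 1}$. By the definition of $\pi_1,\pi_2$ in \eqref{def_pi} and the standard irreducibility criterion for parabolic inductions of $\GL_2(\Qp)$, both $\pi_1$ and $\pi_2$ are irreducible. Hence $\BB_{\rbarss}=\{[\pi_1],[\pi_2]\}$ and every $\pi\in\Mod^\ladm_G(\OO)_{\rbarss}$ has all of its irreducible subquotients of principal series type, so the hypothesis of Corollary \ref{cor_ludwig} is satisfied. Consequently
\[
H^0_{\et}(\PP^1_{\Cp},\FF_{\pi})=H^2_{\et}(\PP^1_{\Cp},\FF_{\pi})=0
\]
for every such $\pi$.

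Next I would exploit that the assignment $\pi\mapsto \FF_{\pi}$ is exact: this is built into Scholze's construction at the level of stalks (see \cite[Prop.3.1]{scholze}), and it passes to locally admissible coefficients by the direct limit identification \eqref{limits}. Given a short exact sequence
\[
0\to \pi'\to \pi\to \pi''\to 0
\]
in $\Mod^\ladm_G(\OO)_{\rbarss}$, one obtains a short exact sequence $0\to\FF_{\pi'}\to\FF_\pi\to\FF_{\pi''}\to 0$ of sheaves on the \'etale site of $\PP^1_{\Cp}$, and hence a long exact cohomology sequence.

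Finally I would read off the conclusion: all three $\pi',\pi,\pi''$ belong to $\Mod^\ladm_G(\OO)_{\rbarss}$, so by the first step their $H^0$ and $H^2$ both vanish. The long exact sequence therefore collapses to the short exact sequence
\[
0\to H^1_{\et}(\PP^1_{\Cp},\FF_{\pi'})\to H^1_{\et}(\PP^1_{\Cp},\FF_\pi)\to H^1_{\et}(\PP^1_{\Cp},\FF_{\pi''})\to 0,
\]
which is exactness of $\Sc^1$. There is no real obstacle here; everything nontrivial has been done in Ludwig's vanishing theorem and in Scholze's construction, and this corollary is just the packaging that turns those inputs into exactness of the middle cohomology functor on the relevant block.
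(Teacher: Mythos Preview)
Your argument is correct and matches the paper's intended reasoning: the paper states this corollary with ``We immediately deduce'' right after observing that the hypothesis $\chi_1\chi_2^{-1}\neq\omega^{\pm1}$ makes $\pi_1,\pi_2$ irreducible principal series, so that Corollary~\ref{cor_ludwig} applies to every object of the block. The exactness then follows from the vanishing of $H^0$ and $H^2$ and the long exact sequence, exactly as you wrote.
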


\subsection{Dual category and Banach space representations}\label{dualcat}
Let $\dualcat_G(\OO)$ be the category anti-equivalent to $\Mod^{\ladm}_G(\OO)$
via Pontryagin duality and let $\dualcat_{G_{\Qp} \times D^\times}(\OO)$ be the category anti-equivalent to $\Mod^\ladm_{G_{\Qp} \times D^\times}(\OO)$ via the Pontryagin duality. We define a covariant homological $\delta$-functor $\{ \Shat^i\}_{i\ge 0}$ by 
$$\Shat^i: \dualcat_G(\OO)\rightarrow \dualcat_{G_{\Qp} \times D^\times}(\OO), \quad
 M\mapsto H^i_{\et}(\PP^{1}_{\Cp}, \FF_{M^{\vee}})^{\vee}$$
where $M^\vee=\Hom^{\cont}_{\OO}(M, L/\OO)$ denotes the Pontryagin dual of $M$. Note that 
$(M^{\vee})^{\vee}\cong M$.  We introduce these dual categories, because it is much more convenient to work with compact torsion-free $\OO$-modules than with discrete divisible $\OO$-modules.

We denote the category of unitary admissible $L$-Banach space representations 
of $G$ by $\Ban^{\adm}_G(L)$. If $\Pi$ is in $\Ban^{\adm}_G(L)$ and $\Theta$ is an open bounded $G$-invariant lattice in $\Pi$ then it follows from \cite[Lemma 4.4]{image}  that the Schikhof dual 
$$\Theta^d:=\Hom^{\cont}_{\OO}(\Theta, \OO)$$
equipped with the weak topology is an object of $\dualcat_G(\OO)$. We refer the reader to 
Section \ref{notation} for properties of the Schikhof dual. We thus may apply the functors $\Shat^i$ to it to obtain a compact $\OO$-module $\Shat^i(\Theta^d)$ with a continuous  $G_{\Qp} \times D^\times$-action. We denote the Schikhof dual of this module by $\Shat^i(\Theta^d)^d$ and equip it with the $p$-adic topology. 

\begin{lem}\label{trouble} There is an exact sequence of topological $\OO$-modules
$$0\rightarrow \varprojlim_m (H^{i-1}_\et(\PP^{1}_{\Cp}, \FF_{\Theta\otimes_\OO L/\OO})/\varpi^m)\rightarrow \varprojlim_m H^i_{\et}(\PP^{1}_{\Cp}, \FF_{\Theta/\varpi^m})\rightarrow \Shat^i(\Theta^d)^d\rightarrow 0,$$
which identifies $\Shat^i(\Theta^d)^d$ with the maximal Hausdorff $\OO$-torsion free quotient of 
$\varprojlim_m H^i_{\et}(\PP^{1}_{\Cp}, \FF_{\Theta/\varpi^m})$.
\end{lem}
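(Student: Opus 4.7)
The plan is to derive the sequence from a sheaf-level short exact sequence, pass to the inverse limit, and identify the rightmost term via Schikhof--Pontryagin duality; the second assertion will then follow from a $\varpi$-adic divisibility argument.

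First I would apply the exact functor $\FF_{(-)}$ to the short exact sequence of smooth $G$-representations
\[
0 \to \Theta/\varpi^m \to \Theta \otimes_\OO L/\OO \xrightarrow{\varpi^m} \Theta \otimes_\OO L/\OO \to 0,
\]
whose first map is $x \mapsto \varpi^{-m}x$. Writing $H^i := H^i_\et(\PP^1_{\Cp}, \FF_{\Theta \otimes_\OO L/\OO})$, the associated long exact sequence in \'etale cohomology breaks into short exact sequences
\[
0 \to H^{i-1}/\varpi^m \to H^i_\et(\PP^1_{\Cp}, \FF_{\Theta/\varpi^m}) \to H^i[\varpi^m] \to 0
\]
for every $m \ge 1$.

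Second, I would pass to the inverse limit over $m$. Comparing the sheaf-level SES for consecutive $m$, with vertical maps consisting of the natural quotient on the left copy, multiplication by $\varpi$ on the middle copy, and the identity on the right copy, a direct check shows that the induced transition maps on the derived SES are the natural projections $H^{i-1}/\varpi^{m+1} \twoheadrightarrow H^{i-1}/\varpi^m$ on the kernel and multiplication by $\varpi$ on $H^i[\varpi^{m+1}] \to H^i[\varpi^m]$. Surjectivity of the former gives the Mittag--Leffler condition, so the derived inverse limit vanishes and we obtain
\[
0 \to \varprojlim_m (H^{i-1}/\varpi^m) \to \varprojlim_m H^i_\et(\PP^1_{\Cp}, \FF_{\Theta/\varpi^m}) \to T_\varpi H^i \to 0,
\]
where $T_\varpi H^i := \varprojlim_m H^i[\varpi^m]$ under the transition $y \mapsto \varpi y$.

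To identify $T_\varpi H^i$ with $\Shat^i(\Theta^d)^d$, I would apply $\Hom^\cont_\OO(\Theta^d, -)$ to $0 \to \OO \to L \to L/\OO \to 0$ and use Schikhof reflexivity to obtain $(\Theta^d)^\vee \cong \Theta \otimes_\OO L/\OO$, whence $\Shat^i(\Theta^d) = (H^i)^\vee$. Then
\[
\Shat^i(\Theta^d)^d = \Hom^\cont_\OO((H^i)^\vee, \OO) = \varprojlim_m \Hom_\OO((H^i)^\vee, \OO/\varpi^m) = \varprojlim_m H^i[\varpi^m] = T_\varpi H^i,
\]
using the standard Pontryagin-dual identification $\Hom_\OO(A^\vee, \OO/\varpi^m) \cong A[\varpi^m]$ valid for any discrete $\OO$-torsion module $A$.

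Finally, for the maximality claim, $T_\varpi H^i$ is $\OO$-torsion free (if $(x_m)$ satisfies $\varpi(x_m)=0$, then from $x_{m-1}=\varpi x_m = 0$ the whole sequence vanishes) and $p$-adically complete, hence Hausdorff. The main obstacle is to show that the kernel $K := \varprojlim_m H^{i-1}/\varpi^m$ is contained in every closed submodule $N \subseteq M := \varprojlim_m H^i_\et(\PP^1_{\Cp}, \FF_{\Theta/\varpi^m})$ for which $M/N$ is Hausdorff and $\OO$-torsion free. Since any such $M/N$ is $\varpi$-adically separated, it suffices to show that every $y \in K$ is infinitely $\varpi$-divisible in $M$; this would be established by exploiting the compatibility of the connecting homomorphisms $\delta_m \colon H^{i-1}/\varpi^m \to H^i_\et(\PP^1_{\Cp}, \FF_{\Theta/\varpi^m})$ under the transition maps $\Theta/\varpi^{m+n} \to \Theta/\varpi^m$ to produce, for each $n$, a preimage $z^{(n)} \in M$ with $\varpi^n z^{(n)} = y$.
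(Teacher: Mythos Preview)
Your derivation of the short exact sequence and the identification of the cokernel with $\Shat^i(\Theta^d)^d \cong T_\varpi H^i$ are correct and follow the paper's argument essentially verbatim.

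The gap is in the maximality assertion. Your reduction to showing that every $y\in K=\varprojlim_m H^{i-1}/\varpi^m$ is infinitely $\varpi$-divisible in $M$ cannot be completed, because that statement is false in general. Algebraically, take $H^{i-1}\cong\OO/\varpi$ and $H^i=0$: then each $H^{i-1}/\varpi^m=\OO/\varpi$ with identity transition maps, so $K=M=\OO/\varpi$, and no nonzero element of $K$ lies in $\varpi M=0$. The lemma is still true here (the maximal Hausdorff $\OO$-torsion-free quotient of $\OO/\varpi$ is indeed $0=T_\varpi H^i$), but not via divisibility. The compatibility of the $\delta_m$ under $\Theta/\varpi^{m+n}\to\Theta/\varpi^m$ that you invoke only gives back the defining compatibility of the system $(y_m)$; it does not manufacture a $\varpi^n$-th root.

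The paper's argument for this step is different and short: $K$ is the $\varpi$-adic completion of $H^{i-1}$, and since $H^{i-1}$ is $\OO$-torsion, the image of $H^{i-1}$ in $K$ is a dense $\OO$-torsion submodule. Any continuous $\OO$-linear map from $K$ to a Hausdorff $\OO$-torsion-free module must annihilate this dense torsion part, hence is identically zero. Applied to the composite $K\hookrightarrow M\to M/N$ for any closed $N$ with Hausdorff torsion-free quotient, this forces $K\subseteq N$, so every such quotient factors through $M/K=\Shat^i(\Theta^d)^d$.
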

\begin{proof}  Since 
\begin{equation}\label{modpin}
\Theta^d/\varpi^n \Theta^d\cong (\Theta/\varpi^n\Theta)^{\vee}, \quad \forall n\ge 1,
\end{equation}
 we have natural isomorphism $(\Theta^d)^\vee\cong \Theta\otimes_\OO L/\OO$ and thus
 \begin{equation}\label{covid-19}
  \Shat^i(\Theta^d)\cong H^i_\et(\PP^{1}_{\Cp}, \FF_{\Theta\otimes_\OO L/\OO})^{\vee}\cong (\varinjlim_n 
 H^i_\et(\PP^{1}_{\Cp}, \FF_{\Theta/\varpi^n}))^{\vee}.
 \end{equation}
Hence
\begin{multline*}
 \Shat^i(\Theta^d)^d\cong \varprojlim_m \Hom^\cont_{\OO}(\Shat^i(\Theta^d), \OO/\varpi^m)\cong 
\varprojlim_m (\Shat^i(\Theta^d)/\varpi^m)^\vee\\
\cong \varprojlim_m ((H^i_\et(\PP^{1}_{\Cp}, \FF_{\Theta\otimes_\OO L/\OO})[\varpi^m])^{\vee})^{\vee})\cong \varprojlim_m (H^i_\et(\PP^{1}_{\Cp}, \FF_{\Theta\otimes_\OO L/\OO})[\varpi^m]),
\end{multline*}
where the transition maps in the projective system are induced by multiplication by $\varpi$. 
The short exact sequence $0\rightarrow \Theta/\varpi^m \rightarrow \Theta\otimes_\OO L/\OO
\overset{\varpi^m}{\rightarrow }
\Theta\otimes_\OO L/\OO \rightarrow 0$ yields an exact sequence: 
$$ 0\rightarrow H^{i-1}_\et(\PP^{1}_{\Cp}, \FF_{\Theta\otimes_\OO L/\OO})/\varpi^m\rightarrow H^i_{\et}(\PP^{1}_{\Cp}, \FF_{\Theta/\varpi^m}) \rightarrow 
H^{i}_\et(\PP^{1}_{\Cp}, \FF_{\Theta\otimes_\OO L/\OO})[\varpi^m]\rightarrow 0.$$
The exact sequence is obtained  by passing to the limit by noting that the system satisfies the 
Mittag-Leffler condition.

If we let $M:=H^{i-1}_\et(\PP^{1}_{\Cp}, \FF_{\Theta\otimes_\OO L/\OO})$, $\widehat{M}$ its $p$-adic completion and $M'$ the image of 
$M$ in $\widehat{M}$, then $M'$ is a dense $\OO$-torsion submodule of $\widehat{M}$. We equip $\widehat{M}/M'$ with the quotient topology. Then for all topological $\OO$-modules
 $N$ which are Hausdorff and $\OO$-torsion free we have
$$\Hom^{\cont}_{\OO}(\widehat{M}, N)\cong \Hom^{\cont}_{\OO}(\widehat{M}/M', N)=0.$$
Since $\Shat^i(\Theta^d)^d$ is Hausdorff and $\OO$-torsion free this implies the last assertion.
\end{proof}
We define 
$$ \Shat^i(\Pi):= \Shat^i(\Theta^d)^d\otimes_{\OO} L.$$
The definition does not depend on the choice of $\Theta$, since any two are commensurable. 
To motivate this definition we  observe that \eqref{modpin} implies that 
 we have natural isomorphisms 
 $$((\Theta\otimes_\OO L/\OO)^{\vee})^d\cong \Theta, \quad ((\Theta\otimes_\OO L/\OO)^{\vee})^d\otimes_\OO L\cong \Pi.$$

\begin{lem}\label{adm_banach} If $\Pi$ is an admissible unitary $L$-Banach space representation of $G$ then $\Shat^i(\Pi)$ is an admissible unitary $L$-Banach space representation of $D^{\times}$ for all $i\ge 0$. 
\end{lem}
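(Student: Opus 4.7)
The plan is to read off admissibility of $\Shat^i(\Pi)$ from a mod-$\varpi$ computation, combining topological Nakayama with Pontryagin duality, so that the whole question ultimately reduces to Scholze's admissibility theorem applied to the single smooth mod-$p$ coefficient sheaf $\FF_{\Theta/\varpi}$, which is already available in \cite{scholze}.

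Fix a pro-$p$ compact open subgroup $K$ of $D^\times$, so that $\OO[[K]]$ is a pseudocompact local ring with maximal ideal $\mm = (\varpi, I_K)$ and residue field $k$. I would first reduce the desired admissibility of $\Shat^i(\Pi)$ to the assertion that $\Shat^i(\Theta^d)$ is finitely generated as an $\OO[[K]]$-module: the maximal Hausdorff $\OO$-torsion-free quotient provided by Lemma \ref{trouble} inherits finite generation, and its Schikhof dual tensored with $L$ is then an admissible unitary $L$-Banach space representation of $K$, hence of $D^\times$. By the topological Nakayama lemma for compact $\OO[[K]]$-modules, it is enough to show that $\Shat^i(\Theta^d)/\mm \Shat^i(\Theta^d)$ is finite dimensional over $k$.

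Next I would translate this via Pontryagin duality. Writing $N := H^i_\et(\PP^1_{\Cp}, \FF_{\Theta \otimes_\OO L/\OO})$, so that $\Shat^i(\Theta^d) \cong N^\vee$, the duality identifies
$$\Shat^i(\Theta^d)/\mm \Shat^i(\Theta^d) \cong (N[\mm])^\vee = (N^K[\varpi])^\vee,$$
so the problem becomes to show $N^K[\varpi]$ is finite dimensional over $k$. For this, I would apply cohomology to the short exact sequence
$$0 \to \Theta/\varpi \to \Theta\otimes_\OO L/\OO \xrightarrow{\varpi} \Theta\otimes_\OO L/\OO \to 0$$
and read off from the resulting long exact sequence that $N[\varpi]$ is a quotient of $H^i_\et(\PP^1_{\Cp}, \FF_{\Theta/\varpi})$. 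Since $\Theta/\varpi$ is admissible as a smooth $G$-representation (by admissibility of $\Pi$), Scholze's theorem \cite[Thm.\,4.4]{scholze} guarantees that $H^i_\et(\PP^1_{\Cp}, \FF_{\Theta/\varpi})$ is admissible as a smooth $D^\times$-representation, and admissibility passes to quotients; taking $K$-invariants then yields the required finiteness of $N^K[\varpi] = N[\varpi]^K$.

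I do not anticipate a serious obstacle: the real content lives in Scholze's theorem, while the remainder is a fairly standard topological-Nakayama-plus-duality manipulation. The only point where one should pay attention is keeping straight the distinction between the compact module $\Shat^i(\Theta^d)$ used in the duality computation and its Schikhof dual $\Shat^i(\Theta^d)^d$ giving the unit ball in $\Shat^i(\Pi)$, and checking that finite generation of the former is exactly what is needed to force admissibility of the Banach space associated to the latter.
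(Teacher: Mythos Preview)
Your proposal is correct and follows essentially the same strategy as the paper: reduce admissibility of $\Shat^i(\Pi)$ to finite generation of $\Shat^i(\Theta^d)$ over $\OO[[K]]$, then establish the latter via topological Nakayama and Scholze's admissibility theorem for $H^i_\et(\PP^1_{\Cp}, \FF_{\Theta/\varpi})$. The only organizational difference is that the paper applies $\{\Shat^i\}$ directly to $0\to\Theta^d\xrightarrow{\varpi}\Theta^d\to\Theta^d/\varpi\to 0$ and uses noetherianity of $\OO[[K]]$ to conclude that $\Shat^i(\Theta^d)/\varpi$ is finitely generated (as a sub of $\Shat^i(\Theta^d/\varpi)$), whereas you go mod the full maximal ideal $\mm$ and translate via Pontryagin duality to $N[\varpi]^K$; both routes land on the same mod-$p$ input from \cite{scholze}.
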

\begin{proof} By applying $\{ \Shat^i\}_{i\ge 0}$ to the exact sequence $0\rightarrow \Theta^d \overset{\varpi}{\rightarrow} \Theta^d \rightarrow \Theta^d/ \varpi \Theta^d\rightarrow 0$ we obtain a long exact sequence: 

\begin{multline}\label{long_exact}
0\rightarrow  \Shat^2(\Theta^d)\overset{\varpi}{\rightarrow} \Shat^2(\Theta^d)\rightarrow \Shat^2(\Theta^d/\varpi)\rightarrow \Shat^1(\Theta^d)\overset{\varpi}{\rightarrow} \Shat^1(\Theta^d)\\\rightarrow  \Shat^1(\Theta^d/\varpi)\rightarrow \Shat^0(\Theta^d)\overset{\varpi}{\rightarrow} \Shat^0(\Theta^d)\rightarrow 
 \Shat^0(\Theta^d/\varpi)\rightarrow 0.
 \end{multline}

The terms for $i\ge 3$ vanish, because of the results of Scholze explained in the previous section. 
Moreover, since $\Pi$ is an admissible Banach space representation we know that $\Theta/\varpi$ is an admissible smooth $G$-representation. Scholze's result implies that $H^i_{\et}( \PP^{1}_{\Cp}, \FF_{\Theta/\varpi})$ are admissible smooth $D^{\times}$-representations. Thus the Pontryagin dual is a finitely generated $\OO[[K]]$-module, where $K$ is any compact open subgroup of $D^{\times}$. Since 
$\OO[[K]]$ is noetherian we deduce from \eqref{long_exact} that $\Shat^i(\Theta^d)/ \varpi$ is a finitely generated $\OO[[K]]$-module and topological Nakayama's lemma for compact $\OO[[K]]$-modules implies that $\Shat^i(\Theta^d)$ is a finitely generated $\OO[[K]]$-module, see \cite[Lem.\,1.4, Cor.\,1.5]{brumer}. The assertion follows from
the theory of Schneider-Teitelbaum \cite[Thm.\,3.5]{iw}.
\end{proof}

Now let us note, that even if we could  show that $\Shat^1(\Theta^d/\varpi)$ is non-zero, we cannot 
rule out using \eqref{long_exact} alone that $\Shat^i(\Pi)=0$ for all $i\ge 0$. A priori it could happen that 
$\Shat^2(\Theta^d)$ and $\Shat^0(\Theta^d)$ are both zero and $\Shat^1(\Theta^d)$ is killed by some power of $\varpi$. This is where Ludwig's theorem enters.

Let $\rbarss$ be  as in the previous section. Let $\Ban^{\adm}_G(L)_{\rbarss}$ be the full subcategory of the category of $\Ban^{\adm}_G(L)$ such that $\Pi$ is in $\Ban^{\adm}_G(L)_{\rbarss}$ if and only if 
 the isomorphism classes of all the irreducible subquotients of $\Theta/\varpi$ lie in $\BB_{\rbarss}$. We note that this 
 last condition depends only on $\Pi$ and not on the choice of open bounded $G$-invariant lattice $\Theta$, see \cite[Lem.\,4.3]{image}.
  One can deduce from the result for $\Mod^{\ladm}_G(\OO)$  that $\Ban^{\adm}_G(L)_{\rbarss}$ is a direct summand of $\Ban^{\adm}_G(L)$, see \cite[Prop.\,5.36]{image}.
 
 \begin{prop}\label{enough} Assume that $\rbarss=\chi_1\oplus \chi_2$ with $\chi_1\chi_2^{-1}\neq \omega^{\pm 1}$
 and let $\pi_1$ and $\pi_2$ be the principal series representation defined in \eqref{def_pi}, so that
  $\BB_{\rbarss}$ consists of the isomorphism classes of $\pi_1$ and $\pi_2$. Then the following assertions are equivalent: 
  \begin{itemize}
  \item[(i)] both $H^1_{\et}(\PP^{1}_{\Cp}, \FF_{\pi_1})$ and $H^1_{\et}(\PP^{1}_{\Cp}, \FF_{\pi_2})$ vanish;
  \item[(ii)] $H^1_{\et}(\PP^1_{\Cp}, \FF_{\pi})=0$ for all $\pi$ in $\Mod^{\ladm}_G(\OO)_{\rbarss}$;
  \item[(iii)] $\Shat^1(\Pi)=0$ for all  $\Pi\in \Ban^{\adm}_{G}(L)_{\rbarss}$;
  \item[(iv)] if $\Pi\in \Ban^{\adm}_{G}(L)_{\rbarss}$ is  absolutely irreducible, non-ordinary then 
  $\Shat^1(\Pi)=0$. 
  \end{itemize}
  \end{prop}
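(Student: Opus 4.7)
The plan is to establish the cycle (i) $\Rightarrow$ (ii) $\Rightarrow$ (iii) $\Rightarrow$ (iv) $\Rightarrow$ (i). The implications (ii) $\Rightarrow$ (i) (since $\pi_1,\pi_2 \in \Mod^{\ladm}_G(\OO)_{\rbarss}$) and (iii) $\Rightarrow$ (iv) (restriction to a subfamily) are tautological, so three substantive steps remain.

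For (i) $\Rightarrow$ (ii): by Corollary~\ref{ludwig_cor} the functor $\Sc^1$ is exact, and by the chain of isomorphisms in \eqref{limits} it commutes with filtered direct limits. Any $\pi \in \Mod^{\ladm}_G(\OO)_{\rbarss}$ is a filtered colimit of its finitely generated admissible subrepresentations, each of which has finite length with Jordan--H\"older factors in $\{\pi_1,\pi_2\}$. Exactness and an obvious length induction therefore reduce the vanishing of $\Sc^1$ on the whole block to its vanishing on the two generators $\pi_1$ and $\pi_2$, which is (i).

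For (ii) $\Rightarrow$ (iii), fix $\Pi \in \Ban^{\adm}_G(L)_{\rbarss}$ and an open bounded lattice $\Theta$. Then $\Theta/\varpi \in \Mod^{\ladm}_G(\OO)_{\rbarss}$ by definition of the block, so (ii) gives $H^1_{\et}(\PP^{1}_{\Cp},\FF_{\Theta/\varpi}) = 0$; combined with Corollary~\ref{cor_ludwig} this forces $\Shat^i(\Theta^d/\varpi) = 0$ for every $i$. The long exact sequence \eqref{long_exact} then makes $\varpi$ surjective on each $\Shat^i(\Theta^d)$. Since each $\Shat^i(\Theta^d)$ is finitely generated over $\OO[[K]]$, as shown in the proof of Lemma~\ref{adm_banach}, topological Nakayama forces $\Shat^i(\Theta^d) = 0$ and in particular $\Shat^1(\Pi) = 0$.

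The crux is (iv) $\Rightarrow$ (i). The $p$-adic Langlands correspondence supplies an absolutely irreducible non-ordinary $\Pi \in \Ban^{\adm}_G(L)_{\rbarss}$, e.g.\ the Banach representation attached to any absolutely irreducible two-dimensional continuous lift of $\rbarss$; by \cite{CDP}, \cite{image} the semisimplification of $\Theta/\varpi$ is $\pi_1 \oplus \pi_2$, so both $\pi_i$ actually appear as Jordan--H\"older factors. Running the argument of the previous paragraph with only Corollary~\ref{cor_ludwig} in hand (not (ii)) yields $\Shat^0(\Theta^d) = \Shat^2(\Theta^d) = 0$ and shows that $\Shat^1(\Theta^d)$ is $\OO$-torsion free with $\Shat^1(\Theta^d)/\varpi \cong \Shat^1(\Theta^d/\varpi)$. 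The assumption $\Shat^1(\Pi) = \Shat^1(\Theta^d) \otimes_\OO L = 0$ makes $\Shat^1(\Theta^d)$ simultaneously $\OO$-torsion and $\OO$-torsion free, hence zero, so $H^1_{\et}(\PP^{1}_{\Cp},\FF_{\Theta/\varpi}) = 0$. By exactness of $\Sc^1$ (Corollary~\ref{ludwig_cor}), $H^1_{\et}(\PP^{1}_{\Cp},\FF_{\pi_1})$ and $H^1_{\et}(\PP^{1}_{\Cp},\FF_{\pi_2})$ are subquotients of zero, which is (i). The main obstacle is the selection of the single $\Pi$ at the start of this step: everything depends on finding an absolutely irreducible non-ordinary Banach representation in the block whose mod-$\varpi$ reduction witnesses \emph{both} principal series $\pi_1$ and $\pi_2$, which is precisely where the $p$-adic Langlands correspondence and the lattice-reduction results of \cite{CDP}, \cite{image} do the heavy lifting.
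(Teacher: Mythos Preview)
Your argument is correct and follows essentially the same route as the paper's proof: the cycle (i) $\Rightarrow$ (ii) $\Rightarrow$ (iii) $\Rightarrow$ (iv) $\Rightarrow$ (i), with the substantive content concentrated in the last implication, where one picks a single non-ordinary $\Pi$ whose lattice reduces to an extension of $\pi_1$ and $\pi_2$ and then uses exactness of $\Sc^1$ together with $\OO$-torsion freeness of $\Shat^1(\Theta^d)$.

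One small slip: you write $\Shat^1(\Pi)=\Shat^1(\Theta^d)\otimes_{\OO}L$, but by definition $\Shat^1(\Pi)=\Shat^1(\Theta^d)^d\otimes_{\OO}L$. This does not damage the argument, since once you know $\Shat^1(\Theta^d)$ is $\OO$-torsion free and compact (hence a product of copies of $\OO$), its Schikhof dual vanishes if and only if it does; the paper phrases this step as ``if $\Shat^1(\Theta^d)\neq 0$ then $\Shat^1(\Pi)\neq 0$''. Also, your proof of (ii) $\Rightarrow$ (iii) is more elaborate than necessary: since $\Theta\otimes_{\OO}L/\OO$ itself lies in $\Mod^{\ladm}_G(\OO)_{\rbarss}$, assertion (ii) kills $H^1_{\et}(\PP^1_{\Cp},\FF_{\Theta\otimes L/\OO})$ directly, hence $\Shat^1(\Theta^d)=0$ without invoking Nakayama.
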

  \begin{proof} Any locally admissible representation of $\GL_2(\Qp)$ is equal to the union of its subrepresentations of finite length, see \cite[Thm.\,2.3.8]{ord1}. 
  If (i) holds then (ii) follows from \eqref{limits} and Corollary \ref{ludwig_cor}. If (ii) holds then \eqref{covid-19} implies (iii), 
  which trivially implies (iv). For the proof of (iv) implies (i) recall that $\Pi$ is non-ordinary if and only if 
  it does not occur as a subquotient of a unitary parabolic induction of any unitary character of the maximal torus in $G$. If $\Theta$ is an open bounded $G$-invariant lattice in $\Pi$ then the semi-simplification of $\Theta/\varpi$ is isomorphic to $\pi_1\oplus \pi_2$ by \cite[Thm.\,11.1]{image}. Ludwig's theorem implies that $H^2(\PP^1_{\Cp}, \FF_{\Theta/\varpi})=0$ and the isomorphism \eqref{H0} implies that the same holds for $H^0$. Topological Nakayama's lemma together with \eqref{long_exact} implies that $\Shat^i(\Theta^d)=0$ for $i=0$ and $i=2$. We deduce that we have a short exact sequence:
 \begin{equation}\label{sese}
 0 \rightarrow \Shat^1(\Theta^d)\overset{\varpi}{\rightarrow} \Shat^1(\Theta^d)\rightarrow  
  H^1_{\et}(\PP^1_{\Cp}, \FF_{\Theta/\varpi})^{\vee}\rightarrow 0.
 \end{equation}
 Thus  $\Shat^1(\Theta^d)$ is $\OO$-torsion free. Since $\Shat^1(\Theta^d)$ is a compact $\OO$-module, this implies that $\Shat^1(\Theta^d)$ is isomorphic to a product of copies of $\OO$. In particular, 
 if $\Shat^1(\Theta^d)\neq 0$ then $\Shat^1(\Pi)\neq 0$. Thus (iv) implies that $\Shat^1(\Theta^d)=0$ and 
 \eqref{sese} implies that $H^1_{\et}(\PP^1_{\Cp}, \FF_{\Theta/\varpi})=0$. Since the semisimplification 
 of $\Theta/\varpi$ is isomorphic to $\pi_1\oplus \pi_2$, Corollary \ref{ludwig_cor} implies (i).
 \end{proof}
 
 \begin{remar} We will show later on that part (i) of the Proposition does not hold by showing that completed cohomology gives a counterexample to (ii). However, we 
 can not rule out that   one of the 
 groups can  vanish in (i) (unless of course $\pi_1\cong \pi_2$). Most likely both groups are non-zero
 since there is no natural way to distinguish one of the principal series in the block.
 \end{remar}
 
 Let us record a further consequence of Corollary \ref{ludwig_cor}. Let $\dualcat_G(\OO)_{\rbarss}$ be the full subcategory of $\dualcat_G(\OO)$, which is anti-equivalent to $\Mod^{\ladm}_G(\OO)_{\rbarss}$
 via Pontryagin duality. We refer the reader to \cite{brumer} for the basics on pseudocompact rings and completed tensor products. 

\begin{prop} \label{test_flatness}
Assume that $\rbarss=\chi_1\oplus \chi_2$ with $\chi_1\chi_2^{-1}\neq \omega^{\pm 1}$. Let $M\in \dualcat_G(\OO)_{\rbarss}$  and let $A$ be a pseudocompact ring together with a continuous action on $M$ via a ring homomorphism $A\rightarrow  \End_{\dualcat(\OO)}(M)$. Then for all right pseudocompact $A$-modules $\md$ we have a natural isomorphism: 
$$ \Shat^1(\md \wtimes_A M)\cong \md\wtimes_A\Shat^1(M).$$ 
In particular, if $M$ is a pro-flat $A$-module, in the sense  that the functor $\md \mapsto \md\wtimes_A M$  is exact, then so is $\Shat^1(M)$.
\end{prop}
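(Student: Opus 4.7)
The plan is to combine three ingredients: (a) the exactness of $\Shat^1$ on $\dualcat_G(\OO)_{\rbarss}$, which is Corollary \ref{ludwig_cor} transported across Pontryagin duality; (b) the compatibility of $\Shat^1$ with filtered projective limits in $\dualcat_G(\OO)_{\rbarss}$, inherited from the fact that $\pi \mapsto \FF_\pi$ and \'etale cohomology on the coherent site commute with filtered direct limits, as in \eqref{limits}; and (c) the fact that every pseudocompact right $A$-module is a filtered projective limit of finitely presented ones. The $A$-action on $\Shat^1(M)$ is obtained by functoriality from $A \to \End_{\dualcat(\OO)}(M)$, and the universal property of $\wtimes_A$ yields a natural map
$$\md \wtimes_A \Shat^1(M) \longrightarrow \Shat^1(\md \wtimes_A M),$$
tautologically an isomorphism for $\md = A$ and hence for $\md$ finite free.

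If $\md$ is finitely presented, pick a presentation $A^m \to A^n \to \md \to 0$ and apply the right-exact functor $- \wtimes_A M$ to obtain an exact sequence $M^m \to M^n \to \md \wtimes_A M \to 0$ in $\dualcat_G(\OO)_{\rbarss}$ (the block is closed under finite direct sums and quotients). Applying the exact functor $\Shat^1$ produces $\Shat^1(M)^m \to \Shat^1(M)^n \to \Shat^1(\md \wtimes_A M) \to 0$, which identifies $\Shat^1(\md \wtimes_A M)$ with $\md \wtimes_A \Shat^1(M) = \coker(\Shat^1(M)^m \to \Shat^1(M)^n)$.

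For an arbitrary pseudocompact $\md$ write $\md = \varprojlim_i \md_i$ as a filtered projective limit of finitely presented modules. Completed tensor product commutes with such limits in the first argument, so both $\md \wtimes_A M$ and $\md \wtimes_A \Shat^1(M)$ are $\varprojlim_i$ of the finitely presented case. It suffices to know that $\Shat^1$ itself commutes with such limits: unravelling $\Shat^1(N) = H^1_{\et}(\PP^1_{\Cp}, \FF_{N^\vee})^\vee$, the first Pontryagin dual converts the projective limit to a filtered direct limit of smooth representations, Scholze's sheaf construction and \'etale cohomology on the coherent sites $(\PP^1_{\Cp}/K)_\et$ both commute with such direct limits (cf.\ \eqref{limits}), and the second Pontryagin dual converts back. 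Taking $\varprojlim_i$ of the isomorphism at finite level yields the general case. The "in particular" clause is then immediate, since the composite of two exact functors is exact.

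The main obstacle is purely bookkeeping: one has to check that the various pseudocompact tensor products and projective limits interact cleanly, that $\md \wtimes_A M$ remains in $\dualcat_G(\OO)_{\rbarss}$ at every step (so that Corollary \ref{ludwig_cor} applies), and that the filtered system $\{\md_i\}$ satisfies Mittag--Leffler so that $\varprojlim_i$ is exact on the relevant diagrams; all of these are standard features of the pseudocompact formalism but should be spelled out. Once these are in place, the real content of the proposition is the exactness of $\Shat^1$ guaranteed by Ludwig's vanishing.
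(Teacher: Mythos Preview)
Your proposal is correct and follows essentially the same approach as the paper. The paper's proof is terser: it simply observes that $\Shat^1$ is exact and commutes with arbitrary products (the dual statement to $\Sc^1$ commuting with direct sums), then cites \cite[Prop.~2.4]{duke} for the formal deduction; your reduction via filtered projective limits of finitely presented modules is a slightly different packaging of the same two ingredients, and your derivation of the limit-compatibility from \eqref{limits} is exactly how the paper would justify the product-compatibility if pressed.
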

\begin{proof} 
Since the functor $\Sc^1:  \Mod^\ladm_G(\OO)_{\rbarss} \longrightarrow \Mod^\ladm_{G_{\Qp} \times D^\times}(\OO)$ commutes with direct sums and is exact by Corollary \ref{ludwig_cor}, the functor $\Shat^1:\dualcat_G(\OO)_{\rbarss}\rightarrow \dualcat_{G_{\Qp} \times D^\times}(\OO)$ commutes with products and is also exact. Since any pseudocompact $A$-module $\md$ can be presented as 
$$\prod_{i\in I} A \rightarrow \prod_{j\in J} A \rightarrow \md \rightarrow 0,$$ 
for some sets $I$ and $J$,  the proposition is a formal consequence
of these two properties, see the proof of Proposition 2.4 in \cite{duke}, which is based on ideas of Kisin in \cite{kisin}.
\end{proof}
\begin{lem} Let $A$ be a complete local noetherian $\OO$-algebra with residue field $k$ and let $M$ be a 
pseudocompact $A$-module. Then $M$ is pro-flat if and only if it is flat.
\end{lem}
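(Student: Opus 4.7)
The plan is to reduce both directions to the standard flatness criterion that $M$ is flat over a noetherian ring $A$ if and only if $\Tor_1^A(A/I,M)=0$ for every ideal $I\subset A$. As a preliminary step I would record that for any finitely generated $A$-module $N$ and any pseudocompact $A$-module $M$, the natural comparison map $N\otimes_A M\to N\wtimes_A M$ is an isomorphism. This is a standard consequence of choosing a finite presentation $A^m\to A^n\to N\to 0$ (available since $A$ is noetherian), applying the right-exact functors $-\otimes_A M$ and $-\wtimes_A M$ which agree on finite free modules (both send $A^r$ to $M^r$), and invoking the five lemma on the resulting right-exact sequences.

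For pro-flat implies flat, I would test pro-flatness on the short exact sequence $0\to I\to A\to A/I\to 0$ for an arbitrary ideal $I\subset A$; since $I$ is finitely generated it is closed in the $\mm_A$-adic topology on $A$, so this is genuinely a short exact sequence in the pseudocompact category. Combined with the preliminary identification, pro-flatness then yields exactness of $0\to I\otimes_A M\to M\to M/IM\to 0$, which is precisely the vanishing $\Tor_1^A(A/I,M)=0$ for every ideal $I$; flatness follows.

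For the converse, given a short exact sequence $0\to \md'\to \md\to \md''\to 0$ of pseudocompact $A$-modules, I would fix a neighborhood basis $\{U_\alpha\}$ of $0$ in $\md$ by open $A$-submodules with $\md/U_\alpha$ of finite length over $A$, and set $U'_\alpha=U_\alpha\cap \md'$, with $U''_\alpha$ the image of $U_\alpha$ in $\md''$. This produces compatible short exact sequences
\[ 0\to \md'/U'_\alpha\to \md/U_\alpha\to \md''/U''_\alpha\to 0 \]
of finite length $A$-modules. Tensoring with the flat $A$-module $M$ preserves exactness, and surjectivity of the transition maps along $\alpha$ provides the Mittag-Leffler condition, so $\varprojlim_\alpha$ remains exact. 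The preliminary identification rewrites these finite tensor products as completed tensor products, and the definitional identity $\md\wtimes_A M=\varprojlim_\alpha (\md/U_\alpha)\wtimes_A M$ (and likewise for $\md'$ and $\md''$) recovers the required short exact sequence after applying $-\wtimes_A M$. The main delicate point I expect to be the cofinality of the induced filtrations $\{U'_\alpha\}$ on $\md'$ and $\{U''_\alpha\}$ on $\md''$ in their respective topologies, which reflects the strictness of the original sequence in the pseudocompact category; this is where the closedness of $\md'$ in $\md$ and the quotient topology on $\md''$ enter, and is the one technical point that requires care rather than pure formal manipulation.
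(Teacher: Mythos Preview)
Your argument is correct. The forward direction (pro-flat $\Rightarrow$ flat) is exactly what the paper does: both of you observe that $N\otimes_A M\cong N\wtimes_A M$ for finitely generated $N$ and then test on $N=A/I$.

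For the converse, however, the paper takes a different and shorter route. Rather than verifying exactness of $-\wtimes_A M$ on an arbitrary short exact sequence via Mittag--Leffler, the paper simply specialises the comparison $\widehat{\Tor}_1^A(k,M)\cong\Tor_1^A(k,M)$ to $\md=k$: flatness of $M$ forces this to vanish, and then topological Nakayama's lemma implies that $M$ is pro-free, hence pro-flat. This gives a stronger conclusion (every flat pseudocompact $A$-module is in fact pro-free) and sidesteps the cofinality issue entirely, at the cost of using that $A$ is local. Your argument, by contrast, is more elementary and would survive in greater generality (it does not use locality of $A$ in any essential way beyond the existence of a cofinal system of finite-length quotients), but the point you flag about strictness of the sequence --- that the induced filtrations $\{U'_\alpha\}$ and $\{U''_\alpha\}$ are cofinal in the pseudocompact topologies on $\md'$ and $\md''$ --- does need to be invoked; it is a standard property of the abelian category of pseudocompact modules (see Brumer), so your instinct that it requires care but is ultimately routine is correct.
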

\begin{proof} Let $\widehat{\Tor}_i^A(\ast, M)$ be the $i$-th left derived functor of $\ast \wtimes_A M$. 
If $\md$ is a finitely generated $A$-module then, since $A$ is noetherian, $\md$ has a resolution by free 
$A$-modules of finite rank. Since $A^n \otimes_A M\cong M^n \cong A^n \wtimes_A M$, we 
conclude that $\widehat{\Tor}_i^A(\md, M)\cong \Tor_i^A(\md, M)$ for all finitely generated $\md$. If
$M$ is pro-flat then applying this observation to $\md=A/I$ for any ideal $I$ of $A$, we deduce that 
the map $I\otimes_A M\rightarrow M$ is injective and hence $M$ is flat. If $M$ is flat then taking $\md=k$
we obtain that $\widehat{\Tor}_1^A(k, M)=0$, and a standard application of the topological Nakayama's lemma shows that  $M$ is topologically free, and hence pro-flat, see Proposition 0.3.8 in Expos\'e $VII_B$ in SGA3. 
\end{proof}

\section{Fibres and flatness}\label{fandf}

In this section we explain a variation on \cite[Proposition A.30]{gee_newton}, where the authors prove a version of 
`miracle flatness' in a non-commutative setting. Let $\Lambda$ be an Auslander regular ring, we refer 
the reader to \cite[\S A.1]{gee_newton} or \cite{venjakob} for the definition. If $M$ is a finitely generated $\Lambda$-module 
then the grade of $M$ over $\Lambda$ is defined as 
$$j_\Lambda(M):=\inf\{ i: \Ext^i_\Lambda(M, \Lambda)\neq 0\}$$
and the dimension of $M$ over $\Lambda$ is defined as 
$$\delta_\Lambda(M):= \mathrm{gld}(\Lambda)- j_\Lambda(M),$$
where $\mathrm{gld}(\Lambda)$ is the global dimension of $\Lambda$. We say that $M$ is \textit{Cohen-Macaulay} if $\Ext^i_{\Lambda}(M, \Lambda)$ is non-zero for a single degree $i$. 
In particular, finite free modules are Cohen-Macaulay. 

Let $K$ be a compact torsion-free $p$-adic 
analytic pro-$p$ group. Then the rings $k[[K]]$, $\OO[[K]]$, $\OO[[K]]\otimes_\OO L$ are Auslander regular of 
global dimension $\dim K$, $1+\dim K$ and $\dim K$, respectively, where $\dim K$ is the dimension of
$K$ as a $p$-adic manifold. If $K$ is any compact $p$-adic analytic group  then the ring $\OO[[K]]$ might not be Auslander regular in general, but we may choose
an open pro-$p$ subgroup $K_1$ as above, and for a finitely generated $\OO[[K]]$-module $M$, define
$\delta_{\OO[[K]]}(M):=\delta_{\OO[[K_1]]}(M)$. Then $\delta_{\OO[[K]]}(M)$ does not depend on a choice of $K_1$ and has the expected properties of a dimension function. We will sometimes omit 
$\OO[[K]]$ from the notation, and just write $\delta(M)$.

\begin{prop} \label{use_GN}Let $A=\OO[[x_1,\ldots, x_r]]$ and let $M$ be an $A[[K]]$-module, which is finitely generated as an 
$\OO[[K]]$-module. Then
\begin{equation}\label{eqn_dim_bound}
\delta_{\OO[[K]]}(k\otimes_A M)+ \dim A \ge  \delta_{\OO[[K]]} (M).
\end{equation}
If $M$ is $A$-flat then 
\begin{equation}\label{eqn_dim}
\delta_{\OO[[K]]}(k\otimes_A M)+ \dim A= \delta_{\OO[[K]]} (M).
\end{equation}
If $M$ is Cohen--Macaulay as $\OO[[K]]$-module then \eqref{eqn_dim} implies that $M$ is $A$-flat. 
\end{prop}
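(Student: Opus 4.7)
The plan is to induct on $\dim A = r+1$, reducing one variable at a time. Fix an open torsion-free pro-$p$ subgroup $K_1 \subseteq K$ and set $\Lambda := \OO[[K_1]]$, which is Auslander regular; $M$ is finitely generated over $\Lambda$ and the operators $x_i \in A$ act as central $\Lambda$-linear endomorphisms. The induction combines two ingredients at each step: a single-variable dimension inequality, and the induction hypothesis with one fewer variable. Writing $B = \OO[[x_1, \ldots, x_{r-1}]]$ so that $A = B[[x_r]]$, the natural isomorphism $k \otimes_A M \cong k \otimes_B (M/x_r M)$ allows us to chain the two.

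The single-variable ingredient I would use is: for any central element $x \in \End_\Lambda(M)$ (concretely $x = x_r$ in the inductive step or $x = \varpi$ in the base case $r=0$), one has $\delta_\Lambda(M/xM) \geq \delta_\Lambda(M) - 1$, with equality if and only if $x$ is $M$-regular, i.e.\ multiplication by $x$ on $M$ is injective. This is obtained by applying $\Ext^\bullet_\Lambda(-, \Lambda)$ to the two short exact sequences coming from splitting
\[
0 \to M[x] \to M \xrightarrow{x} M \to M/xM \to 0
\]
as $0 \to M[x] \to M \to xM \to 0$ and $0 \to xM \to M \to M/xM \to 0$, and then tracing how the grade $j_\Lambda$ propagates; the $M$-regular case reduces to the standard grade shift $j(M/xM) = j(M) + 1$ coming from the long exact sequence attached to $0 \to M \xrightarrow{x} M \to M/xM \to 0$.

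Granting the single-variable statement, the induction is formal. Applying it to $x_r$ gives $\delta(M/x_r M) \geq \delta(M) - 1$, while the induction hypothesis for the $B[[K]]$-module $M/x_r M$ gives $\delta(k \otimes_A M) + \dim B \geq \delta(M/x_r M)$; adding $1 = \dim A - \dim B$ produces \eqref{eqn_dim_bound}, and the base case $r=0$ is simply the single-variable inequality for $\varpi$. If $M$ is $A$-flat then $x_r$ is $M$-regular and $M/x_r M$ is $B$-flat, so both inequalities in the chain are equalities and \eqref{eqn_dim} follows by induction.

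The hardest step will be the Cohen--Macaulay converse. Assuming $M$ is CM and equality holds in \eqref{eqn_dim}, I need to force each step of the chain to be an equality and deduce $A$-flatness. The key technical input is that for a CM $\Lambda$-module the grade is pure, so every nonzero submodule of $M$ has grade exactly $j_\Lambda(M)$; applied to $M[x_r]$, this forces $M[x_r] = 0$ whenever $\delta(M/x_r M) = \delta(M) - 1$, i.e.\ $x_r$ is $M$-regular. Consequently $M/x_r M$ is again CM, and by induction is $B$-flat. Iterating with $x_{r-1}, \ldots, x_1, \varpi$ produces an $M$-regular sequence in $A$ of length $\dim A$, which is a regular system of parameters; the local criterion for flatness then yields that $M$ is $A$-flat. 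The delicate point, and the one I expect to require the most care, is justifying the purity step cleanly in the $\Lambda$-linear (non-commutative) setting, for which I would follow the treatment in \cite{venjakob} and the analogous argument in \cite[Prop.\,A.30]{gee_newton}.
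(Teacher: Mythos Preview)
Your inductive scheme is essentially what lies beneath the Gee--Newton results the paper cites, and the inequality \eqref{eqn_dim_bound} does follow exactly as you say from iterating the single-variable drop (this is \cite[Lem.\,A.15]{gee_newton}). There is one overstatement and one genuine gap, and comparing with the paper's route explains both.

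The overstatement: your ``equality if and only if $x$ is $M$-regular'' is false in the ``only if'' direction. Take $\Lambda=\OO[[y]]$, $M=\Lambda\oplus\Lambda/y$ with $x$ acting as $y$; then $M[x]\neq 0$ but $\delta(M/xM)=1=\delta(M)-1$. You do not actually use this direction (you argue the Cohen--Macaulay converse via purity instead), so this is cosmetic.

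The gap is in the other direction, which you \emph{do} use for ``$A$-flat $\Rightarrow$ equality''. The ``standard grade shift'' $j_\Lambda(M/xM)=j_\Lambda(M)+1$ for $x$ $M$-regular is standard only when $x$ is a central regular element of the ring $\Lambda$ itself. In your setup $x=x_r$ is merely a $\Lambda$-linear endomorphism of $M$ (it lies in $A$, not in $\Lambda=\OO[[K_1]]$), and the long exact sequence only gives $\Ext^{j(M)}_\Lambda(M/xM,\Lambda)\cong\Ext^{j(M)}_\Lambda(M,\Lambda)[x^*]$; there is no a priori reason for $x^*$ to act injectively on $\Ext^{j(M)}_\Lambda(M,\Lambda)$. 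So the step ``$x_r$ $M$-regular hence $\delta(M/x_rM)=\delta(M)-1$'' is not justified as written.

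This is exactly what the paper's approach sidesteps. Rather than staying over $\OO[[K]]$, the paper invokes \cite[Lem.\,A.19]{gee_newton} to identify $\delta_{\OO[[K]]}(M)=\delta_{A[[K]]}(M)$ and $\delta_{\OO[[K]]}(k\otimes_A M)=\delta_{k[[K]]}(k\otimes_A M)$, and then applies \cite[Prop.\,A.30]{gee_newton} over $A[[K]]$. Over $A[[K]]$ the elements $\varpi,x_1,\ldots,x_r$ \emph{are} central and regular on the ring, so the Rees-type change-of-rings grade shift $j_{A[[K]]/(x_r)}(M/x_rM)=j_{A[[K]]}(M)$ is available, and both the flat-implies-equality direction and the Cohen--Macaulay converse go through cleanly. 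If you want to run your induction directly over $\OO[[K]]$, you will need to supply the missing injectivity of $x^*$ (or an equivalent argument), and that is not a triviality in the noncommutative setting.
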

\begin{proof} The first assertion follows from Lemma A.15 of \cite{gee_newton}, which says that 
if we mod out one relation then the dimension either goes down by one or stays the same.
The other assertions follow from Proposition A.30 in \cite{gee_newton} by observing that
$\mathrm{gld}(A[[K]])= \dim A  +\mathrm{gld}(k[[K]])$ and $\delta_{A[[K]]}(M)=\delta_{\OO[[K]]}(M)$, 
$\delta_{k[[K]]}(k\otimes_A M)= \delta_{\OO[[K]]}(k\otimes_A M)$
by \cite[Lemma A.19]{gee_newton}. Note that the ring $A[[K]]$ is again Auslander regular. 
\end{proof}

 \begin{cor}\label{find_A} Let $R$ be a complete local noetherian $\OO$-algebra with residue field $k$.
Let $M$ be an $R[[K]]$-module, which is finitely generated over $\OO[[K]]$. Assume that 
$M$ is $\OO$-torsion free, $M$ is Cohen-Macaulay over $\OO[[K]]$ and the action of $R$ on 
$M$ is faithful. Then 
\begin{equation}\label{eqn_dim1}
\delta_{\OO[[K]]}(k\otimes_R M)+ \dim R\ge \delta_{\OO[[K]]} (M).
\end{equation}
and equality holds if and only if there is a subring $A\subset R$, such that $R$ is a finite $A$-module, 
$A$ is formally smooth over $\OO$ and  $M$ is $A$-flat.
\end{cor}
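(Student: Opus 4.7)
The plan is to deduce the corollary from Proposition \ref{use_GN} via Cohen's structure theorem. First I note that $R$ must be $\OO$-flat: if $\varpi r = 0$ in $R$, then $r\cdot(\varpi M)=0$, and since $\varpi$ acts injectively on the $\OO$-torsion free module $M$, this forces $rM=0$, whence $r=0$ by faithfulness of the $R$-action. By Cohen's structure theorem (complete Noether normalization) there exists a subring $A\subset R$ with $A\cong \OO[[x_1,\ldots, x_{d-1}]]$ for $d=\dim R$, such that $R$ is finite as an $A$-module; this $A$ is formally smooth over $\OO$, and conversely any formally smooth complete local Noetherian $\OO$-subalgebra of $R$ with residue field $k$ is of this form.

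Since $R$ is finite over $A$, $M$ is finitely generated over $A[[K]]$, and Proposition \ref{use_GN} yields
$$\delta_{\OO[[K]]}(k\otimes_A M)+\dim A\ \ge\ \delta_{\OO[[K]]}(M),$$
with equality if and only if $M$ is $A$-flat (the ``only if'' direction uses the Cohen--Macaulay hypothesis). To replace $k\otimes_A M$ by $k\otimes_R M$, I observe that $\bar R := R/\mathfrak m_A R$ is a finite local Artin $k$-algebra whose maximal ideal $I=\mathfrak m_R/\mathfrak m_A R$ is nilpotent. The $I$-adic filtration of $k\otimes_A M = M/\mathfrak m_A M$ therefore has finitely many nonzero steps, and each successive quotient $I^i M/I^{i+1}M$ is annihilated by $\mathfrak m_R$ and is a quotient of $(I^i/I^{i+1})\otimes_k (M/\mathfrak m_R M)$, i.e.\ a quotient of a finite direct sum of copies of $k\otimes_R M$. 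Additivity of $\delta$ under short exact sequences then gives
$$\delta_{\OO[[K]]}(k\otimes_A M) = \delta_{\OO[[K]]}(k\otimes_R M),$$
and combined with $\dim A=\dim R$ this proves \eqref{eqn_dim1} together with the equivalence ``equality in \eqref{eqn_dim1} $\Leftrightarrow$ $M$ is $A$-flat for this particular $A$''.

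For the ``if'' direction of the stated equivalence, any formally smooth $A'\subset R$ with $R$ finite over $A'$ is isomorphic to $\OO[[y_1,\ldots,y_s]]$ with $\dim A'=\dim R$, so flatness of $M$ over $A'$ combined with Proposition \ref{use_GN} and the same filtration argument produces equality. For the ``only if'' direction, the $A$ constructed via Noether normalization above serves as the required subring, because equality in \eqref{eqn_dim1} forces equality in the Proposition \ref{use_GN} inequality for $A$, and the Cohen--Macaulay hypothesis then yields $A$-flatness of $M$. The main subtle point in the argument is the dimension identity $\delta_{\OO[[K]]}(k\otimes_A M)=\delta_{\OO[[K]]}(k\otimes_R M)$ obtained from the finite nilpotent filtration; the rest is a direct invocation of Proposition \ref{use_GN} together with the structure theory of complete local $\OO$-algebras.
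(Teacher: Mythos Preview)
Your proof is correct and follows essentially the same approach as the paper's: both arguments deduce $\OO$-torsion freeness of $R$ from faithfulness, invoke Cohen's structure theorem to find a power series subring $A$ over which $R$ is finite, use a finite filtration of $k\otimes_A M$ with graded pieces built from subquotients of $k\otimes_R M$ to identify $\delta_{\OO[[K]]}(k\otimes_A M)=\delta_{\OO[[K]]}(k\otimes_R M)$, and then appeal to Proposition~\ref{use_GN}. You are slightly more explicit than the paper about the ``if'' direction (allowing an arbitrary formally smooth $A'$ rather than the specific $A$ produced by Cohen's theorem), but this is a cosmetic difference.
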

\begin{proof} Since $R$ acts faithfully on $M$ and $M$ is $\OO$-torsion free, $R$ is $\OO$-torsion free. 
By Cohen's structure theorem for complete local rings there is a subring $A\subset R$ such that $R$ is finite 
over $A$ and $A\cong \OO[[x_1, \ldots, x_r]]$, see \cite[Theorem 29.4 (iii)]{mat} and the Remark 
following it.  Note that this implies that $\dim A=\dim R$.

Since $k\otimes_A M \cong (k \otimes_A R) \otimes_R M$, it admits $k\otimes_R M$ as a quotient, and 
since $k \otimes_A R$ is an $R$-module of finite length, it has a filtration of finite length with graded pieces isomorphic to subquotients of $k\otimes_R M$. Lemma
A.8 in \cite{gee_newton} implies that $\delta_{\OO[[K]]}(k\otimes_A M)= \delta_{\OO[[K]]}(k\otimes_R M)$. The corollary now follows from the proposition.
\end{proof}

\begin{lem}\label{dim_0} Let $M$ be a finitely generated $k[[K]]$-module. Then $\delta_{k[[K]]}(M)=0$ if and only if $M$ is a finite dimensional $k$-vector space. 
\end{lem}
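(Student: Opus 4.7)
The plan is to reduce to the case where $K$ is uniform pro-$p$, and then use the Koszul-type resolution of the trivial module together with subadditivity of $\delta$ for one direction, and a filtration/associated-graded argument for the other. Note that the notation $\delta_{k[[K]]}$ was defined by passing to an open uniform pro-$p$ subgroup $K_1 \subseteq K$. Since $k[[K]]$ is free of finite rank over $k[[K_1]]$, a finitely generated $k[[K]]$-module is finitely generated over $k[[K_1]]$, and $M$ is finite-dimensional over $k$ as a $k[[K]]$-module iff it is so as a $k[[K_1]]$-module. So I may assume from the start that $K$ is uniform pro-$p$ of dimension $d$, so that $\Lambda := k[[K]]$ is a local Auslander regular ring of global dimension $d$ with residue field $k$.

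For the forward direction, suppose $M$ is finite-dimensional over $k$. Topological Nakayama applied to the maximal ideal $\mathfrak{m}$ of $\Lambda$ shows that $M$ is annihilated by some power of $\mathfrak{m}$, hence admits a finite filtration whose graded pieces are simple $\Lambda$-modules. Since $K$ is pro-$p$, every such simple module is isomorphic to the trivial module $k$. Thus by the subadditivity of $\delta$ along short exact sequences (Gee–Newton, Lemma A.8), it suffices to show $\delta_\Lambda(k) = 0$. For this I will invoke the standard Koszul-type resolution of $k$ by finite free $\Lambda$-modules of length $d$ available for uniform pro-$p$ groups, which gives $\Ext^i_\Lambda(k, \Lambda) = 0$ for $i < d$ and $\Ext^d_\Lambda(k, \Lambda) \cong k$. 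Hence $j_\Lambda(k) = d$ and $\delta_\Lambda(k) = d - d = 0$.

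For the converse, suppose $\delta_\Lambda(M) = 0$. I will use the identification of $\delta$ with the Krull dimension of the associated graded. Equip $\Lambda$ with the $\mathfrak{m}$-adic filtration; then $\gr \Lambda$ is a polynomial algebra $k[x_1,\dots,x_d]$ (this is where the uniform pro-$p$ hypothesis is essential). Choose a good filtration on $M$ compatible with a finite set of generators. By the standard comparison theorem (see e.g.\ Venjakob, or \cite[Lemma A.19]{gee_newton} together with the behaviour of $\delta$ under passage to associated graded), $\delta_\Lambda(M)$ equals the Krull dimension of the finitely generated graded $\gr \Lambda$-module $\gr M$. The hypothesis $\delta_\Lambda(M) = 0$ then translates to $\gr M$ being of Krull dimension zero, hence of finite length as a $\gr \Lambda$-module. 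Lifting the finite length composition series up the filtration shows that $M$ itself has finite length as a $\Lambda$-module; since all composition factors are $k$, we conclude that $M$ is finite-dimensional as a $k$-vector space.

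The main obstacle is really only bookkeeping: checking that the dimension function $\delta_{k[[K]]}$, defined via the open pro-$p$ subgroup $K_1$, has the properties claimed (independence from the choice of $K_1$ being already noted in the text, and subadditivity being Gee–Newton's Lemma A.8). The identification of $\delta$ with Krull dimension of $\gr M$ is well documented in Venjakob's work on Iwasawa algebras, and the resolution of $k$ over a uniform pro-$p$ Iwasawa algebra is classical; I will cite these rather than reprove them.
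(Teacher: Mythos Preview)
Your proof is correct and, for the converse direction, essentially identical to the paper's: both pass to the $\mathfrak{m}$-adic associated graded, identify $\delta_{k[[K]]}(M)$ with the Krull dimension of $\gr M$ over the polynomial ring $\gr k[[K]]$, and then read off finite-dimensionality from Krull dimension zero (the paper phrases this via the Hilbert polynomial and Nakayama, you via finite length of $\gr M$ and lifting---these are the same argument).

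The only genuine difference is in the forward direction. You filter $M$ by copies of $k$, invoke subadditivity of $\delta$, and compute $\delta_\Lambda(k)=0$ via the Koszul resolution. The paper instead uses the associated graded uniformly for both directions: if $M$ is finite-dimensional over $k$ then so is $\gr M$, hence $\dim \gr M = 0$, hence $\delta_\Lambda(M)=0$. The paper's route is shorter and avoids the need to cite the Koszul resolution separately, since the identification $\delta = \dim \gr$ already encodes that information. Your route is perfectly valid but does a bit more work than necessary.
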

\begin{proof} We may assume that $K$ is a uniform pro-$p$ group of dimension $d$. Let $\mm$ be the maximal ideal of $k[[K]]$, 
then the graded ring $\gr_{\mm}^\bullet (k[[K]])$ is a polynomial ring in $d$-variables, \cite[Thm.\,8.7.10]{wilson} and 
$\gr_{\mm}^\bullet(M )$ is a finitely generated $\gr_{\mm}^\bullet (k[[K]])$-module of dimension equal 
to $\delta_{k[[K]]}(M)$, \cite[Prop.\,5.4]{ardakov_brown}. In particular, the degree of the Hilbert polynomial of $\gr_\mm^{\bullet}(M)$ is equal to $\delta_{k[[K]]}(M)$. If $M$ is a finite dimensional $k$-vector space then $\delta_{k[[K]]}(M)=0$.
Conversely, if $\delta_{k[[K]]}(M)=0$ then the 
Hilbert polynomial of $\gr_\mm^{\bullet}(M)$ is constant and thus $\mm^j M= \mm^{j+1} M$ for some 
$j$. Nakayama's lemma implies that $\mm^j M=0$. Since $M$ 
is finitely generated over $k[[K]]$, we deduce that $M$ is a finite dimensional $k$-vector space. 
\end{proof}

\section{Local-global compatibility}\label{lg}
Let $p$ be a prime and let $F$ be a totally real number field with a fixed place $\pp$ above $p$, such that $F_{\pp}=\Qp$,  and a fixed infinite place $\infty_F$.  
Let $D_0$ be a quaternion algebra with centre $F$, ramified at all the infinite places of $F$ and split at $\pp$. Let $\Sigma$ be a set of finite ramification places of $D_0$. We fix a maximal order $\OO_{D_0}$ of $D_0$, and for each finite place $v\not\in \Sigma$ an isomorphism $(\OO_{D_0})_v\cong M_2(\OO_{F_v})$. For each finite place $v$ of $F$ we will denote by 
$\mathbf N(v)$ the order of the residue field at $v$, and by $\varpi_v\in F_v$ a uniformizer. 

Denote by $\AfF\subset \mathbb A_F$ the finite adeles and the adeles respectively. 
Let $U=\prod_v U_v$ be a compact open subgroup contained in $\prod_v (\OO_{D_0})_v^{\times}$. We 
assume that $U_\pp=\GL_2(\Zp)=K$ and that $U_v$ is a pro-$p$ group at other places above $p$. We may write 
\begin{equation}\label{double_coset}
(D_0 \otimes_F \AfF )^{\times} = \coprod_{i\in I} D_0^{\times}t_i U(\AfF )^{\times}
\end{equation}
 for some $t_i \in  (D_0 \otimes_F \AfF )^{\times}$ and a finite index set $I$, where we have identified $(\AfF )^{\times}$ with the centre 
 of $(D_0 \otimes_F \AfF )^{\times}$. In the arguments that follow we are free to replace $U$ by a smaller open subgroup by shrinking $U_v$ at any place $v$ different from $\pp$.
 In particular, we may assume that 
 \begin{equation}\label{trivial}
 (U (\AfF)^{\times}\cap t_i^{-1}D_0^{\times} t_i)/F^{\times} =1 
 \end{equation}
 for all $i\in I$, see for example Lemma 3.2 in \cite{blocksp2}.  We will use standard notation for subgroups of $U$, so for example $U^{\pp}_p= \prod_{v\mid p, v\neq \pp} U_v$, $U^p= \prod_{v\nmid p} U_v$.
 
 If $A$ is a topological $\OO$-algebra we let $S(U^p, A)$ be be the space of continuous functions 
 $$ f: D_0^{\times}\backslash (D_0\otimes_F \AfF)^{\times}/U^p\rightarrow A.$$
   The group  $(D_0\otimes\Qp)^\times$ acts continuously on $S(U^p, A)$ by right translations. 
 It follows from \eqref{trivial} that the map $f\mapsto [u \mapsto \sum_{i\in I} f(t_i u)]$ induces an 
 isomorphism of $U(\AfF)^{\times}$-representations
 \begin{equation}\label{beijing}
 S(U^p, A)\overset{\cong}{\longrightarrow} \bigoplus_{i\in I} C(U^p  F^{\times}\backslash  U (\AfF)^{\times}, A),
 \end{equation}
 where $C$ denotes the space of continuous functions. Let $\psi: (\AfF )^{\times}/F^{\times}\rightarrow \OO^{\times}$ be a continuous character  such that $\psi$ is trivial of  
 $(\AfF )^{\times}\cap U^p$. We may consider $\psi$ as an $A$-valued character, via $\OO^{\times}\rightarrow A^\times$. Let 
 $S_\psi(U^p, A)$ be the $A$-submodule of $S(U^p, A)$, consisting of functions 
 such that $f(gz)=\psi(z) f(g)$ for all $z\in (\AfF)^{\times}$. The isomorphism  \eqref{beijing} 
 induces an isomorphism of $U_p$-representations:
 \begin{equation}\label{beijing_1}
 S_{\psi}(U^p, A)\overset{\cong}{\longrightarrow} \bigoplus_{i\in I} C_\psi(U_p, A),
 \end{equation}
where $C_{\psi}$ denotes the continuous functions on which the centre acts by the character $\psi$.
 One may think of $S_\psi(U^p, A)$ as the space of  algebraic automorphic forms on $D_0^\times$ with tame level $U^p$ and no restrictions on the
 level or weight at places dividing $p$. We want to introduce a variant by fixing the level and weight 
 at places dividing $p$, different from $\pp$. Let $\lambda$ be a continuous representation
 $U^{\pp}_p$ on a free $\OO$-module of finite rank, such that $(\AfF)^\times\cap U^{\pp}_p$ acts 
 on $\lambda$ by the restriction of $\psi$ to this group. We let 
 $$S_{\psi, \lambda}(U^{\pp}, A):= \Hom_{U^\pp_p}(\lambda, S_\psi(U^p, A)).$$
  We will omit $\lambda$ as an index if it is the trivial representation. Let us note that a presentation 
  $\OO[[U_p^{\pp}]]^{\oplus n}\rightarrow \OO[[U_p^{\pp}]]^{\oplus m}\twoheadrightarrow \lambda$ gives us an exact sequence: 
  \begin{equation}\label{beijing_3}
  0\rightarrow S_{\psi, \lambda}(U^\pp, A)\rightarrow S_{\psi}(U^p, A)^{\oplus m}\rightarrow  S_{\psi}(U^p, A)^{\oplus n}.
  \end{equation}
  If $A$ is an $\OO/\varpi^n$-module then there is an open subgroup $V_p^{\pp}$ of  $U^{\pp}_p$, which acts trivially on $\lambda/ \varpi^n$.
  If we let $V^{\pp}:= U_p V^\pp_p$ then by  taking $V_p^{\pp}$-invariants of \eqref{beijing_3} we have an exact sequence 
    \begin{equation}\label{beijing_4}
  0\rightarrow S_{\psi, \lambda}(U^\pp, A)\rightarrow S_{\psi}(V^{\pp}, A)^{\oplus m}\rightarrow  S_{\psi}(V^{\pp}, A)^{\oplus n}.
  \end{equation}
If the topology on $A$ is discrete, for example if $A=L/\OO$ or $A=\OO/\varpi^n$ then we have 
    \begin{equation}\label{beijing_5}
    S_{\psi}(U^p, A)\cong \varinjlim_{U_p^{\pp}} S_{\psi}(U^\pp, A).
    \end{equation}
  The action of 
 $(D_0\otimes\Qp)^\times$ on $S_\psi(U^p, A)$ by right translations induces 
 a continuous  action of $(D_0\otimes_F  F_{\pp})^{\times}\cong \GL_2(\Qp)=G$ on $S_\psi(U^p, A)$ and 
 $S_{\psi, \lambda}(U^{\pp}, A)$. Let $\zeta:\Qp^{\times}\rightarrow \OO^\times$ be the character obtained by restricting $\psi$ to $F_{\pp}^\times$.

 \begin{lem}\label{admissible} The representations $S_\psi(U^p, L/\OO)$, $S_{\psi, \lambda}(U^\pp, L/\OO)$ lie in $\Mod^{\ladm}_{G, \zeta}(\OO)$. Moreover, $S_{\psi, \lambda}(U^\pp, L/\OO)$ is admissible.
\end{lem}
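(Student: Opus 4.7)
My plan is to reduce all three assertions (central character, local admissibility of $S_\psi(U^p, L/\OO)$, admissibility of $S_{\psi,\lambda}(U^\pp, L/\OO)$) to a single finiteness computation coming from \eqref{beijing_1}. The central character claim is immediate: $F_\pp^\times = \Qp^\times$ sits inside $(\AfF)^\times$, which acts by right translation through $\psi$, so the centre of $G$ acts on both spaces by $\zeta = \psi|_{\Qp^\times}$. Smoothness of the $G$-action is automatic since $L/\OO$ is discrete and so continuous functions into it are locally constant.

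The key computational input I want to establish is the following: for any open compact subgroups $V_p^\pp \subset U_p^\pp$ and $H \subset K$, the invariants $S_\psi(U^p, L/\OO)^{V_p^\pp \times H}$ are an $\OO$-module of cofinite type. Applying \eqref{beijing_1} with $A = L/\OO$ and taking $V_p^\pp \times H$-invariants on both sides (using that $V_p^\pp \times H \subset U_p^\pp \times U_\pp = U_p$), the claim reduces to the statement for each summand $C_\psi(U_p, L/\OO)^{V_p^\pp \times H}$. Since $V_p^\pp$ and $H$ are open in the compact groups $U_p^\pp$ and $K$, the quotient $U_p/(V_p^\pp \times H)$ is a finite set, and $C_\psi(U_p, L/\OO)^{V_p^\pp \times H}$ sits inside $(L/\OO)^{|U_p/(V_p^\pp \times H)|}$ as the $\OO$-submodule cut out by the $\psi$-equivariance constraints, hence is of cofinite type; finiteness of the index set $I$ finishes the step.

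With this in hand, local admissibility of $S_\psi(U^p, L/\OO)$ falls out by writing it as $\bigcup_{V_p^\pp} S_\psi(U^p, L/\OO)^{V_p^\pp}$, where $V_p^\pp$ runs over the open subgroups of $U_p^\pp$. Since $V_p^\pp$ centralizes the $G$-action at $\pp$, each term is a $G$-subrepresentation, and for any open compact $H \subset G$ its $H$-invariants are $S_\psi(U^p, L/\OO)^{V_p^\pp \times H}$, of cofinite type by the key computation, so each term is admissible and their union is locally admissible. For admissibility of $S_{\psi,\lambda}(U^\pp, L/\OO)$ I will work torsion level by torsion level: given $H$ and $n$, use \eqref{beijing_4} with $A = \OO/\varpi^n$ and $V_p^\pp(n) \subset U_p^\pp$ acting trivially on $\lambda/\varpi^n$ to embed $S_{\psi,\lambda}(U^\pp, \OO/\varpi^n)$ into a finite direct sum of copies of $S_\psi(V^\pp(n), \OO/\varpi^n)$, whose $H$-invariants are finite by the key computation; this bounds $S_{\psi,\lambda}(U^\pp, L/\OO)^H[\varpi^n]$ for every $n$ and yields cofinite type, hence admissibility, for every $H$.

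The main subtlety is bookkeeping: keeping straight the distinction between level at $\pp$, at other places above $p$, and away from $p$, and letting the auxiliary subgroup $V_p^\pp(n)$ depend on $n$ rather than seeking a uniform choice, which would not exist for $L/\OO$-coefficients. Beyond this, no deeper structural input is needed than the finite-index property of open subgroups in compact groups.
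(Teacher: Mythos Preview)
Your argument is correct. The approach for the first assertion (local admissibility via the key finiteness computation coming from \eqref{beijing_1}) is essentially what the paper does, only spelled out in more detail than the paper's one-line ``follows from \eqref{beijing_1}''.

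For the admissibility of $S_{\psi,\lambda}(U^\pp, L/\OO)$ you take a genuinely different route. The paper writes down a direct $K$-equivariant isomorphism
\[
S_{\psi,\lambda}(U^\pp, L/\OO) \cong \bigoplus_{i\in I} \lambda^\vee \otimes_\OO C_\zeta(K, L/\OO),
\]
from which admissibility is immediate since $I$ is finite and $\lambda$ is a finitely generated $\OO$-module. You instead bound $S_{\psi,\lambda}(U^\pp, L/\OO)^H[\varpi^n]$ by embedding, via \eqref{beijing_4}, into copies of $S_\psi(V^\pp(n), \OO/\varpi^n)^H$ for an $n$-dependent level $V_p^\pp(n)$. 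This works, and your observation that the auxiliary level must vary with $n$ is exactly right. The paper's approach is shorter and has the side benefit that the explicit isomorphism it produces is reused later (for injectivity in Lemma~\ref{injective} and in the proof of Proposition~\ref{find_flat}); your approach avoids having to derive that isomorphism but is a bit more circuitous and does not yield it as a byproduct.
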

\begin{proof} The first assertion  follows from \eqref{beijing_1}, which also implies that we have 
an isomorphism of $K$-representations
\begin{equation}\label{beijing_2}
S_{\psi, \lambda}(U^{\pp}, L/\OO)\cong \bigoplus_{i\in I} \lambda^{\vee} \otimes_{\OO} C_{\zeta}(K, L/\OO), 
\end{equation}
where $\lambda^{\vee}:=\Hom^{\cont}_{\OO}(\lambda, L/\OO)$ denotes the Pontryagin dual of 
$\lambda$. Since the set $I$ is finite and $\lambda$ is a finite $\OO$-module, we deduce the second assertion. 
\end{proof}

\begin{lem}\label{injective} The restrictions of $S_\psi(U^p, L/\OO)$  and $S_{\psi, \lambda}(U^\pp, L/\OO)$ to $K$ are injective in $\Mod^{\sm}_{K, \zeta}(\OO)$.
\end{lem}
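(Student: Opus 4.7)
The plan is to reduce the injectivity assertion to that of $C_\zeta(K,L/\OO)$ in $\Mod^{\sm}_{K,\zeta}(\OO)$. Writing $\psi^\pp$ for the restriction of $\psi$ to $(\AfF)^\times \cap U_p^\pp$ and exploiting $U_p \cong K \times U_p^{\pp}$ with the corresponding product decomposition of $(\AfF)^\times \cap U_p$, one obtains a $K$-equivariant identification
\[ C_\psi(U_p, L/\OO) \;\cong\; C_{\psi^\pp}\bigl(U_p^{\pp},\, C_\zeta(K, L/\OO)\bigr), \]
in which $K$ acts only on the inner $C_\zeta$-factor. Combined with \eqref{beijing_1} this exhibits $S_\psi(U^p, L/\OO)$ as a finite direct sum, indexed by $I$, of copies of $C_{\psi^\pp}(U_p^\pp, C_\zeta(K, L/\OO))$. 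For $S_{\psi,\lambda}(U^\pp, L/\OO)$, the Frobenius-type reciprocity
\[ \Hom_{U_p^{\pp}}\bigl(\lambda,\, C_{\psi^\pp}(U_p^\pp, M)\bigr) \;\cong\; \Hom_\OO(\lambda, M), \]
valid whenever $M$ carries only an $\OO$-module structure (since $\lambda$ already has central character $\psi^\pp$), together with $\lambda$ being free of finite rank over $\OO$, reduces matters to a finite direct sum of copies of $C_\zeta(K, L/\OO)$, matching \eqref{beijing_2}.

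The injectivity of $C_\zeta(K, L/\OO)$ itself will then follow from the natural isomorphism of functors on $\Mod^{\sm}_{K, \zeta}(\OO)$,
\[ \Hom_K\bigl(\pi,\, C_\zeta(K, L/\OO)\bigr) \;\overset{\sim}{\longrightarrow}\; \pi^\vee, \qquad \phi \longmapsto \bigl[v \mapsto \phi(v)(1)\bigr], \]
whose inverse sends $\tilde\phi$ to $v \mapsto [k \mapsto \tilde\phi(k \cdot v)]$; the image function is smooth by smoothness of $\pi$ and has the correct central character on $\Zp^\times$ exactly because $\pi$ has central character $\zeta$. Since $L/\OO$ is an injective $\OO$-cogenerator, the Pontryagin dual $\pi \mapsto \pi^\vee$ is exact, and hence so is $\Hom_K(-, C_\zeta(K, L/\OO))$. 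Pushing the same $\Hom$-computation one step further gives $\Hom_K(\pi, C_{\psi^\pp}(U_p^\pp, C_\zeta(K, L/\OO))) \cong C_{\psi^\pp}(U_p^\pp, \pi^\vee)$; the functor $M \mapsto C_{\psi^\pp}(U_p^\pp, M)$ is also exact (left exactness is formal, and a locally constant lift of a given $f \in C_{\psi^\pp}(U_p^\pp, M'')$ is produced by choosing an open normal subgroup $N \subset U_p^\pp$ through which $f$ factors and on which $\psi^\pp$ is trivial, arbitrarily lifting $f$ across coset representatives, and propagating the lift by $\psi^\pp$), so $S_\psi(U^p, L/\OO)$ is injective as a finite direct sum of such summands.

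The main bookkeeping obstacle will be making the factorisation $C_\psi(U_p, L/\OO) \cong C_{\psi^\pp}(U_p^\pp, C_\zeta(K, L/\OO))$ and the Frobenius reciprocity above precise, in particular tracking how $\psi$ splits into $\zeta$ at $\pp$ and $\psi^\pp$ away from $\pp$, and verifying that all the $\Hom$-cancellations respect this decomposition. Once these formal manipulations are in place, the proof reduces to the injectivity of $L/\OO$ over $\OO$ together with the two elementary exactness statements above.
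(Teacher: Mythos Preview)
Your proposal is correct and follows essentially the same route as the paper, which simply cites \eqref{beijing_1} and \eqref{beijing_2} and leaves the injectivity of $C_\zeta(K,L/\OO)$ and $C_\psi(U_p,L/\OO)$ in $\Mod^{\sm}_{K,\zeta}(\OO)$ as understood. You have supplied the details the paper omits: the Frobenius reciprocity identification $\Hom_K(\pi,C_\zeta(K,L/\OO))\cong\pi^\vee$, the product factorisation along $U_p\cong K\times U_p^{\pp}$, and the exactness of $M\mapsto C_{\psi^{\pp}}(U_p^{\pp},M)$, all of which are standard and correctly argued.
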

\begin{proof} This follows from \eqref{beijing_1} and \eqref{beijing_2}.
\end{proof}

Let $S$ be  a finite set of places of $F$ containing $\Sigma$, all the places above 
$p$, all the infinite places and all the places $v$, where $U_v$ is not maximal and all the ramification places of $\psi$. Let $\TT^{\univ}_{S}=\OO[ T_v, S_v]_{v\not\in S}$ 
be a commutative polynomial ring in the indicated formal variables. If $A$ is a topological $\OO$-algebra
then
$S_{\psi}(U^p, A)$ and $S_{\psi, \lambda}(U^\pp, A)$ become $\TT^{\univ}_{S}$-modules with $S_v$ acting via the double coset 
$U_v \bigl ( \begin{smallmatrix} \varpi_v & 0 \\ 0 & \varpi_v\end{smallmatrix}\bigr) U_v$ and $T_v$ acting via the double coset 
$U_v \bigl ( \begin{smallmatrix} \varpi_v & 0 \\ 0 & 1\end{smallmatrix}\bigr) U_v$. 

Let $G_{F,S}$ be 
the absolute Galois group of the maximal extension of $F$ in $\overline{F}$ which is unramified outside $S$. Let $\rhobar: G_{F, S}\rightarrow \GL_2(k)$ be a continuous absolutely  irreducible representation.
Let $\mathfrak{m}$ be the maximal ideal of $\mathbb{T}_{S}^{\univ}$ generated  by $\varpi$ and all elements, which reduce modulo 
$\varpi$ to  $T_v- \tr \rhobar(\Frob_v), S_v \mathbf N(v)- \det \rhobar(\Frob_v)$ for all $v\not\in S$. 
We denote by $S_{\psi}(U^p, L/\OO)_\mm$ and $S_{\psi, \lambda}(U^\pp, L/\OO)_\mm$ the localisations 
at $\mm$, and assume that they are non-zero for fixed $\psi$ and $\lambda$.

\begin{lem}\label{K-injective} The representations $S_{\psi}(U^p, L/\OO)_\mm$ and  $S_{\psi, \lambda}(U^\pp, L/\OO)_\mm$ are
 direct summands of $S_{\psi}(U^p, L/\OO)$ and  $S_{\psi, \lambda}(U^\pp, L/\OO)$, respectively. 
 In particular, their restrictions to $K$ are injective in $\Mod^{\sm}_{K, \zeta}(\OO)$. Moreover, if $S(U^p, L/\OO)_{\mm}$ (resp. $S_{\lambda}(U^\pp, L/\OO)_{\mm}$) 
 is non-zero then $S_{\psi}(U^p, L/\OO)_{\mm}$ (resp. $S_{\psi, \lambda}(U^\pp, L/\OO)_{\mm}$) is non-zero if and only if $\det \rhobar \equiv \psi \chi_{\cyc} \pmod{\varpi}$.
 \end{lem}
 \begin{proof} If $U'_p$ is an open subgroup of $U_p$ then $S_{\psi}(U^p, \OO/\varpi^n)^{U_p'}$ 
 is a finitely generated $\OO/\varpi^n$-module and Chinese remainder theorem implies that
 $(S_{\psi}(U^p, \OO/\varpi^n)^{U_p'})_\mm$ is a direct summand of $S_{\psi}(U^p, \OO/\varpi^n)^{U_p'}$. By passing to a direct limit 
 over all such $U_p'$ and $n\ge 1$ we obtain the assertion for  $S_{\psi}(U^p, L/\OO)$. The 
 argument for $S_{\psi, \lambda}(U^\pp, L/\OO)$ is the same. The second assertion follows from Lemma \ref{injective}. Since $S_{\psi}(U^p, L/\OO)$ is a
 a union of $\mathbb{T}_{S}^{\univ}$-submodules, which are $\OO$-modules of finite length, its localisation at $\mm$ is non-zero if and only if 
 the $\mm$-torsion subspace $S_{\psi}(U^p, L/\OO)[\mm]$ is non-zero. The same argument applied to $S(U^p, L/\OO)$ shows that
 $S(U^p, L/\OO)[\mm]$ is non-zero. Since the Hecke operators $S_v$ act on $S(U^p, L/\OO)[\mm]$ with eigenvalue $\det \rhobar(\Frob_v)  \mathbf N(v)^{-1}$
 for all $v\not\in S$,  we deduce from Chebotarev's density theorem that $(\AfF )^{\times}$ acts on $S(U^p, L/\OO)[\mm]$ by the character $\chi_{\cyc} \det \rhobar$. 
 Hence, $S_{\psi}(U^p, L/\OO)[\mm]$ is non-zero if and only if $\det \rhobar \equiv \psi \chi_{\cyc} \pmod{\varpi}$. The same argument applies to $S_{\psi, \lambda}(U^\pp, L/\OO)_{\mm}$.
 \end{proof}
 
 We let 
$$ S_{\psi}(U^p, \OO)_\mm:= \varprojlim_n S_{\psi}(U^p, \OO/\varpi^n)_\mm, \quad 
S_{\psi, \lambda}(U^\pp, \OO)_\mm:= \varprojlim_n S_{\psi, \lambda}(U^\pp, \OO/\varpi^n)_\mm$$
equipped with the $p$-adic topology. It follows from \eqref{beijing_1} that for all $n\ge 1$ the map
$S_{\psi}(U^p, \OO)_\mm/\varpi^n \rightarrow S_{\psi}(U^p, \OO/\varpi^n)_\mm$ is an isomorphism, 
    and \eqref{beijing_2} implies that the same holds for $S_{\psi, \lambda}(U^\pp, \OO)_{\mm}$.
It follow from the discussion in section \ref{dualcat} that we have natural homeomorphisms
\begin{equation}\label{more_duals}
S_{\psi}(U^p, \OO)_\mm\cong (( S_{\psi}(U^p, L/\OO)_\mm)^{\vee})^d, \quad S_{\psi, \lambda}(U^\pp, \OO)_\mm\cong (( S_{\psi,\lambda}(U^\pp, L/\OO)_\mm)^{\vee})^d.
\end{equation}

Let $\rbar$ denote the restriction of $\rhobar$ to the decomposition group at $\pp$, which we identify with 
the absolute Galois group of $\Qp$. 

\begin{prop}\label{right_object} $S_\psi(U^p, L/\OO)_\mm$ and  $S_{\psi, \lambda}(U^\pp, L/\OO)_\mm$ lie in $\Mod^{\ladm}_{G, \zeta}(\OO)_{\rbarss}$.
\end{prop}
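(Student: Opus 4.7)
My plan for proving Proposition \ref{right_object} proceeds in three steps, plus an initial reduction.

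First, I reduce to the case that $\lambda$ is trivial. The exact sequence \eqref{beijing_4} (after shrinking $U_p^{\pp}$ to a suitable open subgroup $V_p^{\pp}$) realises $S_{\psi,\lambda}(U^\pp, L/\OO)$ as a $G$-stable submodule of $S_\psi(V^\pp, L/\OO)^{\oplus m}$ for some $m$. Localisation at $\mm$ is exact, and the full subcategory $\Mod^{\ladm}_{G,\zeta}(\OO)_{\rbarss}$ is closed under subobjects and finite direct sums inside $\Mod^{\ladm}_{G,\zeta}(\OO)$. It therefore suffices to establish the assertion for $S_\psi(U^p, L/\OO)_\mm$ at any allowed tame level.

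Second, I use the functoriality of the block decomposition \eqref{decompose_can}: every object of $\Mod^{\ladm}_{G,\zeta}(\OO)$ splits canonically as a direct sum of summands indexed by isomorphism classes of semisimple two-dimensional representations of $G_{\Qp}$ of appropriate determinant. Applied to $S_\psi(U^p, L/\OO)$ this gives a $G$-equivariant decomposition
$$ S_\psi(U^p, L/\OO) \;=\; \bigoplus_{\rbar'} S_\psi(U^p, L/\OO)_{\rbar'}, $$
the sum running over such $\rbar'$. Since every operator in $\TT^{\univ}_S$ acts by a $G$-equivariant endomorphism, this decomposition is automatically $\TT^{\univ}_S$-equivariant, and hence commutes with localisation at $\mm$. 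The proposition is therefore equivalent to the vanishing $(S_\psi(U^p, L/\OO)_{\rbar'})_\mm = 0$ for every $\rbar' \not\cong \rbarss$.

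Third, this vanishing is an instance of mod-$p$ local-global compatibility at $\pp$. To each maximal ideal $\mm'$ of $\TT^{\univ}_S$ in the support of the block summand $S_\psi(U^p, L/\OO)_{\rbar'}$ one attaches, via the Eichler--Shimura and Jacquet--Langlands constructions for Hilbert modular forms on $\GL_2$ and their mod-$p$ reductions, a semisimple Galois representation $\rhobar_{\mm'}: G_{F,S} \to \GL_2(\overline{k})$ with the prescribed trace and determinant on Frobenii away from $S$. Mod-$p$ local-global compatibility at $\pp$ forces $\rhobar_{\mm'}|_{G_{\Qp}}$ to have semisimplification $\rbar'$. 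If $\mm'$ were to lie above $\mm$, then absolute irreducibility of $\rhobar$ would give $\rhobar_{\mm'} \cong \rhobar$; restricting to $G_{\Qp}$ and semisimplifying would yield $\rbar' \cong \rbarss$, contradicting the hypothesis.

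The genuine obstacle is the mod-$p$ local-global compatibility invoked in the third step. In the residually absolutely irreducible situation assumed here it is known, and is in any case implicit in the existence of the surjection $R_{\rhobar}\twoheadrightarrow\TT(U^p)_\mm$ flagged in the introduction. A more intrinsic route, closer in spirit to the methods used elsewhere in the paper, is the theory of capture: one captures admissible subrepresentations of $S_\psi(U^p, L/\OO)_\mm$ through their $\sigma$-isotypic components for $\sigma$ an algebraic $K$-type, reads off the block at $\pp$ from the shape of these isotypic components, and then invokes the classical local Langlands correspondence together with the Berger--Breuil theorems on universal unitary completions and Colmez's compatibility result to pin down the block as $\BB_{\rbarss}$.
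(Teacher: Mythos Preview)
Your reduction steps are fine and match the paper's: reduce to trivial $\lambda$ via \eqref{beijing_4}, and reformulate the claim as a statement about which block the irreducible subrepresentations fall into. The gap is in Step~3, which is the entire content of the proposition. You invoke ``mod-$p$ local-global compatibility at $\pp$'' to assert that if $\mm$ supports the $\rbar'$-block summand then $\rhobar_{\mm}|_{G_{\Qp}}^{\semi}\cong\rbar'$; but that implication \emph{is} the proposition. Your proposed escape via the surjection $R_{\rhobar}\twoheadrightarrow\TT(U^p)_\mm$ does not work: that surjection encodes the Galois side, not the $\GL_2(\Qp)$-block structure. To pass from Galois information to the block one needs that the two $R^{\ps}_{\tr\rbar}$-actions $\theta_1$ and $\theta_2$ agree, which is the paper's Proposition~\ref{equal_actions}, proved \emph{after} Proposition~\ref{right_object} and using it. Your alternative route via capture, Berger--Breuil, and Colmez is closer to what the paper does for Proposition~\ref{equal_actions}, but you do not carry it out, and it is heavier machinery than needed here.

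The paper's argument is more elementary and direct. Take an irreducible subrepresentation $\pi\subset S_\psi(U^\pp,k)_\mm$ and an irreducible $K$-subrepresentation $\sigmabar\cong\Sym^b k^2\otimes\det^a$ of $\pi$. By Barthel--Livn\'e, $\pi$ is a quotient of $\cInd_K^G\sigmabar/(T-\lambda,S-\mu)$ for some $\lambda,\mu\in k$, so it suffices to show that $a,b,\lambda,\mu$ are determined by $\rbar$. The key step is the Deligne--Serre lifting lemma: since $\Hom_K(\sigma^\circ,S_{\psi,\lambda}(U^\pp,\OO)_\mm)$ is a finite free $\OO$-module reducing to $\Hom_K(\sigmabar,S_{\psi,\lambda}(U^\pp,k)_\mm)$, one can lift the Hecke eigensystem to characteristic~$0$, obtaining an automorphic form $f$ whose Galois representation $\rho_f$ lifts $\rhobar$ and is crystalline at $\pp$ with Hodge--Tate weights $(1-a,-a-b)$. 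Classical (characteristic-zero) local-global compatibility then identifies the $G$-representation generated by $f$ as $\Psi\otimes\sigma$ with $\Psi$ an unramified principal series whose Satake parameters are read off from $\WD(\rho_f|_{G_{F_\pp}})$; reducing mod $\varpi$ via Breuil's compatibility gives $\lambda,\mu$ in terms of $\rbar$. The possibilities for $(a,b)$ are the Serre weights of $\rbar$. Hence $\pi\in\BB_{\rbarss}$.
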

\begin{proof}  We first observe that the centre of $G$ acts on both $S_\psi(U^p, L/\OO)_\mm$ and  
$S_{\psi, \lambda}(U^\pp, L/\OO)_\mm$ by the restriction of $\psi$ to $F_{\pp}^{\times}$, which is 
equal to $\zeta$ by definition.

Since for any $\pi\in \Mod^{\ladm}_G(\OO)$, $\soc_G \pi \hookrightarrow \pi$ is essential, where
$\soc_G$ denotes the maximal semi-simple subrepresentation, 
it follows from \eqref{decompose_can} that it is enough to show that all irreducible subrepresentations of 
 $S_{\psi, \lambda}(U^\pp, L/\OO)_\mm$  and $S_{\psi}(U^p, L/\OO)_\mm$ lie in $\BB_{\rbarss}$. 
 Since any such representation is killed by $\varpi$ we may replace $L/\OO$ with $k$.
 
 It is enough to prove the statement for $S_\psi(U^{\pp}, k)_\mm$ with 
 $U_p^\pp$ arbitrary small, since then \eqref{beijing_4} and \eqref{beijing_5} imply the assertion in general.

Let $\pi$ be an irreducible subrepresentation of $S_\psi(U^\pp, k)_\mm$.   After enlarging $L$ we may assume that $\pi$ is absolutely irreducible. Let $\sigma$ be an irreducible $K$-subrepresentation of $\pi$. Then $\sigmabar$ is isomorphic to 
$\Sym^b k^2 \otimes \det^a$, for uniquely determined integers $0\le b\le p-1$, $0\le a\le p-2$. It follows from \cite{bl} that 
$\End_G(\cInd_K^G \sigma)\cong k[T, S^{\pm 1}]$, where the Hecke operators  $T$ and $S$ correspond to the double cosets
$K \bigl ( \begin{smallmatrix} p & 0 \\ 0 & 1 \end{smallmatrix} \bigr) K$ and $K \bigl ( \begin{smallmatrix} p & 0 \\ 0 & p \end{smallmatrix} \bigr) K$, respectively. Moreover, there are $\lambda, \mu\in k$ such that $\pi$ is a quotient of 
$$ \cInd_K^G \sigma/ ( T-\lambda, S-\mu).$$
We claim that one may read off $\lambda$, $\mu$ and the possible values of $(a, b)$ from the restriction of $\rhobar$ to
the decomposition group at $\pp$.  It then follows from \cite{bl} and \cite{breuil1}, which describe the irreducible quotients 
of $\cInd_K^G \sigma/ ( T-\lambda, S-\mu)$, that $\pi$ lies in~$\BB_{\rbarss}$. These arguments are by now fairly standard and appear in the weight part of Serre's conjecture, so we only give a sketch. 

We first modify the setting slightly: if $\psi': (\AfF )^{\times}/F^{\times}\rightarrow \OO^{\times}$ is a character congruent to $\psi$ modulo $\varpi$ and $\lambda'$ is a representation of $U^\pp_p$ on 
a finite $\OO$-module with central character $\psi'$ and $U^{\pp}_p$ is a pro-$p$ group, then 
 the $U^{\pp}_p$-invariants of $\lambda'/\varpi$ are  non-zero, and thus $S_\psi(U^{\pp}, k)_\mm$
 is a $G$-invariant subspace of $S_{\psi', \lambda'}(U^{\pp}, k)_\mm$.  
 We may choose $\psi'= \chi_{\cyc}^{b+2a}\alpha$, where $a$ and $b$ are as above and $\alpha$ is the Teichm\"uller lift of 
 the character $\psi \chi_{\cyc}^{-b -2a} \pmod{\varpi}$ and $\chi_{\cyc}$ is the $p$-adic cyclotomic character and $\lambda'=\otimes_{\iota} (\Sym^b \OO^2\otimes \det^a)$, where the tensor product 
 is taken over all embeddings $\iota: F\hookrightarrow L$, which do not factor through $F_\pp$.
 Note that since $U^\pp_p$ is assumed to be pro-$p$ the character $\psi \chi_{\cyc}^{-b -2a} \pmod{\varpi}$
is trivial on $U^\pp_p \cap (\AfF)^\times$, which implies that $\alpha$ is trivial on $U^\pp_p \cap (\AfF)^\times$, and thus $\lambda'$ has central character $\psi'$. Note that since $\zeta$ is also the central character of $\sigmabar$, we have  $\zeta(x)\equiv x^{b+2a}\pmod{\varpi}$, for all $x\in \Zp^{\times}$. To ease the notation we drop the superscript  $'$ from 
$\psi'$ and $\lambda'$.

Since $S_{\psi, \lambda}(U^\pp, k)_\mm$ is an admissible representation by Lemma \ref{admissible}, the $k$-vector space 
$$\Hom_G(\pi, S_{\psi, \lambda}(U^\pp, k)_\mm)$$ is finite dimensional. Thus we may assume that 
$\pi$ is contained in $S_{\psi, \lambda}(U^\pp, k)[\mm]$. Let $$\sigma^\circ:=\Sym^b \OO^2 \otimes \dt^a, \quad \sigma:=\Sym^b L^2 \otimes \dt^a.$$  Since
$S_{\psi, \lambda}(U^\pp, L/\OO)_\mm$ is  admissible and injective in $\Mod^{\sm}_{K, \zeta}(\OO)$ by Lemmas \ref{admissible}, \ref{injective}, using \eqref{more_duals} we see that 
$\Hom_K(\sigma^{\circ}, S_{\psi, \lambda}(U^\pp, \OO)_{\mm})$ is a free $\OO$-module of finite rank, which is  congruent to $\Hom_K(\sigmabar, S_{\psi, \lambda}(U^\pp, k)_\mm)$ modulo $\varpi$. It follows from \cite[Lemme 6.11]{DS} that after replacing $L$ with a finite extension 
$$\Hom_K(\sigma^{\circ}, S_{\psi, \lambda}(U^\pp, \OO)_{\mm})\otimes_\OO L$$ contains an eigenvector $\phi$ for 
all the Hecke operators in $\TT^{\univ}_S$ with eigenvalues lifting those given by $\mm$. By evaluating $\phi$ we obtain an automorphic form $f$ on $D_0^\times$ such that the associated Galois representation $\rho_f$ lifts $\rhobar$ and its restriction 
to the decomposition group at $\pp$ is crystalline with Hodge--Tate weights $(1-a, -a-b)$, where we adopt the conventions
of \cite{6auth2}, so that the cyclotomic character has Hodge--Tate weight equal to $-1$. We note that the difference of the 
two Hodge--Tate weights is equal to $b+1$, which is between $1$ and $p$.  In particular, $\sigmabar$ is a Serre weight for $\rbar$. The possibilities for the pair $(a,b)$ are listed in the proof of \cite[Lemma 2.15]{6auth2}. The compatibility of local and global Langlands correspondence implies that the
$G$-subrepresentation of $ S_{\psi, \lambda}(U^\pp, \OO)_{\mm}\otimes_\OO L$ generated by the image of $\phi$ is of the form 
$\Psi \otimes \sigma$, where $\Psi$ is a smooth unramified principal series representation. Moreover, one may read off 
the Satake parameters of $\Psi$ from the Weil--Deligne representation associated to $\rho_f|_{G_{F_{\pp}}}$, see \cite[Proposition 2.9]{6auth2}. It follows from \cite{breuil2} that $\lambda$ and $\mu$ are reductions modulo $\varpi$ of 
the Satake parameters of $\Psi$, rescaled by a suitable power of $p$, see the proof of  \cite[Lemma 2.15]{6auth2}.
  It then follows from \cite{bl} and \cite{breuil1}, which describe the irreducible quotients 
of $\cInd_K^G \sigma/ ( T-\lambda, S-\mu)$, that $\pi$ lies in $\BB_{\rbarss}$. One could sum up the last part of the argument as the compatibility of $p$-adic and mod-$p$ Langlands correspondences. 
\end{proof}

Let $R^{\ps}_{\tr\rbar}$ be the universal deformation ring parameterizing $2$-dimensional pseudo-characters of $G_{\Qp}$ lifting $\tr \rbar$.
We have two actions of $R^{\ps}_{\tr\rbar}$ on $S_\psi(U^\pp, L/\OO)_\mm$. The first action is given by the 
composition 
$$\theta_1:R^{\ps}_{\tr \rbar} \rightarrow R^{\univ}_{\rhobar} \rightarrow \T(U^\pp)_\mm\rightarrow \End_G^{\cont}(S_{\psi}(U^{\pp}, \OO)_\mm),$$
where  $R^{\univ}_{\rhobar}$ is the universal global deformation ring of $\rhobar$, and the first arrow is obtained by considering the restriction of the trace of the universal deformation of $\rhobar$ to $G_{F_\pp}$. The second arrow is obtained by associating Galois representations to automorphic forms, see \cite[Proposition 5.7]{scholze}. The third arrow is given by \cite[Corollary 7.3]{scholze}. The second action 
$$ \theta_2: R^{\ps}_{\tr \rbar} \rightarrow \End_G^{\cont}(S_{\psi}(U^{\pp}, \OO)_\mm),$$
is given by interpreting $R^{\ps}_{\tr \rbar}$ as the centre of the category $\Mod^{\ladm}_G(\OO)_{\rbarss}$ using \cite{image}.  In order to apply the results of \cite{image},  if $\rbarss= \chi \oplus \chi \omega$ for some character $\chi:G_{\Qp}\rightarrow k^\times$ then we assume that $p\ge 5$. In \cite{image} we work with a fixed central character, but 
this can be unfixed by using the ideas of \cite[\S 6.5, Cor.\,6.23]{6auth2}.
Since the centre of the category acts naturally on every object in the category, Proposition \ref{right_object} implies that $R^{\ps}_{\tr \rbar}$ acts naturally on $S_\psi(U^\pp, L/\OO)_\mm$  and using \eqref{more_duals} we obtain a natural action of $R^{\ps}_{\tr \rbar}$ on $S_{\psi}(U^{\pp}, \OO)_\mm$, which we denote by  $\theta_2$. 

\begin{prop}\label{equal_actions} Assume that $\psi$ is of finite, prime to $p$ order and $\psi$ is trivial on $U^\pp\cap (\AfF)^\times$. Then the two actions $\theta_1$ and $\theta_2$ of $R^{\ps}_{\tr \rbar}$ on 
$S_{\psi}(U^{\pp}, L/\OO)_\mm$ coincide.
\end{prop}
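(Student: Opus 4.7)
The plan is to reduce the equality of $\theta_1$ and $\theta_2$ to a pointwise check at classical automorphic points via a capture argument, and then verify it there using Colmez's compatibility of the $p$-adic and classical local Langlands correspondences. By the duality \eqref{more_duals} it is equivalent to compare the two resulting $R^{\ps}_{\tr\rbar}$-actions on the compact, $\OO$-torsion free module $S_{\psi}(U^{\pp}, \OO)_\mm$; since both $\theta_i$ are continuous $\OO$-algebra maps into a commutative subring of $\End_G^{\cont}(S_\psi(U^\pp, \OO)_\mm)$, it is enough to check that $\theta_1(r) = \theta_2(r)$ for every $r \in R^{\ps}_{\tr\rbar}$.

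The first step is the capture reduction. By the capture formalism of \cite[\S 2.4]{CDP}, \cite[\S 2.1]{blocksp2}, a continuous $G$-equivariant $\OO$-linear endomorphism of an object of $\dualcat_{G,\zeta}(\OO)_{\rbarss}$ is determined by its effect on the $\OO$-modules $\Hom_K(\sigma^\circ, S_\psi(U^\pp, L/\OO)_\mm)$ as $\sigma^\circ = \Sym^b\OO^2\otimes\dt^a$ ranges over a capturing family of locally algebraic $K$-types in the principal series block. The corresponding $\TT(U^\pp)_\mm$-modules $M(\sigma^\circ)$ are admissible by Lemma \ref{admissible} and, after dualizing, finitely generated and $\OO$-torsion free over $\TT(U^\pp)_\mm$. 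They therefore embed into the product of their localizations at the closed points $x \in \mSpec \TT(U^\pp)_\mm[1/p]$ in their support, and it suffices to verify the equality on the fiber at each such $x$.

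The second step is fiberwise comparison at classical points. For $x$ in the support of some $M(\sigma^\circ)$, the fiber $M(\sigma^\circ)_{\mm_x}[1/p]$ is the $\sigma$-isotypic subspace of a classical locally algebraic $G$-representation $\Pi_x^{\lalg}$ coming from an automorphic form on $D_0^\times$. By Berger--Breuil \cite{bb}, $\Pi_x^{\lalg}$ admits a unique admissible unitary completion $\Pi_x$, which corresponds under the $p$-adic Langlands correspondence to $\rho_x|_{G_{F_\pp}}$; in particular $\cV(\Pi_x) \cong \rho_x|_{G_{F_\pp}}$ with the normalizations of the Notation section. The scalar by which $\theta_1(r)$ acts on this fiber is, by definition of the factorization $R^{\ps}_{\tr\rbar}\to R^{\univ}_{\rhobar}\to \TT(U^\pp)_\mm\to \kappa(x)$ together with \cite[Proposition 5.7]{scholze}, the value of $r$ under the pseudocharacter $\tr \rho_x|_{G_{F_\pp}}$. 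The scalar by which $\theta_2(r)$ acts on the same fiber is, by the identification of $R^{\ps}_{\tr\rbar}$ with the centre of $\dualcat_{G,\zeta}(\OO)_{\rbarss}$ given in \cite{image} and Colmez's compatibility theorem \cite{colmez}, the value of $r$ under $\tr \cV(\Pi_x) = \tr \rho_x|_{G_{F_\pp}}$. The two coincide.

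Combining these steps: the classical points $x$ arising in this way, as $(a,b)$ varies, are Zariski dense in the support of every $M(\sigma^\circ)$ inside $\mSpec \TT(U^\pp)_\mm[1/p]$, so together with $\OO$-torsion freeness this forces $\theta_1(r) = \theta_2(r)$ on each $M(\sigma^\circ)$, and capture yields equality on $S_\psi(U^\pp, \OO)_\mm$. Dualizing back via \eqref{more_duals} concludes the proof. The delicate point is making sure that $\cV(\Pi_x) \cong \rho_x|_{G_{F_\pp}}$ holds on the nose at each classical $x$ with the conventions of this paper, which is exactly where the assumption that $\psi$ has finite prime-to-$p$ order and is trivial on $U^\pp \cap (\AfF)^\times$ is used: it keeps the central character of $\Pi_x$ at $\pp$ compatible with the $\varepsilon$-twist built into $\cV$ and avoids any non-classical twist at the $\pp$-adic place.
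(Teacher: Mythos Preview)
Your overall strategy matches the paper's: reduce via capture to classical automorphic points, then invoke compatibility of $p$-adic and classical local Langlands. The gap is at the fiberwise comparison.

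You assert that the universal unitary completion $\Pi_x$ of $\Pi_x^{\lalg}$ satisfies $\cV(\Pi_x)\cong\rho_x|_{G_{F_\pp}}$. This is false when $\Pi_x$ is ordinary: then $\cV(\Pi_x)$ is a one-dimensional character $\delta$, not a two-dimensional representation. What must be checked is that the \emph{pseudocharacter} attached to $\Pi_x$ by the centre action $\theta_2$ --- which by \cite{image} equals $\delta+\delta^{-1}\zeta\varepsilon^{-1}$ in the ordinary case --- agrees with $\tr(\rho_x|_{G_{F_\pp}})$. The paper carries this out by an explicit comparison of the characters in the parabolic-induction description of $\Pi_x$ with the Weil--Deligne representation of $\rho_x|_{G_{F_\pp}}$, using classical local-global compatibility and \cite[\S4.5]{colmez_tri}. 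Even in the non-ordinary case your one-line appeal to Colmez hides work: one must show $\cV(\Pi_x)$ and $\rho_x|_{G_{F_\pp}}$ have the same Weil--Deligne representation (classical local-global plus Colmez's compatibility) and the same Hodge--Tate weights (read off from the algebraic part), and only then conclude via uniqueness of potentially crystalline representations with prescribed invariants.

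A related issue is the capturing family. With $\sigma^\circ=\Sym^b\OO^2\otimes\det^a$ the smooth part of the locally algebraic representation at $\pp$ is an \emph{unramified} principal series, and the Berger--Breuil input on irreducibility of the universal unitary completion is not uniformly available (reducible smooth part, exceptional cases). The paper instead captures with $V_a=\tau\otimes\Sym^{2a}L^2\otimes\det^{-a}$ and its quadratic twist, where $\tau=\Ind_J^K\chi$ is a ramified principal series type built from a character $\alpha$ of order $p$; this forces the smooth part $\Psi$ to be an irreducible ramified principal series, so that \cite[5.3.4]{bb} and \cite[2.2.1]{be} guarantee absolute irreducibility of the completion in both the non-ordinary and ordinary regimes, and Schur's lemma then pins down the $\theta_2$-eigenvalue. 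Finally, your explanation of the prime-to-$p$ hypothesis on $\psi$ is not quite right: its role is to make $\zeta$ a smooth character of finite order so that the locally algebraic vectors in $\Pi(U^\pp)$ are literally classical automorphic forms, not to repair $\varepsilon$-twists in $\cV$.
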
 
\begin{proof} The result follows from three ingredients: the theory of capture \cite[\S 2.4]{CDP},
\cite[\S 2.1]{blocksp2}, which is based on the ideas of Emerton in \cite{emerton_lg}, the results of Berger--Breuil \cite{bb} on the universal unitary completions of locally algebraic principal series representations and the results of Colmez in \cite{colmez} on the compatibility of the $p$-adic and classical local Langlands correspondence, as we will now explain. 

It follows from \eqref{more_duals} that it is enough to show that the two actions coincide on 
$S_\psi(U^{\pp}, \OO)_\mm$.
Let $\Pi(U^{\pp})$ be the $L$-Banach space representation of $G$ defined by
$$\Pi(U^\pp):=S_\psi(U^{\pp}, \OO)_\mm\otimes_{\OO} L.$$
 Since 
$S_\psi(U^{\pp}, \OO)_\mm$ is $\OO$-torsion free it is enough to show that $\theta_1$ and $\theta_2$ agree on 
$\Pi(U^{\pp})$. Since the Schikhof dual of $S_{\psi}(U^{\pp}, \OO)_\mm$ is isomorphic 
to $S_{\psi}(U^\pp, L/\OO)_{\mm}^{\vee}$, Lemmas \ref{injective} and \ref{admissible} imply that  
$S_{\psi}(U^{\pp}, \OO)_\mm^d$ is 
projective in the category $\Mod^{\pro}_{K, \zeta}(\OO)$ and is finitely generated over $\OO[[K]]$.

Let $\eta:\Zp^\times\rightarrow \OO^\times$ be the non-trivial character of order $2$. Let 
$\alpha: \Zp^\times\rightarrow \OO^\times$ be a character of order $p$. Let $J:=\bigl (\begin{smallmatrix} 
\Zp^{\times} & \Zp \\ p^2 \Zp & \Zp^{\times}\end{smallmatrix}\bigr )$ and let $\chi$ be a character of $J$ defined by 
 $$\chi(\bigl (\begin{smallmatrix} a& b \\  c & d\end{smallmatrix}\bigr )):=\zeta(a)\alpha(a) \alpha(d)^{-1}.$$ 
Let $\tau:= \Ind_J^K \chi$, and let $V_a:= \tau \otimes_L \Sym^{2a} L^2 \otimes \det^{-a}$, $a\ge 0$. 
Note that the central character of $V_a$ is equal to $\zeta$. 
It follows from the proof of Proposition 2.7 in \cite{blocksp2} that if $\varphi$ is a continuous $K$-equivariant endomorphism of $\Pi(U^{\pp})$, which kills 
$$\Hom_K(V_a, \Pi(U^{\pp})) \quad \text{and}\quad \Hom_K(V_a\otimes \eta\circ \det, \Pi(U^\pp))$$
for all $a\ge 0$,  then $\varphi$ is zero. Thus it is enough to show that the two actions of 
$R^{\ps}_{\tr \rbar}$ on the above modules coincide, so that $\theta_1(r)-\theta_2(r)$ 
annihilates them for all $r\in R^{\ps}_{\tr \rbar}$ and all $a\ge 0$. 

In the following let $V= V_{\sm} \otimes V_{\alg}$, 
where $V_{\alg}= \Sym^{2a} L^2\otimes \det^{-a}$, and $V_{\sm}= \tau$ or $\tau\otimes \eta\circ \det$.
Since $V$ is a locally algebraic representation of $K$, we have $\Hom_K (V, \Pi(U^{\pp}))= \Hom_K(V, \Pi(U^{\pp})^{\lalg})$, 
where $\Pi(U^\pp)^{\lalg}$ is the subspace of locally algebraic vectors in $\Pi(U^\pp)$.
This subspace can be identified with a subspace of classical automorphic forms on $D_0^{\times}$, see \cite[Lemma 1.3]{taylor_deg2}, \cite[\S 3.1.14]{kisin_annals}, \cite[\S 3]{interpolate}. In particular, the action of $\T(U^p)_\mm[1/p]$ on $\Pi(U^\pp)^{\lalg}$, and hence on  $\Hom_K(V, \Pi(U^\pp))$, is semi-simple. Since $S_{\psi}(U^{\pp}, \OO)^d_\mm$
is finitely generated as an $\OO[[K]]$-module, the vector space   $\Hom_K(V, \Pi(U^\pp))$ is finite dimensional. Thus for a fixed $V$, after replacing $L$ by a finite extension, we may assume that 
$\Hom_K(V, \Pi(U^\pp))$ has a basis of eigenvectors for the action of $\T(U^\pp)_\mm[1/p]$. 

Let $\phi\in\Hom_K(V, \Pi(U^\pp))$ be such an eigenvector. Then it is enough to show that $\phi$ is an eigenvector
 for the action $R^{\ps}_{\tr \rbar}$ via $\theta_2$ and that the annihilators of $\phi$ for the  two actions coincide, since then
 $\theta_1(r)-\theta_2(r)$ will kill $\phi$ for all $r\in R^{\ps}_{\tr \rbar}$.
 
Let $x\in \mSpec R^{\ps}_{\tr \rbar}[1/p]$ be the kernel for the action $R^{\ps}_{\tr \rbar}[1/p]$ on $\phi$ via $\theta_1$. Then by unravelling the definition 
of $\theta_1$ we get that $x$ corresponds to the pseudo-character $\tr (\rho_f |_{G_{F_\pp}})$, where $\rho_f$ is the Galois 
representation  attached to the automorphic form $f$, corresponding to the Hecke eigenvalues given by the action of 
$\T(U^p)_\mm[1/p]$ on $\phi$.  Moreover, by the same argument as in the proof of Proposition \ref{right_object},  $\rho_f |_{G_{F_\pp}}$, is potentially semistable with Hodge--Tate weights $(1+a, -a)$. As in the proof of Proposition \ref{right_object}
the $G$-subrepresentation of $\Pi(U^\pp)$ generated by the image of $\phi$ is of the form $\Psi\otimes V_{\alg}$, where $\Psi$ is an irreducible 
smooth  representation with $\Hom_K(V_{\sm}, \Psi)\neq 0$. The theory of types, see \cite{henniart}, implies 
that $\Psi\cong(\Ind_B^G \psi_1\otimes \psi_2 | \cdot |^{-1})_{\sm}$, where
$\psi_1, \psi_2:\Qp^\times \rightarrow L^\times$ are smooth characters, such that if $V_{\sm}=\tau$ then 
$\psi_1|_{\Zp^\times}=\zeta \alpha$, $\psi_2|_{\Zp^{\times}}= \alpha^{-1}$
and if $V_{\sm}=\tau\otimes \eta\circ \det$ then $\psi_1|_{\Zp^\times}=\zeta \alpha\eta$, $\psi_2|_{\Zp^{\times}}= \alpha^{-1}\eta$. Hence, $\rho_f |_{G_{F_\pp}}$ is potentially crystalline.
Moreover, $\Psi$ determines the Weil--Deligne representation 
associated to  $\rho_f |_{G_{F_\pp}}$ via the classical local Langlands correspondence.

The universal unitary completion of $\Psi \otimes V_{\alg}$ is absolutely irreducible, 
 by \cite[5.3.4]{bb}, \cite[2.2.1]{be}, and will coincide with the closure 
of $\Psi\otimes V_\alg$ in $\Pi(U^{\pp})$, which we denote by $\Pi$. The action of 
$R^{\ps}_{\tr \rbar}[1/p]$ on $\Pi(U^\pp)$ via $\theta_2$ preserves $\Pi$, since it acts as the center of the category. Schur's lemma implies that the annihilator of $\Pi$ is a maximal ideal of  $R^{\ps}_{\tr \rbar}[1/p]$, which we denote by $y$. Note that we have shown that the action of $R^{\ps}_{\tr \rbar}[1/p]$ via $\theta_2$ preserves $\phi$ and 
the annihilator is equal to $y$. So we have to show that $x=y$.

If $\Pi$ is non-ordinary then $r:=\cV(\Pi)$ is 
an absolutely irreducible  two dimensional potentially crystalline representation of $G_{\Qp}$ lifting $\rbar$ and it follows from 
\cite[Proposition 11.3]{image} that $y$ corresponds to $\tr r$. The compatibility of $p$-adic and 
classical Langlands correspondences, proved by Colmez in  \cite{colmez},  implies that $r$ and $\rho_f |_{G_{F_\pp}}$ have the same Weil--Deligne representations. Moreover, the Hodge--Tate weights of $r$ are determined by $V_{\alg}$ and are equal to those of $\rho_f$.
Since the Hodge--Tate weights of $r$ and $\rho_f |_{G_{F_\pp}}$ are equal, it follows from \cite[\S 4.5]{colmez_tri} that 
$r\cong \rho_f|_{G_{F_\pp}}$ and and so $x=y$. 

If $\Pi$ is ordinary then $\cV(\Pi)$ is one dimensional, and we denote the character by $\delta$. It follows from Corollaries 8.15, 9.37, Proposition 10.107 in 
\cite{image} that $y$ corresponds to the pseudo-character $\delta \oplus \delta^{-1} \zeta \varepsilon^{-1}$. Since $\Psi$ is irreducible we may assume that  $\val(\psi_1(p))\ge \val(\psi_2(p))$, since interchanging the characters will give an isomorphic representation. Then it follows from \cite[Lemma 12.5]{image} that $\Pi$ is isomorphic to a parabolic induction of a unitary character 
$\gamma_1\otimes\gamma_2$, such that $\gamma_1(x)= \psi_1(x) x^{-a}$, $\gamma_2(x)= \psi_2(x) |x|^{-1} x^{a}$ for all $x\in \Qp^{\times}$. It follows from the definition of $\cV$  that $\cV(\Pi)\cong \gamma_1 \varepsilon^{-1}$, so that 
$\delta \oplus \delta^{-1} \zeta \varepsilon^{-1}= \gamma_1 \varepsilon^{-1} \oplus \gamma_2$, where for the ease of 
notation  we use local class field theory to identify characters of $G_{\Qp}$ with unitary characters of $\Qp^\times$.  Note that to match the conventions of \cite{6auth2} we twist the definition of 
$\cV$ in \cite[\S 5.7]{image} by the inverse of the cyclotomic character. The cyclotomic character $\varepsilon$ corresponds to the character $x\mapsto x |x|$ via local class field theory. 
We have that $\gamma_1\varepsilon^{-1}=\psi_1|\cdot|^{a} \varepsilon^{-a-1}$ is crystalline with Hodge--Tate weight $a+1$ and Weil--Deligne representation
$\psi_1 |\cdot|^{-1}$, simirlarly $\gamma_2$ is crystalline with Hodge--Tate weight $-a$ and Weil--Deligne 
representation equal to $\psi_2|\cdot|^{-1}$. The Weil--Deligne representation attached to $\rho_f$ is the Weil--Deligne  
representation attached to $\Psi$ by the classical Langlands correspondence, which is equal to $\psi_1|\cdot|^{-1}\oplus 
\psi_2|\cdot|^{-1}$, see the calculation in the proof of Proposition 2.9 in \cite{6auth2}. It follows from 
 \cite[\S 4.5]{colmez_tri} that $r$ and $\rho_f|_{G_{F_\pp}}$ have the same semi-simplification. Hence 
 $x=y$.
\end{proof}

\begin{cor}\label{actions_coincide} The two actions  of $R^{\ps}_{\tr \rbar}$ on 
$S_{\psi}(U^{p}, L/\OO)_\mm$ and $S_{\psi, \lambda}(U^\pp, L/\OO)_\mm$ via $\theta_1$ and $\theta_2$ coincide.
\end{cor}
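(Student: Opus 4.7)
The plan is to reduce the corollary to Proposition \ref{equal_actions} by two natural reductions: from the tame level $U^p$ down to finite levels $V^\pp$ where the proposition applies, and from a non-trivial weight $\lambda$ to the trivial case via a presentation of $\lambda$. The structural input that makes both reductions work is that $\theta_1$ and $\theta_2$ are both natural transformations in a precise sense. On one hand, $\theta_1$ is built from Hecke operators at places outside $S$, so it commutes with every $G$-equivariant, $\TT^{\univ}_S$-equivariant map between the modules under consideration. On the other hand, $\theta_2$ is defined via the realisation of $R^{\ps}_{\tr\rbar}$ as the centre of the category $\Mod^{\ladm}_{G,\zeta}(\OO)_{\rbarss}$, so $\theta_2(r)$ is, by construction, a natural endomorphism of the identity functor on that category and thus commutes with every morphism in it; the modules appearing below all lie in this category by Proposition \ref{right_object}.

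First, to handle $S_{\psi}(U^p, L/\OO)_\mm$, I use \eqref{beijing_5}, which presents $S_{\psi}(U^p, L/\OO)$ as the direct limit $\varinjlim_{V^\pp_p} S_{\psi}(V^\pp, L/\OO)$ over open subgroups $V^\pp_p \subseteq U^\pp_p$, with transition maps the natural inclusions. Localisation at $\mm$ commutes with direct limits. Continuity of $\psi$ ensures that for $V^\pp_p$ sufficiently small the hypothesis of Proposition \ref{equal_actions} that $\psi$ be trivial on $V^\pp \cap (\AfF)^\times$ is automatic, so the proposition gives $\theta_1 = \theta_2$ on each $S_{\psi}(V^\pp, L/\OO)_\mm$ in a cofinal subsystem. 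By the naturality discussed above both actions intertwine the inclusions, so the equality passes to the colimit $S_{\psi}(U^p, L/\OO)_\mm$.

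Second, to handle $S_{\psi,\lambda}(U^\pp, L/\OO)_\mm$, I invoke the exact sequence \eqref{beijing_3} induced by a free $\OO[[U^\pp_p]]$-presentation $\OO[[U^\pp_p]]^{\oplus n} \to \OO[[U^\pp_p]]^{\oplus m} \twoheadrightarrow \lambda$, namely
\[
0 \longrightarrow S_{\psi,\lambda}(U^\pp, L/\OO)_\mm \longrightarrow S_{\psi}(U^p, L/\OO)_\mm^{\oplus m} \longrightarrow S_{\psi}(U^p, L/\OO)_\mm^{\oplus n},
\]
obtained after localisation at $\mm$ (which is exact). The maps are $G$-equivariant and $\TT^{\univ}_S$-equivariant. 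The first step gives $\theta_1 = \theta_2$ on $S_{\psi}(U^p, L/\OO)_\mm$ and therefore on the two finite direct sums on the right. Since the injection on the left intertwines both $\theta_1$ and $\theta_2$, the equality of actions restricts to $S_{\psi,\lambda}(U^\pp, L/\OO)_\mm$, as desired.

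I do not foresee a substantive obstacle: the corollary is essentially a formal consequence of Proposition \ref{equal_actions}. The only point that requires some care is confirming that $\theta_2$ is compatible with the morphisms appearing in \eqref{beijing_5} and \eqref{beijing_3}, but this is built into the definition, because Proposition \ref{right_object} places every representation appearing in either diagram inside $\Mod^{\ladm}_{G,\zeta}(\OO)_{\rbarss}$, on which $R^{\ps}_{\tr\rbar}$ acts as the centre.
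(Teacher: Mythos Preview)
There is a genuine gap in your first reduction. Proposition \ref{equal_actions} has \emph{two} hypotheses on $\psi$: it must be trivial on $U^{\pp}\cap(\AfF)^{\times}$, and it must be of \emph{finite, prime-to-$p$ order}. You only address the first, and your argument for it is already incorrect: continuity of $\psi$ into $\OO^{\times}$ does not force triviality on small open subgroups, because $\OO^{\times}$ is not discrete. A character such as (a component of) $\chi_{\cyc}$ is nontrivial on every open subgroup of $\Zp^{\times}$. More importantly, shrinking the level $V^{\pp}_p$ cannot possibly make $\psi$ acquire finite prime-to-$p$ order, so Proposition \ref{equal_actions} is simply not available for a general $\psi$.

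This is exactly why the paper's proof proceeds differently: it first twists. One observes that $f\mapsto f\cdot(\chi\circ\det)$ induces a $\TT(U^p)_{\mm}[G]$-isomorphism $S_{\psi}(U^p,L/\OO)\otimes(\chi\circ\det)\cong S_{\psi\chi^2}(U^p,L/\OO)$ for any $\chi$ trivial on $U^p\cap(\AfF)^{\times}$ and trivial mod $\varpi$, and that both $\theta_1$ and $\theta_2$ intertwine this isomorphism via the automorphism $\mathrm{tw}_{\chi}$ of $R^{\ps}_{\tr\rbar}$. Taking $\chi=\sqrt{\psi^{-1}[\bar{\psi}]}$ (a square root in $1+\pp$) replaces $\psi$ by the Teichm\"uller lift $[\bar{\psi}]$, which \emph{does} have finite prime-to-$p$ order; now Proposition \ref{equal_actions} applies to $S_{[\bar{\psi}]}(V^{\pp},L/\OO)_{\mm}$ for all pro-$p$ $V^{\pp}_p$, one passes to the colimit, and twists back. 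Your second step, deducing the $\lambda$-case from the $U^p$-case via the exact sequence \eqref{beijing_3}, is fine and is how one completes the argument for $S_{\psi,\lambda}(U^{\pp},L/\OO)_{\mm}$.
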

\begin{proof} If $\chi: (\AfF)^\times/F^{\times}\rightarrow \OO^{\times}$ is a continuous character
trivial on $U^p\cap (\AfF)^\times$ and trivial modulo $\varpi$ then the map $f\mapsto [g\mapsto f(g) \chi(\det (g))]$ induces an isomorphism of $\TT(U^p)_{\mm}[G]$-modules
$$ S_{\psi}(U^p, L/\OO)\otimes \chi\circ \det \overset{\cong}{\longrightarrow} S_{\psi\chi^2}(U^p, L/\OO).$$
This  induces an isomorphism between the $G$-endomorphism rings
$$\varphi:\End_G( S_{\psi}(U^p, L/\OO))\overset{\cong}{\longrightarrow} \End_G( S_{\psi\chi^2}(U^p, L/\OO)).$$ 
Moreover, for $i=1, 2$ we have $\phi\circ \theta_i= \theta_i\circ \mathrm{tw}_{\chi}$, where
$ \mathrm{tw}_{\chi}$ is the automorphism of   $R^{\ps}_{\tr \rbar}$ obtained by sending a deformation 
to its twist by $\chi$.  Thus if we prove the assertion for $\psi$ then we may deduce the assertion for
$\psi\chi^2$ for all $\chi$ as above. 

Let $\bar{\psi}$ be the reduction of $\psi$ modulo $\varpi$ and let $[\bar{\psi}]$ be the Teichm\"uller lift of $\bar{\psi}$, then the character 
$\psi^{-1} [\bar{\psi}]$ takes values in $1+\pp$ and thus we may take its square root by the usual binomial formula. If $\chi:= \sqrt{\psi^{-1} [\bar{\psi}]}$ then $\psi \chi^2= [\bar{\psi}]$ has order prime to 
$p$. Proposition \ref{equal_actions} imply that for all open pro-$p$ subgroups 
$V^\pp_p$ of $U^\pp_p$ the two actions on $S_{[\bar{\psi}]}(U^p, L/\OO)^{V^\pp_p}_{\mm}$ coincide. By passing to the direct limit we obtain that the two actions on $S_{[\bar{\psi}]}(U^p, L/\OO)$ coincide, by twisting we obtain the same result with $\psi$ instead of $[\bar{\psi}]$.
\end{proof}

\begin{cor} Let $x\in \mSpec \TT(U^p)_{\mm}[1/p]$ be such that the restriction of the corresponding 
Galois representation $\rho_x: G_{F,S}\rightarrow \GL_2(\kappa(x))$ to $G_{F_{\pp}}$ is irreducible. 
Then there is an isomorphism 
of $L$-Banach space representations of $G$: 
\begin{equation}\label{central}
(S_{\psi, \lambda}(U^{\pp}, \OO)\otimes_{\OO} L)[\mm_x]\cong \Pi^{\oplus n}
\end{equation}
for some integer $n\ge 0$, where $\Pi\in \Ban^{\adm}_{G}(L)$ corresponds to $\rho_x|_{G_{F_{\pp}}}$
via the $p$-adic local Langlands correspondence for $\GL_2(\Qp)$.
\end{cor}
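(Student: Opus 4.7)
The plan is to use Corollary \ref{actions_coincide} to translate the $\mm_x$-torsion condition into the vanishing at a specific point of the center of the block, and then to invoke the $p$-adic Langlands correspondence in the form developed in \cite{image}. Set $V := S_{\psi,\lambda}(U^\pp, \OO)_\mm \otimes_\OO L$, which is an admissible Banach space representation of $G$ by Lemma \ref{admissible}, lying in $\Ban^{\adm}_G(L)_{\rbarss}$ by Proposition \ref{right_object}. Let $W := V[\mm_x]$, a closed $G$-stable subspace and hence itself admissible, and let $y := \theta_1^{-1}(\mm_x) \subset R^{\ps}_{\tr\rbar}[1/p]$. Because the residue field at $y$ embeds into $\kappa(x)$, $y$ is a maximal ideal, corresponding by the definition of $\theta_1$ to the pseudo-character $\tr(\rho_x|_{G_{F_\pp}})$. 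Corollary \ref{actions_coincide} then implies that the $\theta_2$-action of $R^{\ps}_{\tr\rbar}$ on $V$ factors through its $\theta_1$-action, so the center of $\Ban^{\adm}_G(L)_{\rbarss}$ acts on $W$ through $\kappa(y)$.

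I would next show that every absolutely irreducible subquotient of $W$ is isomorphic to $\Pi$. Any such subquotient $\Pi'$ lies in $\Ban^{\adm}_G(L)_{\rbarss}$ and is killed by $\mm_y$ via $\theta_2$. If $\Pi'$ were ordinary, then $\cV(\Pi')$ would be a character and the pseudo-character controlling its $\theta_2$-support would be reducible, contradicting the irreducibility of $\tr(\rho_x|_{G_{F_\pp}})$. Hence $\Pi'$ is non-ordinary, $\cV(\Pi')$ is a two-dimensional $G_{\Qp}$-representation with the same trace as $\rho_x|_{G_{F_\pp}}$, and by absolute irreducibility $\cV(\Pi') \cong \rho_x|_{G_{F_\pp}} \cong \cV(\Pi)$; the $p$-adic Langlands correspondence then gives $\Pi' \cong \Pi$.

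Finally, I would upgrade this to $W \cong \Pi^{\oplus n}$ using deformation theory. Since $\rho_x|_{G_{F_\pp}}$ is absolutely irreducible, the $y$-adic completion of $R^{\ps}_{\tr\rbar}[1/p]$ pro-represents deformations of $\rho_x|_{G_{F_\pp}}$ as a $\kappa(y)$-representation, so any such deformation killed by $\mm_y$ is a direct sum of copies of $\rho_x|_{G_{F_\pp}}$. Translating through the identification from \cite{image} of the non-ordinary part of $\Ban^{\adm}_G(L)_{\rbarss}$ with the corresponding Galois-theoretic data via $\cV$, any finite-length object of the block whose center acts through $\kappa(y)$ and whose simple constituents are all $\Pi$ must be a direct sum of copies of $\Pi$. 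Admissibility of $W$ forces $n := \dim_{\kappa(y)} \Hom_G(\Pi, W)$ to be finite, which yields the asserted isomorphism. The main obstacle is this last step: one must quote the precise Morita-type statements of \cite{image} to reduce the semisimplicity of $W$ as a Banach space representation to the triviality of a deformation of $\rho_x|_{G_{F_\pp}}$ killed by $\mm_y$.
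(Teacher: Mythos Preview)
Your approach is essentially the same as the paper's: reduce to the $\theta_2$-action via Corollary \ref{actions_coincide}, identify the point $y$ of $R^{\ps}_{\tr\rbar}[1/p]$ as the trace of $\rho_x|_{G_{F_\pp}}$, and then invoke the structure results of \cite{image} (the paper cites Corollaries 6.8, 8.14, 9.36 and Proposition 10.107 there) to conclude that any object in the block annihilated by $\mm_y$ decomposes as $\Pi^{\oplus n}$. Your middle paragraph, ruling out ordinary subquotients and matching traces through $\cV$, is a correct unpacking of what those citations encode.

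There is one genuine gap. In your final step you state the semisimplicity result only for \emph{finite-length} objects of the block, and then jump to the conclusion using only that $\dim_{\kappa(y)}\Hom_G(\Pi,W)$ is finite by admissibility. Finite socle multiplicity does not by itself give finite length of $W$, and without finite length your Morita/deformation argument does not apply as stated. The paper closes this gap with Lemma \ref{fibre_finite}: since $S_{\psi,\lambda}(U^\pp,L/\OO)_\mm$ is admissible, the fibre $k\otimes_{R^{\ps}_{\tr\rbar}}S_{\psi,\lambda}(U^\pp,\OO)_\mm^d$ is of finite length in $\dualcat(\OO)$, and the same argument (applied at the maximal ideal $\mm_y$ after inverting $p$) shows that $W$ has finite length as a Banach space representation. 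Once you insert this step, your argument is complete and matches the paper's.
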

\begin{proof} Let $y: R^{\ps}_{\tr \rbar}\rightarrow L$ be the homomorphism corresponding to 
$\tr( \rho_x|_{G_{F_{\pp}}})$. It follows from Corollaries 6.8, 8.14, 9.36 and Proposition 10.107 in \cite{image} that \eqref{central} holds for any $\Pi'\in \Ban^{\adm}_G(L)_{\rbarss}$ of finite length, 
on which $R^{\ps}_{\tr \rbar}$ acts as the center of the category via $y$. Corollary \ref{actions_coincide} and Lemma \ref{fibre_finite} below 
 imply that we may apply this observation to $\Pi'=(S_{\psi, \lambda}(U^{\pp}, \OO)\otimes_{\OO} L)[\mm_x]$.
\end{proof}

\begin{lem}\label{fibre_finite} We let $R^\ps_{\tr \rbar}$ act on $\tau\in \Mod^{\ladm}_G(\OO)_{\rbarss}$ as the centre
of the category $\Mod^{\ladm}_G(\OO)_{\rbarss}$. If the $G$-socle of $\tau$ is of finite length then $k \otimes_{R^\ps_{\tr \rbar}} \tau^{\vee}$ is of finite length in $\dualcat(\OO)$. In particular, the assertion holds for any admissible representation $\tau$ in $\Mod^{\ladm}_G(\OO)_{\rbarss}$.
\end{lem}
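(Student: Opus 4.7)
The plan is to pass to Pontryagin duals and majorize $\tau^\vee$ by a finite direct sum of projective envelopes. Since $\soc_G \tau$ has finite length and $\Mod^{\ladm}_G(\OO)_{\rbarss}$ has enough injectives, $\tau$ is an essential extension of its socle and hence embeds into the injective envelope $\bigoplus_{i=1}^n \widetilde{J}_{\pi_i}$ of $\soc_G \tau$ in $\Mod^{\ladm}_G(\OO)_{\rbarss}$, where the (not necessarily distinct) $\pi_i\in\BB_{\rbarss}$ are the simple summands of $\soc_G\tau$. Pontryagin duality then yields a surjection $\bigoplus_{i=1}^n \wP_{\pi_i^\vee}\twoheadrightarrow \tau^\vee$ in $\dualcat(\OO)_{\rbarss}$, where $\wP_{\pi^\vee}:=\widetilde{J}_\pi^\vee$ is the projective envelope of $\pi^\vee$.

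Applying the right exact functor $k\otimes_{R^{\ps}_{\tr\rbar}} -$ reduces the assertion to showing that each $k\otimes_{R^{\ps}_{\tr\rbar}} \wP_{\pi^\vee}$ is of finite length in $\dualcat(\OO)$. For this I would invoke the block-by-block description of $\dualcat(\OO)_{\rbarss}$ from \cite{image}: the endomorphism ring $\wE_\pi:=\End_{\dualcat(\OO)}(\wP_{\pi^\vee})$ is a finitely generated $R^{\ps}_{\tr\rbar}$-module via the action of $R^{\ps}_{\tr\rbar}$ through its centre, and the appropriate sub-block of $\dualcat(\OO)_{\rbarss}$ is equivalent to the category of pseudocompact right $\wE_\pi$-modules, sending $\wP_{\pi^\vee}$ to $\wE_\pi$ itself. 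Under this equivalence $k\otimes_{R^{\ps}_{\tr\rbar}} \wP_{\pi^\vee}$ corresponds to the finite-dimensional $k$-algebra $k\otimes_{R^{\ps}_{\tr\rbar}}\wE_\pi$ viewed as a module over itself, which plainly has finite length. This invocation of the structural results of \cite{image} is the main obstacle, since those results are proved case-by-case depending on the type of $\rbarss$ and rest on the $\Ext^1$-calculations of loc.\ cit., with the additional assumption $p\geq 5$ in the exceptional case $\rbarss=\chi\oplus\chi\omega$ (combined with the central character argument of \cite[\S 6.5]{6auth2} since \cite{image} works with a fixed central character).

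For the final assertion, if $\tau\in \Mod^{\ladm}_G(\OO)_{\rbarss}$ is admissible then $\soc_G\tau$ has finite length: the block $\BB_{\rbarss}$ contains only finitely many isomorphism classes of irreducibles, so it suffices to show that $\Hom_G(\pi,\tau)$ is finite dimensional over $k$ for each simple $\pi\in\BB_{\rbarss}$. Choosing a compact open subgroup $K$ with $\pi^K\neq 0$ and using the injection $\Hom_G(\pi,\tau)\hookrightarrow \Hom_K(\pi^K,\tau^K)$, this follows from the admissibility of $\tau$, since $\pi^K$ is finite dimensional and $\tau^K$ is of finite length over $\OO$.
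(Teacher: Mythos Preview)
Your overall strategy matches the paper's: embed $\tau$ into the injective envelope of its socle, dualize to get a surjection from a finite sum of projective envelopes onto $\tau^\vee$, and reduce to showing that $k\otimes_{R^{\ps}_{\tr\rbar}} P$ has finite length for each projective envelope $P$. The admissibility argument at the end is also fine.

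The gap is in your equivalence-of-categories step. You claim that ``the appropriate sub-block of $\dualcat(\OO)_{\rbarss}$ is equivalent to the category of pseudocompact right $\wE_\pi$-modules,'' but $\dualcat(\OO)_{\rbarss}$ is already an indecomposable block and in general contains several non-isomorphic irreducibles (for instance two of them when $\rbarss=\chi_1\oplus\chi_2$ with $\chi_1\chi_2^{-1}\neq 1,\omega^{\pm1}$, the case of primary interest in the paper). A single $\wP_{\pi^\vee}$ is then \emph{not} a projective generator --- the irreducible $\pi'^{\vee}$ with $\pi'\not\cong\pi$ has no map from $\wP_{\pi^\vee}$ --- so the block is not equivalent to $\wE_\pi$-modules, and your correspondence $k\otimes_{R^{\ps}_{\tr\rbar}} \wP_{\pi^\vee} \leftrightarrow k\otimes_{R^{\ps}_{\tr\rbar}}\wE_\pi$ does not by itself control length in $\dualcat(\OO)$.

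The fix is exactly what the paper does: take $P=\bigoplus_i P_i$ over representatives $\pi_i$ of all of $\BB_{\rbarss}$, so that $P$ \emph{is} a projective generator. Then $k\otimes_{R^{\ps}_{\tr\rbar}}P_i$ has finite length in $\dualcat(\OO)$ if and only if $\Hom_{\dualcat(\OO)}(P_j, k\otimes_{R^{\ps}_{\tr\rbar}}P_i)$ is a finite-dimensional $k$-vector space for every $j$ (this is \cite[Lem.\,3.3]{image}); by projectivity of $P_j$ this equals $k\otimes_{R^{\ps}_{\tr\rbar}}\Hom_{\dualcat(\OO)}(P_j,P_i)$, and the structural input from \cite{image} you already cite is precisely that each $\Hom_{\dualcat(\OO)}(P_j,P_i)$ is a finitely generated $R^{\ps}_{\tr\rbar}$-module. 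So the same ingredients suffice once you work with the full generator rather than a single $\wP_{\pi^\vee}$.
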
 
\begin{proof} There are only finitely many isomorphism classes of irreducible objects in $\BB_{\rbarss}$. Let 
$\pi_1, \ldots, \pi_k$ be a set of representatives. For each $i$ let $\pi_i\hookrightarrow J_i$ be an injective 
envelope of $\pi_i$ in $\Mod^{\ladm}_G(\OO)$.  Dually we let $P_i:=J_i^{\vee}$ so that $P_i \twoheadrightarrow \pi_i^{\vee}$ is a projective
envelope of $\pi_i^{\vee}$ in $\dualcat(\OO)$. Since the category $\Mod^{\ladm}_G(\OO)_{\rbarss}$
 is locally finite we may embed $\tau$
into an injective envelope of its $G$-socle, which is isomorphic to a direct sum of finitely many copies of $J_i$. 
This implies that dually there is a surjection $\oplus_{i=1}^k P_i^{\oplus m_i}\twoheadrightarrow \tau^{\vee}$, 
where $m_i$ are finite multiplicities. Thus it is enough to prove the statement for $\tau=J_i$, $1\le i \le k$. 
It follows from \cite[Lemma 3.3]{image} that $k \otimes_{R^\ps_{\tr \rbar}} P_i$ is of finite length if and only if 
$\Hom_{\dualcat(\OO)}(P_j, k \otimes_{R^\ps_{\tr \rbar}} P_i)$ is a finite dimensional $k$-vector space for $1\le j\le k$. 
Since $P_j$ is projective the natural map
$$k \otimes_{R^\ps_{\tr \rbar}} \Hom_{\dualcat(\OO)}(P_j, P_i)\rightarrow \Hom_{\dualcat(\OO)}(P_j, k \otimes_{R^\ps_{\tr \rbar}} P_i)$$
is an isomorphism. Thus it is enough to show that $\Hom_{\dualcat(\OO)}(P_j, P_i)$ is a finitely generated $R^\ps_{\tr \rbar}$-module for $1\le i,j\le k$. This assertion follows from Proposition 6.3, Corollary 8.11 in conjuction with 
Proposition B.26, Corollaries 9.25 and 9.27, Lemma 10.90 in \cite{image}.
\end{proof}

\begin{lem}\label{finite_fibre_2} 
Let $\tau \in  \Mod^{\ladm}_G(\OO)_{\rbarss}$ and suppose that we are given a faithful action on $\tau$ of a local $R^{\ps}_{\tr \rbar}$-algebra $R$ with residue field $k$ via $R\hookrightarrow \End_G(\tau)$. Assume 
that the composition $R^{\ps}_{\tr \rbar}\rightarrow R \rightarrow \End_G(\tau)$ coincides 
with the action of $R^{\ps}_{\tr \rbar}$ on $\tau$ as the centre of category $\Mod^{\ladm}_G(\OO)_{\rbarss}$.
If the $G$-socle of $\tau$ is  of finite length then $R$ is finite over $R^{\ps}_{\tr \rbar}$.
\end{lem}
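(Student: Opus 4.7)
The plan is to reduce, via Pontryagin duality, to showing that $\End_{\dualcat(\OO)}(\tau^{\vee})$ is finitely generated as an $R^{\ps}_{\tr\rbar}$-module, since then the $R^{\ps}_{\tr\rbar}$-subalgebra $R \hookrightarrow \End_G(\tau) \cong \End_{\dualcat(\OO)}(\tau^{\vee})$ is automatically finite over $R^{\ps}_{\tr\rbar}$, the ring $R^{\ps}_{\tr\rbar}$ being noetherian. The finite-generation statement for the endomorphism ring will be extracted from the proof of Lemma~\ref{fibre_finite}, where essentially the same Hom computations are carried out.

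First I would repeat the first paragraph of the proof of Lemma~\ref{fibre_finite}: let $\pi_1,\ldots,\pi_k$ represent the isomorphism classes of irreducible objects of $\BB_{\rbarss}$, let $\pi_i\hookrightarrow J_i$ be injective envelopes in $\Mod^{\ladm}_G(\OO)_{\rbarss}$, and set $P_i:=J_i^{\vee}$, so each $P_i \twoheadrightarrow \pi_i^{\vee}$ is a projective envelope in $\dualcat_G(\OO)_{\rbarss}$. Since the $G$-socle of $\tau$ is of finite length, local finiteness of $\Mod^{\ladm}_G(\OO)_{\rbarss}$ embeds $\tau$ into an injective envelope of the form $\bigoplus_{i=1}^k J_i^{\oplus m_i}$ for some finite multiplicities $m_i$, and dualising gives a surjection $P:=\bigoplus_{i=1}^k P_i^{\oplus m_i}\twoheadrightarrow \tau^{\vee}$ in $\dualcat(\OO)$.

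Next, since each $P_j$ is projective in $\dualcat(\OO)$, applying $\Hom_{\dualcat(\OO)}(P_j,-)$ to $P\twoheadrightarrow \tau^{\vee}$ yields a surjection of $R^{\ps}_{\tr\rbar}$-modules
\[
\Hom_{\dualcat(\OO)}(P_j,P)\;=\;\bigoplus_{i=1}^k \Hom_{\dualcat(\OO)}(P_j,P_i)^{\oplus m_i}\;\twoheadrightarrow\;\Hom_{\dualcat(\OO)}(P_j,\tau^{\vee}).
\]
The summands on the left are finitely generated over $R^{\ps}_{\tr\rbar}$ by exactly the references invoked at the end of the proof of Lemma~\ref{fibre_finite} (Proposition~6.3, Corollary~8.11, Proposition~B.26, Corollaries~9.25 and~9.27, Lemma~10.90 of \cite{image}, in conjunction with the unfixing of the central character as in \cite[\S 6.5]{6auth2}). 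Hence so is $\Hom_{\dualcat(\OO)}(P_j,\tau^{\vee})$, by noetherianness of $R^{\ps}_{\tr\rbar}$.

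Finally, composing with the surjection $P \twoheadrightarrow \tau^{\vee}$ gives an $R^{\ps}_{\tr\rbar}$-linear injection
\[
\End_{\dualcat(\OO)}(\tau^{\vee})\;\hookrightarrow\;\Hom_{\dualcat(\OO)}(P,\tau^{\vee})\;=\;\bigoplus_{i=1}^k \Hom_{\dualcat(\OO)}(P_i,\tau^{\vee})^{\oplus m_i},
\]
the injectivity being clear since $P\twoheadrightarrow \tau^{\vee}$ has dense image. The right-hand side is finitely generated over $R^{\ps}_{\tr\rbar}$ by the previous step, so $\End_{\dualcat(\OO)}(\tau^{\vee})$ is as well, and $R$ inherits this property. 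The only non-routine ingredient is the finiteness of the Hom modules between the $P_i$, which is precisely the content of Lemma~\ref{fibre_finite}; the remaining observation is simply that Hom-into-a-projective-quotient is itself finitely generated, which then passes to the endomorphism ring of the quotient.
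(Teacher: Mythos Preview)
Your proof is correct and follows essentially the same route as the paper's. Both arguments embed $R$ into (a finite sum of copies of) $\Hom_{\dualcat(\OO)}(P,\tau^{\vee})$ and then show this module is finitely generated over $R^{\ps}_{\tr\rbar}$ using the finite generation of $\Hom_{\dualcat(\OO)}(P_j,P_i)$ established in the proof of Lemma~\ref{fibre_finite}. The only minor differences are that the paper takes $P=\bigoplus_i P_i$ (one copy each), deduces finite generation of $\md=\Hom_{\dualcat(\OO)}(P,\tau^{\vee})$ via topological Nakayama from the finite-dimensionality of $\md\otimes_{R^{\ps}_{\tr\rbar}}k$, and then embeds $R\hookrightarrow\md^{\oplus n}$ by acting on a set of generators; you instead surject directly from $\Hom(P_j,P_i)$ and embed $R$ through $\End_{\dualcat(\OO)}(\tau^{\vee})$.
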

\begin{proof} Let $P_i$ be as in the proof of Lemma \ref{fibre_finite} and let $P=\oplus_{i=1}^n P_i$. Then $P$ is a 
projective generator for the category $\dualcat(\OO)_{\rbarss}$ and thus $R$ acts faithfully on 
$\md:=\Hom_{\dualcat(\OO)}(P, \tau^{\vee})$. The proof of Lemma \ref{fibre_finite} shows that 
$\md \otimes_{R^\ps_{\tr \rbar}} k$ is a finite dimensional $k$-vector space. Since $\md$ is a compact 
$R^{\ps}_{\tr \rbar}$-module we deduce that  $\md $ is finitely generated over $R^{\ps}_{\tr \rbar}$ and hence also 
over $R$. If $v_1, \ldots, v_n$ are generators of $\md$ as an $R$-module then the map $r\mapsto (r v_1, \ldots, r v_n)$ induces an embedding of $R$-modules $R\hookrightarrow \md^{\oplus n}$. Since $R^{\ps}_{\tr \rbar}$ is noetherian we deduce that $R$ is a finitely generated $R^{\ps}_{\tr \rbar}$-module. 
\end{proof} 

\begin{lem}\label{no_adm_q} Let $J$ be an injective envelope of an irreducible representation in $\Mod^{\ladm}_{G, \zeta}(k)$. Then $J$ 
does not admit an admissible quotient. 
\end{lem}
\begin{proof} The action of $R^{\ps}_{\tr \rbar}$ on $J$ as the centre of the category 
factors through the quotient $R^{\ps, \psi}_{\tr \rbar}/\varpi$.  If $\sigma$ is a smooth irreducible representation of 
$K$ then $\Hom_K(\sigma, J)^{\vee}$ is either zero or a finitely generated Cohen-Macaulay module of 
$R^{\ps, \psi}_{\tr \rbar}/\varpi$ of dimension $1$; in most cases this follows from
\cite[Thm.\,5.2]{duke}, since it implies that the element denoted by $x$ in \textit{loc.\,cit.} is a regular parameter. The rest of the cases are handled in \cite[Prop.\,3.9]{hu} by a similar argument. It is proved in \cite[Prop.\,5.16]{image} that 
$J$ is injective in $\Mod^{\sm}_{G, \zeta}(k)$. Since restriction is right adjoint to compact induction, which is
an exact functor, we deduce that $J$ is injective in $\Mod^{\sm}_{K, \zeta}(k)$. Thus if $\sigma \in \Mod^{\sm}_{K, \zeta}(k)$
is of finite length then $\Hom_K(\sigma, J)^{\vee}$ is either zero or a successive extension of $1$-dimensional Cohen--Macaulay 
modules corresponding to the irreducible subquotients $\sigma'$ of $\sigma$ for which $\Hom_K(\sigma', J)$ is non-zero. Hence, if $\Hom_K(\sigma, J)^{\vee}$  is non-zero then 
it is Cohen--Macaulay of dimension $1$. We note that  by taking $\sigma= \Ind_{K_n (Z\cap K)}^K \zeta$, where $K_n$ is the $n$-th congruence subgroup of $K$, we deduce that $(J^{K_n})^{\vee}$ is Cohen--Macaulay of dimension $1$. 

If  $\tau$ is a quotient of $J$ then we may choose $n\ge 1$ such that  
the map $J^{K_n}\rightarrow \tau^{K_n}$ is non-zero. By taking Pontryagin duals we obtain a non-zero map of  
$R^{\ps}_{\tr \rbar}$-modules $(\tau^{K_n})^{\vee} \rightarrow (J^{K_n})^{\vee}$. If $\tau$ is admissible then 
$\tau^{K_n}$ is a finite dimensional $k$-vector space, and hence $(J^{K_n})^{\vee}$ will contain a non-zero submodule 
killed by the maximal ideal of $R^{\ps}_{\tr \rbar}$. This is not possible as $(J^{K_n})^{\vee}$ is Cohen--Macaulay of dimension $1$.
\end{proof}

\begin{prop}\label{finite_fibre_3} Let $\tau$ and $R\hookrightarrow \End_G(\tau)$ be as in Lemma \ref{finite_fibre_2}.
Assume that $\tau$ is killed by $\varpi$ and has a central character. If $\tau$ is admissible then 
the Krull dimension of $R$ is less or equal to $2$.
\end{prop}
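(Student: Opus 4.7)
The plan is to combine Lemma \ref{finite_fibre_2} with the observation that the $R^{\ps}_{\tr \rbar}$-action on $\tau$ must factor through a quotient of Krull dimension $2$.

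Since $R\hookrightarrow \End_G(\tau)$ and $\varpi\cdot\tau=0$, the uniformiser $\varpi$ is zero in $R$. Since $\tau$ has central character $\zeta$, the centre $Z(G)\cong \Qp^{\times}$ acts on $\tau$ by $\zeta$; via local class field theory and the normalisation of the functor $\cV$ fixed in the Notation section, this forces the determinant of the universal $2$-dimensional pseudo-character acting on $\tau$ (through the centre of $\Mod^{\ladm}_G(\OO)_{\rbarss}$) to equal the character $\zeta \varepsilon^{-1}$ of $G_{\Qp}$. Consequently the composition $R^{\ps}_{\tr \rbar}\to R\hookrightarrow \End_G(\tau)$ factors through $R^{\ps,\zeta}_{\tr \rbar}/(\varpi)$, where $R^{\ps,\zeta}_{\tr \rbar}$ denotes the quotient of $R^{\ps}_{\tr \rbar}$ parametrising $2$-dimensional pseudo-characters of $G_{\Qp}$ lifting $\tr \rbar$ with determinant $\zeta \varepsilon^{-1}$.

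By Lemma \ref{finite_fibre_2}, $R$ is finite over $R^{\ps}_{\tr \rbar}$, hence finite over its image in $\End_G(\tau)$, which is a quotient of $R^{\ps,\zeta}_{\tr \rbar}/(\varpi)$. Thus $\dim R\leq \dim R^{\ps,\zeta}_{\tr \rbar}/(\varpi)$. It remains to check that the right-hand side equals $2$. Under the running hypothesis $\chi_1\chi_2^{-1}\neq \omega^{\pm 1}$ (with $p\geq 5$ assumed in the scalar case $\chi_1=\chi_2$, as required to invoke \cite{image}), the universal deformation ring $R_{\det \rbar}$ of the determinant character $\chi_1\chi_2: G_{\Qp}\to k^\times$ is formally smooth over $\OO$ of relative dimension $2$ (since $G_{\Qp}^{\mathrm{ab},(p)}\cong \Zp^2$), and the natural map $R^{\ps}_{\tr \rbar}\to R_{\det \rbar}$, sending a pseudo-character to its determinant, is also formally smooth of relative dimension $2$: when $\chi_1\neq \chi_2$ any pseudo-character deformation is uniquely determined by its determinant together with an arbitrary lift of one of the characters $\chi_i$, and the scalar case is analogous. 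Hence $R^{\ps,\zeta}_{\tr \rbar}=R^{\ps}_{\tr \rbar}\otimes_{R_{\det \rbar}}\OO$ is formally smooth over $\OO$ of relative dimension $2$, so $R^{\ps,\zeta}_{\tr \rbar}/(\varpi)$ has Krull dimension $2$.

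The main obstacle is the factoring established in the first paragraph: one must track carefully how the centre of $\Mod^{\ladm}_G(\OO)_{\rbarss}$ interacts with the central character of $\tau$, in order to see that the $R^{\ps}_{\tr\rbar}$-action actually factors through the fixed-determinant quotient $R^{\ps,\zeta}_{\tr \rbar}$. This requires keeping track of the normalisations relating $\cV$, the central character of $\tau$, and the determinant of the universal pseudo-character via local class field theory, and appealing to the ``unfixing the central character'' technique of \cite[\S 6.5, Cor.\,6.23]{6auth2}.
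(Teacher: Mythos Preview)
Your proposal contains a genuine error in the dimension count: the ring $R^{\ps,\zeta}_{\tr\rbar}/(\varpi)$ has Krull dimension $3$, not $2$. Your claim that ``any pseudo-character deformation is uniquely determined by its determinant together with an arbitrary lift of one of the characters $\chi_i$'' is false. Even in the generic reducible case $\chi_1\chi_2^{-1}\neq 1,\omega^{\pm 1}$, the groups $\Ext^1_{G_{\Qp}}(\chi_1,\chi_2)$ and $\Ext^1_{G_{\Qp}}(\chi_2,\chi_1)$ are each one-dimensional, so there exist irreducible lifts of the non-split extensions, and these give pseudo-character deformations of $\chi_1+\chi_2$ that are \emph{not} sums of character deformations. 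In fact $R^{\ps,\psi}_{\tr\rbar}\cong\OO[[x,y,z]]$ in every block (see \cite[\S A]{image}), so its reduction mod $\varpi$ is a polynomial ring of dimension $3$. Consequently, the chain of inequalities you set up yields only $\dim R\le 3$.

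This is exactly why the paper's proof does not stop at this bound. After reducing to the image of $R^{\ps,\psi}_{\tr\rbar}/\varpi$ in $\End_G(\tau)$, the paper shows that this $3$-dimensional integral domain cannot act \emph{faithfully} on $\tau$. The extra input is a support-theoretic argument: one produces a finitely generated $R^{\ps,\psi}_{\tr\rbar}/\varpi$-module $M(\sigma)$ of Krull dimension $1$ such that $\md\otimes_{R^{\ps,\psi}_{\tr\rbar}}M(\sigma)$ is finite-dimensional over $k$, where $\md=\Hom_{\dualcat(\OO)}(P,\tau^\vee)$ is a faithful finitely generated module for the image. This forces the supports of $\md$ and $M(\sigma)$ to meet only at the closed point, so $\md$ cannot be supported on all of $\Spec R^{\ps,\psi}_{\tr\rbar}/\varpi$. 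The dimension estimate for $M(\sigma)$ is the non-formal step, relying on \cite[Thm.\,5.2]{duke} and \cite[Cor.\,3.11, Thm.\,3.23]{hu}. Your argument is missing this entire mechanism.
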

\begin{proof}  Since $\tau$ is admissible, its $G$-socle is of finite length. Lemma \ref{finite_fibre_2} implies that 
$R$ is a finite $R^{\ps}_{\tr \rbar}$-module. Since we are interested only in the Krull dimension we may assume that 
$R$ is the image of $R^{\ps}_{\tr \rbar}$ in $\End_G(\tau)$.  Since $\tau$ has a central character the map 
$R^{\ps}_{\tr \rbar}\rightarrow \End_G(\tau)$ factors through the quotient $R^{\ps}_{\tr \rbar}\twoheadrightarrow R^{\ps, \psi}_{\tr \rbar}$, which parameterises pseudocharacters with a fixed determinant.  Since $\varpi$ kills 
$\tau$ we get a surjection $R^{\ps, \psi}_{\tr \rbar}/\varpi\twoheadrightarrow  R$. Now $R^{\ps, \psi}_{\tr \rbar}/\varpi$ 
is an integral domain of Krull dimension $3$, see \cite[\S A]{image}, so we have to show that $R^{\ps, \psi}_{\tr \rbar}/\varpi$ does not act faithfully on $\tau$. 

Let $\pi_1, \ldots \pi_k$ be as in the proof of Lemma \ref{fibre_finite}, but let $\pi_i\hookrightarrow J_i$ 
now denote an injective envelope of $\pi_i$ in $\Mod^{\ladm}_{G, \zeta}(k)$, so that in the rest of the proof we work with a fixed central character. Let $P:=\oplus_{i=1}^k P_i$, where $P_i:=J_i^{\vee}$ and
 let $E= \End_{\dualcat(k)}(P)$. Then $P$ is a projective generator of $\dualcat_\zeta(k)_{\rbarss}$ and 
 if we let $\md:=\Hom_{\dualcat(k)}(P, \tau^{\vee})$ then evaluation induces a natural isomorphism 
 $\md \wtimes_E P\overset{\cong}{\longrightarrow}\tau^{\vee}$. Thus it is enough to show that $R^{\ps, \psi}_{\tr \rbar}/\varpi$ does not act faithfully on $\md$. 
 
 The action of $R^{\ps, \psi}_{\tr \rbar}/\varpi$ on $\md$ is faithful if and only if $\md\otimes_{R^{\ps}_{\tr \rbar}} Q$ is non-zero, where $Q$ is the quotient field of $R^{\ps, \psi}_{\tr \rbar}/\varpi$.  Let us assume that this is the case. After replacing 
 $\md$ by a subquotient we may assume that $\md$ is a cyclic $E$-module and the map $\md \rightarrow \md\otimes_{R^{\ps}_{\tr \rbar}} Q$ is injective. We claim that the algebra $E\otimes_{R^{\ps}_{\tr \rbar}} Q$ is semisimple. Granting the claim we deduce that 
the surjection $E\otimes_{R^{\ps}_{\tr \rbar}} Q\twoheadrightarrow \md\otimes_{R^{\ps}_{\tr \rbar}} Q$ has 
a section of $E\otimes_{R^{\ps}_{\tr \rbar}} Q$-modules. By composing it with the injection $\md \hookrightarrow \md\otimes_{R^{\ps}_{\tr \rbar}} Q$ we obtain an injection $\md \hookrightarrow E\otimes_{R^{\ps}_{\tr \rbar}} Q$
of $E$-modules.  Since $\md$ is finitely generated over $E$ we may multiply this embedding by an element of 
 $R^{\ps, \psi}_{\tr \rbar}$ to obtain an injection $E$-modules $\md\hookrightarrow E$. 
 Since $P$ is a projective generator for the block,  by applying $\wtimes_E P$ we obtain an injection $\tau^{\vee}\hookrightarrow P$ and dually a surjection $J\twoheadrightarrow \tau$. Thus one of the $J_i$'s admits an admissible quotient contradicting Lemma \ref{no_adm_q}.

To prove the claim we observe that  if $\rbar$ does not have scalar semi-simplification then 
 $E\otimes_{R^{\ps}_{\tr \rbar}} Q$ is a matrix algebra over $Q$. If $\rbar$ is irreducible then this follows from 
 \cite[Prop.\,6.3]{image}, if $\rbar$ is reducible generic then this follows from \cite[Cor.\,8.11]{image} together with 
 \cite[Prop.\,4.3, 3.12]{2adic}, 
 if $\rbarss= \chi\oplus \chi \omega$ then this follows from the explicit description of the endomorphism ring of a projective 
 generator of $\dualcat_\zeta(\OO)_{\rbarss}$ given after \cite[Cor.\,10.94]{image}: both generators $c_0$ and $c_1$ of the reducible locus in $R^{\ps, \psi}_{\tr \rbar}$ are non-zero in $R^{\ps, \psi}_{\tr \rbar}/\varpi$ and hence in $Q$.

 Let us assume that
  $\rbar$ has scalar semi-simplification. After twisting we may assume that $\zeta$ is trivial and use \cite[Cor.\,9.27]{image} 
  to identify $E$ with the ring opposite to $R/\varpi$, where $R$ is the ring defined in \cite[Eqn.\,(125)]{image}. Then $R^{\ps, \psi}_{\tr \rhobar}$ gets identified with the subring denoted by $\OO[[t_1, t_2, t_3]]$ in \cite{image}. Let $C$ be the finite $R^{\ps, \psi}_{\tr \rhobar}$-algebra defined in \cite[Def.\,9.7]{image}, let $x$ be a generic point of $C\otimes_{R^{\ps}_{\tr \rhobar}} Q$ and 
  let $\kappa(x)$ be its residue field.  We claim that $E\otimes_{R^{\ps}_{\tr \rhobar}} \kappa(x)\cong M_2(\kappa(x))$. 
  The proof of Lemma 9.20 in \cite{image} shows that the element denoted by $(uv-vu)(uv-vu)^*$ is non-zero in $Q$. The 
  proof Lemma 9.21 in \cite{image} goes through to show that the specialisation at $x$ of the representation $\rho: E\rightarrow M_2(C)$, constructed in \cite[Prop.\,9.8]{image}, is absolutely irreducible over $\kappa(x)$. The double centralizer theorem 
  implies that the map $E\otimes_{R^{\ps}_{\tr \rhobar}} \kappa(x) \rightarrow M_2(\kappa(x))$ is surjective. Since
the algebra $E\otimes_{R^{\ps}_{\tr \rhobar}} \kappa(x)$ is $4$-dimensional as 
  a $\kappa(x)$-vector space, see \cite[Lem.\,9.18]{image}, we obtain the claim. 
  \end{proof}
  \begin{remar} One could give a different proof of the Proposition using \cite[Prop.\,5.6]{hu}.
  \end{remar}

\begin{cor}\label{ff_5} Let $\tau$ and $R\hookrightarrow \End_G(\tau)$ be as in Lemma \ref{finite_fibre_2}. Assume that $\tau$ has a central character 
and is $\varpi$-divisible (equivalently $\tau^{\vee}$ is $\varpi$-torsion free). Then the Krull dimension of $R$ is at most $3$. 
\end{cor}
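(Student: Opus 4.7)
This corollary is the $\varpi$-divisible analogue of Proposition~\ref{finite_fibre_3}, and I would prove it by reducing modulo $\varpi$ and picking up one extra dimension along $\Spec\OO$. First I would show that $\varpi$ is a non-zero-divisor in $R$: since $\tau$ is $\varpi$-divisible, its Pontryagin dual $\tau^{\vee}$ is $\varpi$-torsion free, so the faithful action $R\hookrightarrow \End_G(\tau)=\End_{\dualcat(\OO)}(\tau^{\vee})$ forces any $r\in R$ with $\varpi r=0$ to send $\tau^{\vee}$ into $\tau^{\vee}[\varpi]=0$, whence $r=0$. Consequently $\dim R=1+\dim R/\varpi R$, and it suffices to prove $\dim R/\varpi R\le 2$.

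Next I would pass to the faithful finitely generated $R$-module $\md:=\Hom_{\dualcat(\OO)}(P,\tau^{\vee})$ used in the proofs of Lemmas~\ref{fibre_finite} and~\ref{finite_fibre_2}, with $P$ the projective generator of $\dualcat_{\zeta}(\OO)_{\rbarss}$. Since $\tau^{\vee}$ is $\varpi$-torsion free so is $\md$, and the finite-length socle hypothesis shows that $\md$ is finitely generated over $R^{\ps,\zeta}_{\tr\rbar}$; by $\varpi$-regularity, $\dim_{R^{\ps,\zeta}_{\tr\rbar}}(\md/\varpi\md)=\dim R -1 =\dim R/\varpi R$, so it is enough to bound the former by $2$. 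Dualizing the exact sequence $0\to\tau[\varpi]\to\tau\xrightarrow{\varpi}\tau\to 0$ (which is exact because $\tau$ is $\varpi$-divisible) and applying the exact functor $\Hom_{\dualcat(\OO)}(P,-)$ identifies $\md/\varpi\md\cong \Hom_{\dualcat(\OO)}(P,\tau[\varpi]^{\vee})$.

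For the final step I would adapt the support-intersection argument from the proof of Proposition~\ref{finite_fibre_3} to the module $\md/\varpi\md$ over the ring $R^{\ps,\zeta}_{\tr\rbar}/\varpi$ (of Krull dimension $3$). Choose an irreducible smooth $k$-representation $\sigma$ of $K$ occurring in the $K$-socle of $\soc_G\tau$ (which is admissible of finite length, hence has a finite-dimensional $\sigma$-isotypic part). Then $\Hom_K(\sigma,\tau[\varpi])\ne 0$, and the analogue of~\eqref{improvement} gives
\begin{equation*}
(\md/\varpi\md)\wtimes_E M(\sigma)\cong \Hom^{\cont}_{\OO[[K]]}(\tau[\varpi]^{\vee},\sigma^{\vee})^{\vee}.
\end{equation*}
Combined with the fact that $M(\sigma)$ is a finitely generated $R^{\ps,\zeta}_{\tr\rbar}$-module of Krull dimension~$1$, the same calculation as in the proof of Proposition~\ref{finite_fibre_3} will force $\md/\varpi\md$ not to have full support in $\Spec R^{\ps,\zeta}_{\tr\rbar}/\varpi$, yielding $\dim(\md/\varpi\md)\le 2$ and hence $\dim R\le 3$. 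The main obstacle will be verifying that the right-hand side of the displayed isomorphism is finite-dimensional over $k$, since $\tau[\varpi]$ need not be admissible; I would address this by restricting $\sigma$ to the admissible submodule $\soc_G\tau$ and exploiting the finite generation of $\md$ over $R^{\ps,\zeta}_{\tr\rbar}$ to control the contribution from $\tau/\soc_G\tau$ to the intersection of supports.
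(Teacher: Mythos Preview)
Your approach is essentially the same as the paper's: both exploit the $\varpi$-torsion freeness of $\tau^{\vee}$ to reduce modulo $\varpi$, identify $\md/\varpi\md$ with $\Hom_{\dualcat(\OO)}(P,\tau[\varpi]^{\vee})$, and then bound the dimension of this module by $2$. The paper does this last step more efficiently: rather than re-running the support-intersection argument, it simply lets $\overline{R}$ be the quotient of $R$ acting faithfully on $\md/\varpi\md$, observes $\dim\overline{R}=\dim R-1$, and applies Proposition~\ref{finite_fibre_3} directly to the pair $(\overline{R},\tau[\varpi])$.

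You are right to flag that $\tau[\varpi]$ need not be admissible from the stated hypotheses alone; the paper's invocation of Proposition~\ref{finite_fibre_3} silently assumes this as well. In the only application (Corollary~\ref{ff_6}) the representation $\tau$ is admissible, so $\tau[\varpi]$ is too, and the issue evaporates. Your proposed workaround, however, does not close the gap: knowing that $\Hom_K(\sigma,\soc_G\tau)$ is finite-dimensional gives no control over $\Hom_K(\sigma,\tau[\varpi])$, and there is no evident mechanism by which finite generation of $\md$ over $R^{\ps}_{\tr\rbar}$ bounds the $\sigma$-isotypic part of $\tau[\varpi]/\soc_G\tau$. The honest fix is to add admissibility of $\tau$ (or of $\tau[\varpi]$) to the hypotheses, which is exactly what the paper uses in practice.
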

\begin{proof}  Since $\tau^{\vee}$ is $\varpi$-torsion free we have an exact sequence 
$0\rightarrow \tau^{\vee} \overset{\varpi}{\rightarrow} \tau^{\vee}\rightarrow \tau^{\vee}/\varpi\rightarrow 0$. 
Let $P$ be the projective generator of $\dualcat_{\zeta}(\OO)_{\rbarss}$ as in the proof of Proposition \ref{finite_fibre_3}.
We have an exact sequence of $R$-modules 
$$0\rightarrow \Hom_{\dualcat(\OO)}(P, \tau^{\vee})\overset{\varpi}{\rightarrow}  \Hom_{\dualcat(\OO)}(P, \tau^{\vee})\rightarrow \Hom_{\dualcat(\OO)}(P, \tau^{\vee}/\varpi)\rightarrow 0.$$
As explained in the proof of Lemma \ref{finite_fibre_2}, $\Hom_{\dualcat(\OO)}(P, \tau^{\vee})$ is a finitely generated faithful $R$-module. If we let $\overline{R}$ be the quotient of 
$R$ which acts faithfully on $\Hom_{\dualcat(\OO)}(P, \tau^{\vee}/\varpi)$ then its Krull dimension is equal to the dimension of the support of $\Hom_{\dualcat(\OO)}(P, \tau^{\vee}/\varpi)$
in $\Spec R$, which is one less than the dimension of the support of $\Hom_{\dualcat(\OO)}(P, \tau^{\vee})$ in $\Spec R$, as $\varpi$ is regular on the module, 
and thus is equal to $\dim R -1$, as the action of $R$ on $\Hom_{\dualcat(\OO)}(P, \tau^{\vee})$ is faithful. 

It follows from Proposition above applied to $\overline{R}$ and $\tau[\varpi]$ that the Krull dimension of $\overline{R}$ is at most $2$, thus the Krull dimension of $R$ is at most $3$.
\end{proof} 

\begin{cor}\label{ff_6} The image of $\TT(U^p)_{\mm}$ in $\End_G^{\cont}( S_{\psi, \lambda}(U^\pp, \OO)_\mm)$
is a finite $R^{\ps}_{\tr \rbar}$-algebra of Krull dimension at most $3$.
\end{cor}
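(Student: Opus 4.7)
The plan is to apply Lemma \ref{finite_fibre_2} and Corollary \ref{ff_5} to the representation $\tau := S_{\psi,\lambda}(U^{\pp}, L/\OO)_{\mm}$, equipped with the natural action of the image $R$ of $\TT(U^p)_{\mm}$. First I observe that via the identifications \eqref{more_duals}, Pontryagin--Schikhof duality identifies $\End_G^{\cont}(S_{\psi,\lambda}(U^{\pp},\OO)_{\mm})$ with the opposite of $\End_G(\tau)$; in particular $R$ acts faithfully on $\tau$ by construction. Proposition \ref{right_object} places $\tau$ in $\Mod^{\ladm}_{G,\zeta}(\OO)_{\rbarss}$, and Lemmas \ref{admissible} and \ref{K-injective} show that $\tau$ is admissible (it is a direct summand of the admissible representation $S_{\psi,\lambda}(U^{\pp},L/\OO)$), so in particular its $G$-socle has finite length.

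The next step is to produce the $R^{\ps}_{\tr\rbar}$-algebra structure on $R$ compatibly with the central action. Corollary \ref{actions_coincide} asserts that $\theta_1 = \theta_2$ on $S_{\psi,\lambda}(U^{\pp}, L/\OO)_{\mm}$. Since $\theta_1$ is defined via the map $R^{\ps}_{\tr\rbar}\to R^{\univ}_{\rhobar}\to \TT(U^p)_{\mm}$, it factors through $R$; this yields a ring homomorphism $R^{\ps}_{\tr\rbar}\to R$ whose composition with $R\hookrightarrow \End_G(\tau)$ equals $\theta_2$, i.e.\ the action of $R^{\ps}_{\tr\rbar}$ as the centre of $\Mod^{\ladm}_{G,\zeta}(\OO)_{\rbarss}$. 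Lemma \ref{finite_fibre_2} then immediately gives that $R$ is a finite $R^{\ps}_{\tr\rbar}$-algebra.

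For the Krull dimension bound, I verify the hypotheses of Corollary \ref{ff_5}. The representation $\tau$ has central character $\zeta$, since the centre of $G$ acts on $S_{\psi,\lambda}(U^{\pp},L/\OO)$ via the restriction of $\psi$ to $F_{\pp}^{\times}=\Qp^{\times}$. Moreover $\tau$ is $\varpi$-divisible; equivalently, its Pontryagin dual is $\varpi$-torsion free, which is visible from \eqref{beijing_2} (the $L/\OO$-version decomposes as a direct sum of $\lambda^{\vee}\otimes_{\OO} C_{\zeta}(K,L/\OO)$, manifestly divisible) combined with the fact that localisation at $\mm$ is a direct summand by Lemma \ref{K-injective}. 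Corollary \ref{ff_5} then bounds the Krull dimension of $R$ by $3$.

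There is no genuine obstacle: the argument is an assembly of the results established earlier in this section. The only point deserving a moment's care is the compatibility of the endomorphism rings of $S_{\psi,\lambda}(U^{\pp},\OO)_{\mm}$ and of $\tau$ under duality, but this is a formal consequence of the anti-equivalence of categories provided by Pontryagin duality together with \eqref{more_duals}.
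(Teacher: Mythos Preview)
Your proof is correct and follows exactly the approach of the paper, which simply states that the result follows from Lemma~\ref{finite_fibre_2} and Corollary~\ref{ff_5} applied to $\tau = S_{\psi,\lambda}(U^{\pp}, L/\OO)_{\mm}$. You have carefully unpacked the verification of the hypotheses (admissibility, the compatibility of the $R^{\ps}_{\tr\rbar}$-actions via Corollary~\ref{actions_coincide}, the central character, and $\varpi$-divisibility), all of which the paper leaves implicit.
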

\begin{proof} This follows from Lemma \ref{finite_fibre_2}, Corollary \ref{ff_5} applied to $\tau=S_{\psi, \lambda}(U^{\pp}, L/\OO)_{\mm}$. 
\end{proof}

\begin{remar} The proof of  Corollary \ref{ff_6} goes back to the workshop on Galois representations and Automorphic forms in Princeton in 2011, and was motivated by discussions with Matthew Emerton there. It remained unpublished, since after we communicated the result to Frank Calegari, see \cite{persiflage}, together with Patrick Allen they  proved
 much more general results concerning finiteness of global deformation rings over local deformation rings, see \cite{AC}. However, one advantage of our argument is that we don't have to assume anything about the image of the global Galois representation $\rhobar$.
 \end{remar} 

\begin{lem}\label{dim_irred} If $\pi \in \Mod^{\ladm}_{G, \zeta}(\OO)$ is irreducible and not a character then $$\delta_{\OO[[K]]}(\pi^{\vee})=1.$$
\end{lem}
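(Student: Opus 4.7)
My strategy is to establish matching bounds $\delta(\pi^\vee) \geq 1$ and $\delta(\pi^\vee) \leq 1$. First, observe that since $\pi$ is irreducible in the $\OO$-torsion category $\Mod^{\ladm}_{G,\zeta}(\OO)$, its $\varpi$-torsion subrepresentation is nonzero and hence equal to $\pi$, so $\pi$ is a $k$-linear $G$-representation and $\pi^\vee$ is a finitely generated $k[[K]]$-module by admissibility. Because $\pi$ is irreducible admissible and not a character, it is infinite-dimensional over $k$, and Lemma \ref{dim_0} immediately yields the lower bound $\delta(\pi^\vee) \geq 1$.

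For the upper bound, after enlarging $L$ I will assume $\pi$ is absolutely irreducible and argue case-by-case using the classification of Barthel--Livne \cite{bl} and Breuil \cite{breuil1}. If $\pi = \Ind_B^G \chi$ is principal series, then by the Iwasawa decomposition $G = BK$ one has $\pi|_K \cong \Ind_{B\cap K}^K \chi|_{B\cap K}$; for a uniform pro-$p$ subgroup $K_1 \subset K$, the Pontryagin dual is a cyclic $k[[K_1]]$-module isomorphic to $k[[K_1]] \otimes_{k[[B\cap K_1]]} k_{\chi^{-1}}$, and the standard induction formula for Iwasawa-algebraic dimension gives $\delta = \dim K_1 - \dim(B \cap K_1) = 1$. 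If $\pi$ is a twist of the Steinberg representation, then the short exact sequence $0 \to \chi \to \Ind_B^G \chi \to \pi \to 0$ dualizes to realize $\pi^\vee$ as a submodule of a $k[[K]]$-module of $\delta$-dimension $1$, and monotonicity of $\delta$ under submodules concludes this case.

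The remaining supersingular case is the main obstacle. Writing $\pi = \cInd_{KZ}^G \sigma /(T)$ for an irreducible $KZ$-representation $\sigma$, I will use Breuil's explicit description of $\pi|_K$ as a diagram built from its $K$-socle constituents to show that $\dim_k \pi^{K_m}$ grows linearly in $p^m$ as $m \to \infty$; equivalently, the Hilbert--Samuel polynomial of $\pi^\vee$ with respect to the augmentation filtration of $k[[K_1]]$ has degree one, yielding $\delta(\pi^\vee) \leq 1$. Combined with the universal lower bound this gives the desired equality in every case.
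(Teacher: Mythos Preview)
Your proposal is correct and follows essentially the same route as the paper: the lower bound via infinite-dimensionality and Lemma~\ref{dim_0}, then the case-by-case upper bound using the Barthel--Livn\'e/Breuil classification, handling principal series by the Iwasawa decomposition and the induction formula for $\delta$, special series by the short exact sequence, and supersingulars by a linear growth estimate for $\dim_k \pi^{K_m}$.

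The only difference worth flagging is in the supersingular step. You appeal to ``Breuil's explicit description of $\pi|_K$ as a diagram'', which is a bit vague as stated; the paper instead uses the exact sequence of \cite[Thm.~6.3]{ext_super} together with the fact that the auxiliary modules $M_\sigma$, $M_{\tilde\sigma}$ restricted to $N_0=\bigl(\begin{smallmatrix}1&\Zp\\0&1\end{smallmatrix}\bigr)$ are isomorphic to the space of smooth functions on $N_0$, giving $\dim M_\sigma^{K_n\cap N_0}=p^n$ directly. This is a cleaner way to extract the linear growth than working with the $K$-socle filtration alone. The paper also notes that one can cite \cite{schmidt_strauch}, \cite{kohlhaase}, or Morra \cite{morra} outright, so if you want to avoid the explicit estimate entirely those references suffice.
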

\begin{proof} This follows from the proof of Corollary 7.5 in \cite{schmidt_strauch}. Alternatively, one can use \cite[Prop.\,5.4, 5.7, Thm.\,5.13]{kohlhaase}. However, the proof dearest to the author's heart is to show that  $\dim \pi^{K_n}$ grows as $C p^n$, for some constant $C$,  as $K_n$ runs over principal 
congruence subgroups of $K$. If $\pi$ is principal series then this can be done by hand, the result for 
special series can be deduced from this. The most interesting case, when $\pi$ is supersingular, can be 
deduced from the exact sequence in Theorem 6.3 in \cite{ext_super}. Let us sketch the argument using the notation of \textit{loc.\,cit.}. Indeed, using the exact sequence it is enough to estimate the growth of $\dim M^{K_n}$.  \cite[Lem.\,4.10]{ext_super} implies that it is enough to estimate the growth of $\dim M_{\sigma}^{K_n}$ and $\dim M_{\tilde{\sigma}}^{K_n}$.
 By \cite[Prop. 4.7]{ext_super} the restrictions of $M_{\sigma}$ and $M_{\tilde{\sigma}}$ to $N_0:=\bigl(\begin{smallmatrix} 1 & \Zp \\ 0 & 1 \end{smallmatrix}\bigr )$ are isomorphic to the space of 
 smooth functions from $N_0$ to $k$. This implies that 
 $$\dim M_{\sigma}^{K_n \cap N_0}= \dim M_{\tilde{\sigma}}^{K_n \cap N_0}=p^n.$$
 This gives an upper bound on the growth of $K_n$-invariants, which has to be of the right order, since 
 both $M_{\sigma}$ and $M_{\tilde{\sigma}}$ are infinite dimensional $k$-vector spaces. 
 
 Yet another  alternative is to use results of Morra \cite{morra}, who actually computes the dimensions
 $\dim \pi^{I_n}$, where $I_n$ is a certain filtration of Iwahori subgroup by open normal subgroups.
\end{proof} 

\begin{prop}\label{find_flat}  Let $R$ be the image of $\T(U^p)_{\mm}\rightarrow \End_G^{\cont}( S_{\psi, \lambda}(U^\pp, \OO)_\mm)$. 
Then there is a subring $A\subset R$, such that $A\cong \OO[[x, y]]$, $R$ is finite over $A$ and $S_{\psi, \lambda}(U^\pp, \OO)_\mm^d$ is a flat $A$-module.
 \end{prop}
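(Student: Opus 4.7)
The plan is to apply Corollary \ref{find_A} to $M := S_{\psi, \lambda}(U^\pp, \OO)_\mm^d$. The $\OO$-torsion-freeness of $M$ is automatic (it is a Schikhof dual), the faithfulness of the $R$-action is built into the definition of $R$, and by \eqref{more_duals} $M$ is canonically the Pontryagin dual $\pi^\vee$ of $\pi := S_{\psi, \lambda}(U^\pp, L/\OO)_\mm$. What remains is to verify that $M$ is Cohen--Macaulay over $\OO[[K]]$ and that equality holds in \eqref{eqn_dim1}; this will force $\dim R = 3$ and produce $A \cong \OO[[x, y]]$ via Cohen's structure theorem (with $r = \dim R - 1 = 2$).

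For Cohen--Macaulayness I would appeal to Lemma \ref{K-injective}: $\pi$ is $K$-injective in $\Mod^{\sm}_{K, \zeta}(\OO)$, so $M$ is projective in $\Mod^{\pro}_{K, \zeta}(\OO)$. Choosing an open uniform pro-$p$ subgroup $K_1 \subset K = \GL_2(\Zp)$, this category is that of pseudocompact modules over $\Lambda := \OO[[K_1]]/(z_0 - \zeta(z_0)^{-1})$, where $z_0$ is a topological generator of the pro-$p$ part of the centre. Since $\Lambda$ is local, projective equals free, so $M$ is free of some finite positive rank over $\Lambda$. Being a quotient of the regular local ring $\OO[[K_1]]$ by a single non-zero-divisor, $\Lambda$ is Cohen--Macaulay over $\OO[[K_1]]$ of $\delta$-dimension $\dim K_1 = 4$, whence $M$ is Cohen--Macaulay and $\delta_{\OO[[K]]}(M) = 4$.

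It remains to show $\delta_{\OO[[K]]}(k \otimes_R M) = 1$. By Pontryagin duality $(k \otimes_R M)^\vee \cong \pi[\mm_R]$, which is nonzero by topological Nakayama, admissible, and by Proposition \ref{right_object} has every irreducible subquotient in $\BB_{\rbarss}$. Under the standing hypotheses $\BB_{\rbarss}$ contains only infinite-dimensional irreducibles, so Lemma \ref{dim_irred} gives the lower bound $\delta(\pi[\mm_R]^\vee) \geq 1$ (in fact $\pi[\mm_R]$ itself is infinite-dimensional over $k$). For the upper bound I would embed $\pi[\mm_R]$ into an injective envelope of its finite-length $G$-socle; dualising yields a surjection onto $\pi[\mm_R]^\vee$ from a finite direct sum of projective covers $P_\sigma$ for $\sigma \in \BB_{\rbarss}$, and the structural results of \cite{image} give that these $P_\sigma$ are Cohen--Macaulay over $\OO[[K]]$ of $\delta$-dimension $1$ in the blocks at hand. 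Combining $\delta(k \otimes_R M) = 1$ with \eqref{eqn_dim_bound} forces $\dim R \geq 3$, which with Corollary \ref{ff_6} yields $\dim R = 3$ and equality in \eqref{eqn_dim1}. Corollary \ref{find_A} then delivers the required $A \cong \OO[[x, y]]$ with $R$ finite over $A$ and $M$ flat over $A$. The main obstacle will be the upper bound $\delta(P_\sigma) \leq 1$, which relies on a nontrivial input from the structure theory of projective envelopes in $\dualcat_\zeta(\OO)_{\rbarss}$ developed in \cite{image}.
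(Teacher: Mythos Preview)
Your overall framework---applying Corollary \ref{find_A} with the Cohen--Macaulay input coming from $K$-projectivity, and squeezing $\dim R$ between Corollary \ref{ff_6} and the equality in \eqref{eqn_dim1}---is exactly the paper's strategy. The Cohen--Macaulay step and the lower bound $\delta(k\otimes_R M)\ge 1$ are fine.

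The genuine gap is your upper bound. The claim that the projective envelopes $P_\sigma$ in $\dualcat_\zeta(\OO)_{\rbarss}$ have $\delta$-dimension $1$ over $\OO[[K]]$ is false: one of the key structural results of \cite{image} is precisely that these $P_\sigma$ are \emph{projective} in $\Mod^{\pro}_{K,\zeta}(\OO)$, hence free over $\OO[[K_1/Z_1]]$ and of $\delta$-dimension $4$, not $1$. So the surjection $\bigoplus P_\sigma \twoheadrightarrow k\otimes_R M$ gives you no useful bound whatsoever. What actually controls $\delta(k\otimes_R M)$ is not the size of the $P_\sigma$ but the fact that $k\otimes_R M$ has \emph{finite length} in $\dualcat(\OO)$; once you know that, Lemma \ref{dim_irred} applied to each irreducible subquotient gives $\delta\le 1$ immediately.

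Proving finite length of the fibre is the real content here, and it is not purely local representation theory: the paper uses Corollary \ref{actions_coincide} (the local--global compatibility statement that the Hecke action of $R^{\ps}_{\tr\rbar}$ agrees with its action as the centre of the category) together with Lemma \ref{fibre_finite} (which says that killing the maximal ideal of $R^{\ps}_{\tr\rbar}$ acting via the centre produces a finite-length object). Without invoking \ref{actions_coincide} you have no a priori reason why killing $\mm_R$ should force finite length---$R$ is only defined as the Hecke image, and its relation to the categorical centre is exactly what \ref{actions_coincide} establishes. Your proposal silently skips this bridge.
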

 \begin{proof} Lemmas \ref{admissible}, \ref{K-injective} and equation \eqref{more_duals} imply that $S_{\psi, \lambda}(U^\pp, \OO)_\mm^d$ is projective in $\Mod^{\pro}_{K, \zeta}(\OO)$ and 
is  finitely generated over $\OO[[K]]$. Thus $S_{\psi, \lambda}(U^\pp, \OO)_\mm^d/\varpi$ is isomorphic 
 as $\OO[[K_1]]$-module to a finite direct sum of copies of $k[[K_1/Z_1]]$, where $Z_1$ is the centre of $K_1$. This implies that $S_{\psi, \lambda}(U^\pp, \OO)_\mm^d$
 is a Cohen-Macaulay $\OO[[K_1]]$-module of dimension $4$. The fibre $\mathcal F:=k\otimes_R S_{\psi, \lambda}(U^\pp, \OO)_\mm^d$ is a quotient
 of $k\otimes_{R^{\ps}_{\tr \rbar}} S_{\psi, \lambda}(U^\pp, \OO)_\mm^d$. Corollary \ref{actions_coincide} and 
 Lemma \ref{fibre_finite} imply that $\mathcal F$ is of finite length as a $G$-representation.
 Lemma \ref{dim_irred} implies that the fibre has dimension less than or equal to one. If it is zero then all irreducible subquotients of $\mathcal F$ would be characters and by looking at the graded pieces of 
 the $\mm$-adic filtration on $S_\psi(U^\pp, \OO)_\mm^d$, where $\mm$ is the maximal ideal of $R$,
 we would deduce that all the irreducible subquotients of $S_\psi(U^\pp, \OO)_\mm^d$ are characters.
 Since the central character is fixed  and $p>2$ there are no non-trivial
  extensions between $1$-dimensional $G$-representations over $k$. This would imply that $\SL_2(\Qp)$ acts 
  trivially on $S_{\psi, \lambda}(U^\pp, \OO)_\mm^d$, which is impossible, since it would imply that $\SL_2(\Zp)$ acts trivially on a projective object in $\Mod^{\pro}_{K, \zeta}(\OO)$. Hence, the dimension of the fibre is $1$. 
  It follows from Corollary \ref{ff_6} that  the Krull 
 dimension of $R$ is at most $3$.  The assertion follows from Corollary \ref{find_A}.
   \end{proof} 
 
 \section{Main result}\label{section_main}
We keep the notation of the previous section. Let  $D$ be the quaternion algebra over $F$, which is 
ramified at $\pp$, split at $\infty_F$ and has the same ramification behaviour as $D_0$ at all the other places. 
We fix an isomorphism 
$$ D_0\otimes_F \mathbb A_F^{\pp, \infty_F}\cong D \otimes_F \mathbb A_F^{\pp, \infty_F}.$$
This allows us to view the subgroup $U^{\pp}$ of $(D_0\otimes_F \AfF)^\times$, considered in the previous section, as a subgroup of  $(D \otimes_F \AfF)^{\times}$. Let $D_{\pp}:=D\otimes_F F_{\pp}$. Then $D_{\pp}$ is the non-split quaternion algebra over $F_\pp=\Qp$. We let $U_{\pp}=\OO_{D_\pp}^{\times}$ and $U=U_{\pp}U^\pp$. If $K$ is an open subgroup of $U:=U_{\pp}U^\pp$ then we let $X(K)$ be the corresponding Shimura curve for $D/F$ defined over $F$.  We let 
$$\widehat{H}^i(U^{\pp}, L/\OO):=\varinjlim_{K_{\pp}} H^i_{\et}(X(K_{\pp} U^{\pp})_{\overline{F}}, L/\OO),$$
where the limit is taken over open subgroups of $U_{\pp}$. Theorem 6.2 in \cite{scholze} gives an isomorphism of $G_{\Qp}\times D_\pp^{\times}$-representations: 
\begin{equation}\label{scholze_0}
\mathcal{S}^i( S(U^{\pp}, L/\OO))\cong \widehat{H}^i(U^{\pp}, L/\OO),
\end{equation}
where $\mathcal{S}^i$ is the functor  $\pi \mapsto H^i_{\et}(\PP^1, \FF_\pi)$.  Let $v$ be a finite place of $F$ different from $\pp$ and let $g_v \in (D_0\otimes_F F_v)^\times$. 
We view $g_v$ also as an element of $(D\otimes_F F_v)^\times$ using the identification above. Multiplication with $g_v$ induces an isomorphism
$ S(U^{\pp}, L/\OO)\cong S( g_v^{-1} U^{\pp} g_v, L/\OO)$. It follows from the identifications explained at the end of the proof of \cite[Prop.\,6.5]{scholze} that the following diagram of 
$G_{\Qp}\times D_\pp^{\times}$-representations commutes: 
\begin{displaymath}
\xymatrix{\mathcal{S}^i( S(U^{\pp}, L/\OO))\ar[r]^-{\mathcal{S}^i( \cdot g_v)}\ar[d]^-{\cong}_-{\eqref{scholze_0}}&
\mathcal{S}^i( S(g_v^{-1}U^{\pp} g_v, L/\OO))\ar[d]^-{\cong}_-{\eqref{scholze_0}}\\
 \widehat{H}^i(U^{\pp}, L/\OO)\ar[r]^-{\cdot g_v} & \widehat{H}^i(g_vU^{\pp} g_v^{-1}, L/\OO).}
\end{displaymath}
Thus if we let 
$$\widehat{H}^i(U^p, L/\OO):=\varinjlim_{K_p} H^i_{\et}(X(K_p U^p)_{\overline{F}}, L/\OO),$$
where the limit is taken over open subgroups of $U_p$, then we deduce that \eqref{scholze_0} induces an isomorphism of $\TT^{\univ}_S[G_{\Qp}\times (D\otimes_{\QQ} \Qp)^{\times}]$-modules:
\begin{equation}\label{scholze_1}
\mathcal{S}^i( S(U^p, L/\OO))\cong \widehat{H}^i(U^p, L/\OO).
\end{equation}

Let $\rhobar: G_{F, S}\rightarrow \GL_2(k)$ be an absolutely irreducible representation as in the previous section and let $\mm$ be the corresponding maximal ideal in $\TT^{\univ}_S$. Let $$\rbar:= \rhobar|_{G_{F_{\pp}}}$$ and recall that we assume that $F_\pp=\Qp$. It follows from 
Corollary 7.5 in \cite{scholze} that \eqref{scholze_1} induces an isomorphism of $\TT(U^p)_{\mm}[G_{\Qp}\times (D\otimes_\QQ \Qp)^{\times}]$-modules:
\begin{equation}\label{scholze_3}
\mathcal{S}^1( S(U^p, L/\OO)_{\mm})\cong \widehat{H}^1(U^p, L/\OO)_{\mm}.
\end{equation}

\begin{lem}\label{revision} 
Let $\lambda$ be a continuous representation of $U_p^{\pp}$ on a finite free $\OO$-module and let 
$I$ be an ideal of $\TT(U^p)_{\mm}$ then \eqref{scholze_3} induces an injection
\begin{equation}\label{revision1}
 \mathcal S^1( \Hom_{U_p^{\pp}}(\lambda, S(U^p, L/\OO)_{\mm} [I]) \subset \Hom_{U_p^{\pp}}(\lambda, 
\widehat{H}^1(U^p, L/\OO)_{\mm})[I],
\end{equation}
such that the subgroup of $\OO_{D_{\pp}}^{\times}$ of elements of reduced norm $1$ acts trivially on the cokernel. Moreover, 
if the semi-simpification of $\rbar$ is not of the form $\chi \oplus \chi \omega$ then the cokernel is zero.
\end{lem}
\begin{proof} If $\lambda$ is the trivial representation then the first assertion is \cite[Prop.\,7.7]{scholze}. 
The assertion for general $\lambda$ follows from this by presenting $\lambda$ as $\OO[[U_p^{\pp}]]$-module and 
arguing as in \eqref{beijing_3} and \eqref{beijing_4}. 

If \eqref{revision1} is not an isomorphism then 
it follows from the proof of \cite[Prop.\,7.7]{scholze} that (after replacing $U^{\pp}$ with an open subgroup) there is a subquotient of $S(U^{\pp}, L/\OO)_{\mm}$ 
with non-zero $\SL_2(\Qp)$-invariants. Since this representation is locally admissible, we deduce that 
there $S_{\psi}(U^{\pp}, L/\OO)_{\mm}$ has a subquotient  with non-zero $\SL_2(\Qp)$-invariants, for some
character $\psi: (\AfF )^{\times}/F^{\times}\rightarrow \OO^{\times}$. Proposition \ref{right_object} implies 
that the block corresponding to $\rbarss$ contains a character. Thus we are in the setting of \cite[\S10]{image}
and so $\rbarss\cong \chi \oplus \chi \omega$.
\end{proof}

Let $\psi: (\AfF )^{\times}/F^{\times}\rightarrow \OO^{\times}$ be a continuous character  such that $\psi$ is trivial of  $(\AfF )^{\times}\cap U^p$. To ease the notation we will use the same symbol to denote the 
restriction of $\psi$ to the intersection of $(\AfF)^{\times}$ with various subgroups of $(D\otimes_F \AfF)^{\times}$ with the exception of $\zeta:= \psi|_{F_{\pp}^{\times}}$. We will also view $\psi$ as 
a character of $G_{F,S}$ via the class field theory and denote by the same letter its restriction to various
decomposition groups.

 Let $\widehat{H}^1_{\psi}(U^p, L/\OO)$ be the maximal submodule of 
$\widehat{H}^1(U^p, L/\OO)$ on which $(\AfF)^{\times}$ acts
by the character $\psi$. By Chebotarev's density theorem this coincides with the common eigenspace of all Hecke operators $S_v$ for the eigenvalue $\psi(\Frob_v)$.

If $\lambda$ is a continuous representation of $U_p^{\pp}$ on a finite free $\OO$-module with central character $\psi$ then we let 
$$\widehat{H}^1_{\psi, \lambda}(U^{\pp}, L/\OO)_{\mm}:=\Hom_{U^{\pp}_p}(\lambda, \widehat{H}^1_{\psi}(U^p, L/\OO)_{\mm}).$$

\begin{lem} The map  \eqref{scholze_3} induces  isomorphisms of
$\TT(U^p)_{\mm}[G_{\Qp}\times D^{\times}_{\pp}]$-modules:
\begin{equation}\label{scholze_4}
\mathcal{S}^1( S_{\psi, \lambda}(U^\pp, L/\OO)_{\mm}) \cong  \widehat{H}^1_{\psi, \lambda}(U^\pp, L/\OO)_{\mm}.
\end{equation}
and of 
$\TT(U^p)_{\mm}[G_{\Qp}\times (D\otimes_\QQ \Qp)^{\times}]$-modules:
\begin{equation}\label{scholze_2}
\mathcal{S}^1( S_{\psi}(U^p, L/\OO))\cong \widehat{H}^1_{\psi}(U^p, L/\OO).
\end{equation}

\end{lem}
\begin{proof}
It follows from \cite[Prop.\,5.6]{newton_pJL} that $\widehat{H}^1_{\psi}(U^p, L/\OO)_{\mm}$ is injective in the category $\Mod^{\sm}_{U_p, \psi}(\OO)$. Arguing as in the proof of Lemma \ref{injective} we obtain that $\widehat{H}^1_{\psi, \lambda}(U^\pp, L/\OO)_{\mm}$ is injective
in $\Mod^{\sm}_{U_{\pp}, \zeta}(\OO)$. 
Hence, if $H$ is an open pro-$p$ subgroup of 
$$D_{\pp}^{\times,1}:=\{ g \in D_{\pp}^{\times}: \Nrd(g)=1\},$$ 
which intersects the centre of $D_{\pp}^{\times}$ trivially, $(\widehat{H}^1_{\psi, \lambda}(U^\pp, L/\OO)_{\mm})^{\vee}$
is a free $\OO[[H]]$-module of finite rank. Since $D_{\pp}^{\times}$ is a $p$-adic analytic group, we may choose 
$H$ to be torsion free, in which case $\OO[[H]]$ is an integral domain, and thus does not contain non-zero 
$\OO$-submodule on which $H$ acts trivially.  Thus, the cokernel of \eqref{scholze_3}, applied with $I$ equal  to the 
ideal generated by $S_v- \psi(\Frob_v)$ for all $v\not \in S$,  is zero and thus we get \eqref{scholze_4}. The 
last isomorphism follows from \eqref{scholze_4} using \eqref{beijing_5}.
\end{proof}

 Arguing as in \cite[Lemma 5.3.8]{emerton_lg} we obtain isomorphisms
\begin{equation}\label{emerton_1}
(\widehat{H}^1_{\psi}(U^p, L/\OO)_{\mm})^{K_p}[\varpi^n]\cong H^1_{\psi}(X(U^pK_p), \OO/\varpi^n)_{\mm},
\end{equation}
\begin{equation}\label{emerton_2}
(\widehat{H}^1_{\psi}(U^p, L/\OO)_{\mm})^{K_p^{\pp}}[\varpi^n]\cong \widehat{H}^1_{\psi}(U^pK^{\pp}_p, \OO/\varpi^n)_{\mm}.
\end{equation}
Thus if we let 
$$\widehat{H}^1_{\psi}(U^p, \OO)_{\mm}:=\varprojlim_n \widehat{H}^1_{\psi}(U^p, \OO/\varpi^n)_{\mm}, \quad \widehat{H}^1_{\psi, \lambda}(U^\pp, \OO)_{\mm}:= \varprojlim_n 
\widehat{H}^1_{\psi, \lambda}(U^p, \OO/\varpi^n)_{\mm},$$
then these are $\OO$-torsion free and by inverting $p$ we obtain admissible unitary Banach space representations of $(D\otimes_{\QQ}\Qp)^{\times}$ and $D_{\pp}^{\times}$ respectively. Moreover, 
we have natural homeomorphisms of $\OO$-modules
\begin{equation}\label{even_more_duals}
(\widehat{H}^1_{\psi}(U^p, \OO)_{\mm})^d\cong (\widehat{H}^1_{\psi}(U^p, L/\OO)_{\mm})^{\vee}, \quad (\widehat{H}^1_{\psi, \lambda}(U^\pp, \OO)_{\mm})^d\cong (\widehat{H}^1_{\psi, \lambda}(U^\pp, L/\OO)_{\mm})^{\vee}.
\end{equation}
In the following to ease the notation we will omit the outer brackets in the duals. 
\begin{prop}\label{scholze_compatible} There is an isomorphism of $\TT(U^p)_\mm[G_{\Qp}\times D_{\pp}^{\times}]$-modules 
$$\Shat^1( S_{\psi, \lambda}(U^{\pp}, \OO)_{\mm}^d)\cong \widehat{H}^1_{\psi, \lambda}( U^\pp, \OO)_{\mm}^d.$$
\end{prop}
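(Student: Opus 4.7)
The plan is to chase definitions and dualities, and then invoke the smooth-coefficient Scholze isomorphism \eqref{scholze_4}. By the definition of $\Shat^1$ in Section \ref{dualcat},
$$\Shat^1(S_{\psi, \lambda}(U^{\pp}, \OO)_\mm^d) = H^1_\et(\PP^1_{\Cp}, \FF_{(S_{\psi, \lambda}(U^\pp, \OO)_\mm^d)^\vee})^\vee,$$
so the key point is to identify the Pontryagin dual of $S_{\psi, \lambda}(U^\pp, \OO)_\mm^d$ with the smooth torsion representation $S_{\psi, \lambda}(U^\pp, L/\OO)_\mm$.

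To carry this out I would argue exactly as in the proof of Lemma \ref{trouble}. Since $S_{\psi, \lambda}(U^\pp, \OO)_\mm/\varpi^n \cong S_{\psi, \lambda}(U^\pp, \OO/\varpi^n)_\mm$ (stated just before \eqref{more_duals}), the identity \eqref{modpin} applied to $\Theta = S_{\psi, \lambda}(U^\pp, \OO)_\mm$ yields
$$S_{\psi, \lambda}(U^\pp, \OO)_\mm^d/\varpi^n \cong S_{\psi, \lambda}(U^\pp, \OO/\varpi^n)_\mm^\vee.$$
Taking Pontryagin duals again (which is an anti-equivalence) and passing to the filtered colimit in $n$ then produces a canonical isomorphism
$$(S_{\psi, \lambda}(U^\pp, \OO)_\mm^d)^\vee \cong \varinjlim_n S_{\psi, \lambda}(U^\pp, \OO/\varpi^n)_\mm = S_{\psi, \lambda}(U^\pp, L/\OO)_\mm.$$

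Substituting this into the definition of $\Shat^1$ and invoking the equivariant isomorphism \eqref{scholze_4}, I would obtain
$$\Shat^1(S_{\psi, \lambda}(U^\pp, \OO)_\mm^d) \cong \mathcal S^1(S_{\psi, \lambda}(U^\pp, L/\OO)_\mm)^\vee \cong \widehat{H}^1_{\psi, \lambda}(U^\pp, L/\OO)_\mm^\vee,$$
and a final appeal to \eqref{even_more_duals} would rewrite the right-hand side as $\widehat{H}^1_{\psi, \lambda}(U^\pp, \OO)_\mm^d$, yielding the desired identification.

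Every step is a naturality statement about functors that are either (anti-)equivalences of categories (Pontryagin and Schikhof duality) or equivariant by construction (Scholze's sheaf $\FF_\pi$ and the isomorphism \eqref{scholze_4}), so I do not anticipate any serious obstacle beyond carefully checking that the $\TT(U^p)_\mm$-, $G_{\Qp}$- and $D_\pp^\times$-actions are preserved at each stage. Thus the proof should be a routine assembly of earlier results, with \eqref{scholze_4} providing the only real content.
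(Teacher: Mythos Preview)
Your proposal is correct and follows essentially the same route as the paper: the paper's proof is the single sentence ``This follows from the definition of the functor $\Shat^1$ together with \eqref{scholze_3}, \eqref{scholze_4} and \eqref{even_more_duals},'' and you have simply unpacked these references, identifying $(S_{\psi,\lambda}(U^\pp,\OO)_\mm^d)^\vee$ with $S_{\psi,\lambda}(U^\pp,L/\OO)_\mm$ (which is exactly the content of \eqref{more_duals}) before applying \eqref{scholze_4} and \eqref{even_more_duals}.
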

\begin{proof} This follows from the definition of the functor $\Shat^1$ together with \eqref{scholze_4} and \eqref{even_more_duals}.
\end{proof}

\begin{prop}\label{projective} $\widehat{H}^1_{\psi}( U^p, \OO)_{\mm}^d$ is a finitely generated $\OO[[U_p]]$-module, which is projective in $\Mod^{\pro}_{U_p, \psi}(\OO)$. 
$\widehat{H}^1_{\psi, \lambda}( U^\pp, \OO)_{\mm}^d$ is a finitely generated $\OO[[U_{\pp}]]$-module, which is projective in  $\Mod^{\pro}_{U_\pp, \zeta}(\OO)$.
\end{prop}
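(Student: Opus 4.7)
The plan is to translate both assertions through Pontryagin duality, under which injective objects in $\Mod^{\sm}_{H, \chi}(\OO)$ correspond to projective objects in $\Mod^{\pro}_{H, \chi}(\OO)$, and admissible smooth representations of a compact $p$-adic analytic group $H$ with central character $\chi$ correspond to finitely generated $\OO[[H]]$-modules. The identifications recorded in \eqref{even_more_duals} realise the two Schikhof duals appearing in the proposition as the Pontryagin duals of the smooth representations $\widehat{H}^1_\psi(U^p, L/\OO)_\mm$ and $\widehat{H}^1_{\psi, \lambda}(U^\pp, L/\OO)_\mm$, so it suffices to verify injectivity and admissibility of these smooth representations in the appropriate categories.

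For the first statement, injectivity of $\widehat{H}^1_\psi(U^p, L/\OO)_\mm$ in $\Mod^{\sm}_{U_p, \psi}(\OO)$ is the result \cite[Prop.\,5.6]{newton_pJL} recalled earlier. Admissibility as a smooth $U_p$-representation is immediate from \eqref{emerton_1}: for any compact open $K_p\subset U_p$ and any $n\ge 1$, the $K_p$-invariants killed by $\varpi^n$ are isomorphic to $H^1_\et(X(U^pK_p), \OO/\varpi^n)_\mm$, and this is a finitely generated $\OO/\varpi^n$-module since $X(U^pK_p)$ is a proper Shimura curve. Pontryagin duality then yields both the projectivity of $\widehat{H}^1_\psi(U^p, \OO)_\mm^d$ in $\Mod^{\pro}_{U_p, \psi}(\OO)$ and its finite generation over $\OO[[U_p]]$.

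For the $\lambda$-version the same strategy applies after restricting from $U_p = U_\pp\cdot U_p^\pp$ to $U_\pp$. Injectivity of $\widehat{H}^1_{\psi, \lambda}(U^\pp, L/\OO)_\mm$ in $\Mod^{\sm}_{U_\pp, \zeta}(\OO)$ was already noted in the paragraph following \eqref{scholze_4}; concretely, the adjunction
$$\Hom_{U_\pp}\bigl(-, \Hom_{U_p^\pp}(\lambda, \widehat{H}^1_\psi(U^p, L/\OO)_\mm)\bigr) \cong \Hom_{U_p}\bigl(-\otimes_\OO \lambda, \widehat{H}^1_\psi(U^p, L/\OO)_\mm\bigr)$$
together with the $\OO$-flatness of $\lambda$ allows one to deduce it from the injectivity already established in $\Mod^{\sm}_{U_p, \psi}(\OO)$. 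Admissibility as a $U_\pp$-representation is reduced to the first part by taking $H$-invariants under a compact open $H\subseteq U_\pp$, killing $\varpi^n$, and choosing an open subgroup $V_p^\pp\subseteq U_p^\pp$ which acts trivially on $\lambda/\varpi^n$; this rewrites the resulting space as $\Hom_{U_p^\pp/V_p^\pp}(\lambda/\varpi^n, \widehat{H}^1_\psi(U^p, L/\OO)_\mm^{H\cdot V_p^\pp}[\varpi^n])$ and invokes admissibility at the $U^p$-level. Pontryagin duality then delivers the second assertion. The only small point requiring care, which I do not expect to be a real obstacle, is central-character bookkeeping: since $\lambda$ has central character equal to the restriction of $\psi$ to $U_p^\pp\cap(\AfF)^\times$, the residual action of the centre of $U_\pp$ on the $\Hom$-space is by $\zeta=\psi|_{F_\pp^\times}$, as required.
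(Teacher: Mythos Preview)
Your proposal is correct and follows the same approach as the paper, which simply refers back to the injectivity and admissibility statements for $\widehat{H}^1_{\psi}(U^p, L/\OO)_{\mm}$ and $\widehat{H}^1_{\psi, \lambda}(U^\pp, L/\OO)_{\mm}$ established in the preceding paragraph (via \cite[Prop.\,5.6]{newton_pJL} and \eqref{emerton_1}, \eqref{emerton_2}) and invokes \eqref{even_more_duals} to pass to the dual side. You have merely spelled out in more detail what the paper compresses into a single sentence.
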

\begin{proof} The projectivity and finite generation follow  from \eqref{even_more_duals} together with injectivity and admissibility statements for $\widehat{H}^1_{\psi }(U^p, L/\OO)_{\mm}$ and $\widehat{H}^1_{\psi, \lambda}(U^\pp, L/\OO)_{\mm}$ explained above.
\end{proof}

 We assume $\psi$ and $\rhobar$ are such that $S_{\psi}(U^p, L/\OO)_{\mm}\neq 0$, see Lemma \ref{K-injective}. After  twisting by a character we may assume that the restriction of $\psi$ to $(\AfF)^{\times}\cap U_p$ is  a locally algebraic character. 

 \begin{thm}\label{non-vanish} 
The functor $\Sc^1$ is not identically zero on $\Mod^{\ladm}_G(\OO)_{\rbarss}$.
\end{thm}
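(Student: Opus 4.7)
The plan is to apply the functor $\Sc^1$ to $\tau := S_\psi(U^p, L/\OO)_\mm$. By Proposition \ref{right_object}, $\tau$ lies in $\Mod^{\ladm}_G(\OO)_{\rbarss}$, and our standing hypothesis ensures $\tau \neq 0$. Scholze's isomorphism \eqref{scholze_3} (equivalently Proposition \ref{scholze_compatible} after dualising) then gives
\[
\Sc^1(\tau) \cong \widehat{H}^1_\psi(U^p, L/\OO)_\mm,
\]
so the theorem reduces to the assertion that this completed cohomology of Shimura curves does not vanish.

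To produce a nonzero class I will invoke the classical Jacquet--Langlands correspondence. The nonvanishing of $S_\psi(U^p, L/\OO)_\mm$ witnesses that $\rhobar$ is modular for the definite quaternion algebra $D_0$. Standard level-raising results, whose hypotheses are comfortably satisfied in the generic regime $\chi_1\chi_2^{-1}\neq\omega^{\pm 1}$ assumed throughout, then furnish a classical cuspidal automorphic representation $\pi$ of $(D_0\otimes_F\mathbb A_F)^\times$ with central character compatible with $\psi$, tame level contained in $U^p$ (possibly after shrinking $U^p$ at auxiliary places, which is harmless in view of \eqref{beijing_3}--\eqref{beijing_4}), residual Galois representation isomorphic to $\rhobar$, and local component $\pi_\pp$ a twist of the Steinberg representation of $\GL_2(\Qp)$. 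Classical Jacquet--Langlands transfers $\pi$ to a cuspidal automorphic representation $\pi^{\mathrm{JL}}$ of $(D\otimes_F\mathbb A_F)^\times$ agreeing with $\pi$ at every finite place away from $\pp$, and $\pi^{\mathrm{JL}}$ contributes nontrivially to $H^1_\et(X(K)_{\overline F},L)_\mm$ for a suitable open compact $K\subset U$ with $K^\pp\subset U^p$. By \eqref{emerton_1} this nonvanishing passes to $\widehat H^1_\psi(U^p,L/\OO)_\mm\otimes_\OO L$, and hence to the integral module.

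The principal difficulty is the level-raising step: producing a discrete-series-at-$\pp$ form within the Hecke eigensystem determined by $\rhobar$ without altering the tame level $U^p$ in an essential way. With $\rbar$ reducible, $\rhobar$ globally irreducible, and $\chi_1\chi_2^{-1}\neq\omega^{\pm 1}$, this is a classical consequence of Ihara's lemma together with Diamond--Taylor-style congruences; the remainder is routine Eichler--Shimura combined with Scholze's identification \eqref{scholze_3}. One must also ensure that the preliminary reduction to $\psi$ whose restriction to $(\AfF)^\times\cap U_p$ is locally algebraic, together with the twist used in that reduction, is respected at each step, but this is a cosmetic bookkeeping matter since all the functors in play are equivariant for twists by characters of $F^\times\backslash(\AfF)^\times$.
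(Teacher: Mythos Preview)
Your overall architecture matches the paper's: apply $\Sc^1$ to $S_\psi(U^p, L/\OO)_\mm$, use Proposition \ref{right_object} and \eqref{scholze_3} to reduce to the non-vanishing of $\widehat{H}^1_\psi(U^p,L/\OO)_\mm$, and witness that non-vanishing by producing a classical form on $D_0^\times$ that is discrete series at $\pp$ and transferring it via classical Jacquet--Langlands. The difference lies entirely in how you manufacture that discrete-series-at-$\pp$ form, and here your argument has a genuine gap.

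You invoke Ihara's lemma and Diamond--Taylor congruences to produce a form that is a twist of Steinberg at $\pp$. There are two problems. First, those level-raising results concern primes $\ell\neq p$, whereas $\pp$ lies above $p$; ``standard level-raising'' does not apply. Second, and more seriously, a twist of Steinberg at $\pp$ has local Galois representation whose semisimplified reduction is of the form $\bar\chi\oplus\bar\chi\omega$, so $\chi_1\chi_2^{-1}=\omega^{\pm 1}$---precisely the case you have excluded. Thus no form in your Hecke eigensystem can be Steinberg (or a twist thereof) at $\pp$ under the generic hypothesis.

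The paper sidesteps both issues. It uses Lemma \ref{K-injective}: the restriction of $S_\psi(U^p,L/\OO)_\mm$ to $K=\GL_2(\Zp)$ is injective in $\Mod^{\sm}_{K,\zeta}(\OO)$, so its Banach dual is $K$-projective. This forces every locally algebraic $K$-type with the right central character to occur, in particular a \emph{supercuspidal} type $\sigma$. A Deligne--Serre style eigenvector in $\Hom_{U_p^\pp K}(\lambda\otimes\sigma\otimes\Sym^bL^2\otimes\det^a,\,S_\psi(U^p,\OO)_\mm\otimes L)$ then gives a classical form on $D_0^\times$ that is supercuspidal at $\pp$, hence discrete series, and classical Jacquet--Langlands transfers it to $D^\times$. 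No level-raising is needed, and the supercuspidal (rather than Steinberg) choice is compatible with any $\rbarss$.
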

\begin{proof} Let $\psi$ be such that $S_{\psi}(U^{p}, L/\OO)_\mm$ is non-zero and the restriction 
of $\psi$ to $(\AfF)^{\times}\cap U_p$ is  a locally algebraic character. We may write $\psi= \psi_{\sm} \psi_{\alg}$, 
where $\psi_{\sm}$ is a smooth character of $(\AfF)^{\times}\cap U_p$  and $\psi_{\alg}$ is the restriction 
to $(\AfF)^{\times}\cap U_p$ of the central character of an irreducible algebraic representation $W_{\alg}$ evaluated at $L$ 
of the algebraic group $(\Res^{F}_{\QQ} D_0^{\times})\otimes_{\QQ} L$, where $\Res$ denotes the restriction of scalars. 

Since $S_{\psi}(U^p, L/\OO)_\mm$  is an object 
of $\Mod^{\ladm}_G(\OO)_{\rbarss}$ by Proposition \ref{right_object}, using \eqref{scholze_2} 
it is enough to check that $\widehat{H}^1_{\psi}(U^p, L/\OO)_{\mm}$ is non-zero, and \eqref{even_more_duals} implies that it is enough to check that $\widehat{H}^1_\psi(U^p, \OO)_{\mm}$
is non-zero. The locally algebraic vectors in the Banach spaces $\widehat{H}^1_\psi(U^p, \OO)_{\mm}\otimes_{\OO} L$ and  $S_{\psi}(U^p, \OO)_{\mm}\otimes_{\OO} L$ are related to classical automorphic forms on $D^{\times}$ and $D_0^{\times}$, respectively, see Theorem 5.3 in
\cite{newton_pJL} and \cite[\S 3]{interpolate}. The assumption that $S_{\psi}(U^p, L/\OO)_{\mm}\neq 0$ implies that $S_{\psi}(U^p, \OO)_{\mm}\otimes_{\OO} L$ is non-zero, and 
if $V_p$ is an open pro-$p$ subgroup of of $U_p$ then it follows from Lemma \ref{K-injective} that $S_{\psi}(U^p, \OO)_{\mm}\otimes_{\OO} L$ as 
$V_p$-representation is isomorphic to a finite direct sum of copies of $C_{\psi}(V_p, L)$ in the notation of Section \ref{lg}. Thus if $\gamma$ is a representation of 
$V_p$ on a finite dimensional $L$-vector space with the central character $\psi$ then $\Hom_{V_p}(\gamma, S_{\psi}(U^p, \OO)_{\mm}\otimes_{\OO} L)$ is non-zero.

We choose $V_p$ of the form $V_p^{\pp}\times V_{\pp}$, such that $\psi_{\sm}$ is trivial on $V_p \cap (\AfF)^{\times}$ and 
$V_{\pp}= \bigl ( \begin{smallmatrix} 1+ \pp^m & \pp^{m-1} \\ \pp^m & 1+\pp^m\end{smallmatrix} \bigr)$, for some $m\ge 1$. 
Let $\theta: V_{\pp} \rightarrow L^{\times}$ be the character, which maps   $\bigl ( \begin{smallmatrix} 1+ p^m a& p^{m-1}b  \\ p^m c & 1+p^m d\end{smallmatrix} \bigr)$ to 
$\alpha( b+c)$, where $\alpha: \OO_{F_{\pp}}/ \pp^{m-1} \rightarrow L^{\times}$ is any non-trivial additive character. Then $\theta$ is trivial on $V_{\pp} \cap (\AfF)^{\times}$ and is 
a supercuspidal type, by which we mean that if $\pi$ is a smooth irreducible representation of $\GL_2(F_{\pp})$ and $\Hom_{V_{\pp}}( \theta, \pi)\neq 0$ then $\pi$ is supercuspidal, see 
\cite[Prop.\,7.1]{scholze} or \cite[Prop.\,3.19]{jep} for a more general setting. We extend $\theta$ to a character of $V_p$ by mapping $V_p^{\pp}$ to $1$. If we let 
$\gamma:= \theta \otimes W_{\alg}$ as a representation of $V_p$ then it has a central character $\psi$ by construction and thus $\Hom_{V_p}(\gamma, S_{\psi}(U^p, \OO)_{\mm}\otimes_{\OO} L)$ 
is non-zero as explained above.  An eigenvector for the Hecke operators on this finite dimensional vector space will give us a classical  automorphic form on $D_0^{\times}$. Since $\theta$ is a supercuspidal 
type the automorphic form will be  supercuspidal at $\pp$, see the proof of \cite[Thm.\,5.1]{jep} or the proof of \cite[Cor.\,7.3]{scholze}. Since the automorphic form is supercuspidal at $\pp$, we may transfer it to an automorphic 
form on $D^{\times}$ by the classical Jacquet--Langlands correspondence, which in turn gives us a
non-zero vector in the locally algebraic vectors of $\widehat{H}^1_\psi(U^p, \OO)_{\mm}\otimes_{\OO} L$.
\end{proof}

\begin{remar} If $\rbar$ is irreducible then $\BB_{\rbar}$ consists of one isomorphism class 
of a supersingular representation $\pi$. It follows from Theorem \ref{non-vanish} that 
$\mathcal{S}^1(\pi)\neq 0$. This implies that $\mathcal{S}^1(\pi')\neq 0$ for any $\pi'\in \Mod^{\ladm}_{G}(\OO)_{\rbar}$.
\end{remar}

From now on we assume that $\rbarss=\chi_1\oplus \chi_2$ with $\chi_1\chi_2^{-1}\neq \omega^{\pm 1}$. Let $\pi_1$, $\pi_2$ be the principal series representations defined in \eqref{def_pi}. Proposition \ref{enough} implies that at least one of $\Shat^1(\pi_1^{\vee})$ 
and $\Shat^1(\pi_2^{\vee})$ is non-zero.

 For a finitely generated $\OO[[U_{\pp}]]$-module $M$  
we will abbreviate $\delta(M):=\delta_{\OO[[U_{\pp}]]}(M)$. If $\mathrm B$ is an admissible  unitary 
Banach space representation of $D_{\pp}^{\times}$, then we choose an open bounded $D_{\pp}^{\times}$-invariant lattice $\Theta$ in $\mathrm B$ and an open uniform pro-$p$ group $K$ of $D_{\pp}^{\times}$. Then $\Theta^d\otimes_{\OO} L$ is a finitely generated module over the Auslander regular ring $\OO[[K]]\otimes_\OO L$. We let $\delta(\mathrm B)$ be the dimension of 
$\Theta^d\otimes_{\OO} L$ over $\OO[[K]]\otimes_\OO L$ and note that it is equal to $\delta_{\OO[[K]]}( \Theta^d)-1$.  We will sometimes refer to $\delta(M)$ and $\delta(\mathrm B)$ as the $\delta$-\textit{dimension}.

\begin{prop}\label{maximum} The maximum of $\delta(\Shat^1(\pi_1^{\vee}))$ and $\delta(\Shat^1(\pi_2^{\vee}))$ is equal to $1$.
\end{prop}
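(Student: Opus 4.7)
The plan is to use flatness of $M := S_{\psi,\lambda}(U^\pp, \OO)_\mm^d$ over a suitably chosen formally smooth subring of the Hecke algebra, transfer this flatness through Scholze's functor $\Shat^1$ to the completed cohomology side, and then combine miracle flatness with the exactness of $\Shat^1$ on $\dualcat(\OO)_{\rbarss}$ to read off the $\delta$-dimension of $\Shat^1$ of each principal series.

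First I would choose $(\psi, \lambda, U^\pp)$ so that $S_{\psi,\lambda}(U^\pp, L/\OO)_\mm \neq 0$, which is possible by Theorem~\ref{non-vanish} together with \eqref{beijing_4}. Proposition~\ref{find_flat} produces a subring $A \cong \OO[[x, y]]$ of the image $R$ of the Hecke algebra in $\End_G^{\cont}(S_{\psi,\lambda}(U^\pp, \OO)_\mm)$, of Krull dimension $3$, with $R$ finite over $A$ and $M$ flat over $A$. By Proposition~\ref{test_flatness}, $\Shat^1(M)$ is flat over $A$, and Proposition~\ref{scholze_compatible} identifies it with $N := \widehat{H}^1_{\psi,\lambda}(U^\pp, \OO)_\mm^d$. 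The same argument as in the proof of Proposition~\ref{find_flat} applied to $N$, using Proposition~\ref{projective} and the fact that $\dim D_\pp^\times = 4$ while its centre is $1$-dimensional, shows that $N$ is Cohen--Macaulay of $\delta$-dimension $4$ over $\OO[[K]]$ for $K$ a uniform pro-$p$ subgroup of $D_\pp^\times$. Plugging into the miracle flatness equation \eqref{eqn_dim} of Proposition~\ref{use_GN} yields $\delta(k \otimes_A N) = 4 - 3 = 1$, and Proposition~\ref{test_flatness} applied with $\md = k$ identifies $k \otimes_A N$ with $\Shat^1(\tau^\vee)$, where $\tau^\vee := k \otimes_A M$.

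By the argument of Corollary~\ref{find_A} together with Lemma~\ref{fibre_finite}, the fibre $\tau^\vee$ is an object of $\dualcat(\OO)_{\rbarss}$ of finite length, so its composition factors lie in $\{\pi_1^\vee, \pi_2^\vee\}$. Exactness of $\Shat^1$ on $\dualcat(\OO)_{\rbarss}$ (Corollary~\ref{ludwig_cor}) together with the behaviour of $\delta$ on short exact sequences gives
\begin{equation*}
1 \;=\; \delta(\Shat^1(\tau^\vee)) \;=\; \max\bigl\{\delta(\Shat^1(\pi_i^\vee)) : \pi_i^\vee \text{ is a composition factor of } \tau^\vee\bigr\}.
\end{equation*}
Hence every $\pi_i^\vee$ appearing in $\tau^\vee$ satisfies $\delta(\Shat^1(\pi_i^\vee)) \leq 1$, and at least one of them actually attains the value $1$; in particular $\max\{\delta(\Shat^1(\pi_1^\vee)), \delta(\Shat^1(\pi_2^\vee))\} \geq 1$.

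To finish, I would show that the upper bound $\delta(\Shat^1(\pi_i^\vee)) \leq 1$ holds for both $i = 1, 2$ by varying the input data $(\psi, \lambda, U^\pp)$ and realising each $\pi_i^\vee$ as a composition factor of the corresponding fibre $\tau^\vee$. Concretely, for each ordering of $(\chi_1, \chi_2)$ one produces a classical automorphic form on $D_0^\times$ whose local component at $\pp$ is a smooth unramified principal series with the Satake parameters matching $\pi_i$ under mod-$p$ local Langlands, exactly as in the proof of Proposition~\ref{right_object}; the capture machinery of \cite{CDP} and \cite{blocksp2} then exhibits the corresponding $\pi_i^\vee$ inside a suitable fibre. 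The main obstacle is this last realisation step, which is a Serre-weight-type compatibility asserting that both orderings of the pair $(\chi_1, \chi_2)$ are genuinely attained by some $A$-fibre and requires a careful tracking of Hecke eigensystems attached to classical forms and their mod-$p$ reductions. Granted it, applying the fibre computation separately for each $i$ yields $\delta(\Shat^1(\pi_i^\vee)) \leq 1$ for $i = 1, 2$, and combined with the lower bound we conclude that the maximum equals $1$.
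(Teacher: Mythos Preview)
Your core argument is exactly the paper's: pick $A$ from Proposition~\ref{find_flat}, transfer $A$-flatness of $M=S_{\psi,\lambda}(U^\pp,\OO)_\mm^d$ to $N=\widehat H^1_{\psi,\lambda}(U^\pp,\OO)_\mm^d$ via Propositions~\ref{test_flatness} and~\ref{scholze_compatible}, compute $\delta(N)=4$ from Proposition~\ref{projective}, read off $\delta(k\otimes_A N)=1$ from \eqref{eqn_dim}, and then use exactness of $\Shat^1$ to filter $k\otimes_A N=\Shat^1(k\otimes_A M)$ by copies of $\Shat^1(\pi_1^\vee)$ and $\Shat^1(\pi_2^\vee)$. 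The paper stops right there and declares the proposition proved.

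Where you diverge is the last paragraph, and that is where the proposal goes wrong. The paper does \emph{not} vary $(\psi,\lambda,U^\pp)$, does not produce a separate classical form for each ordering of $(\chi_1,\chi_2)$, and does not invoke capture. More importantly, the tools you reach for are mismatched: capture in \cite{CDP}, \cite{blocksp2} is a density statement for locally algebraic vectors in a Banach space and has nothing to do with exhibiting a prescribed irreducible $\pi_i$ inside a mod~$p$ fibre; what you are really asking for is a Serre-weight modularity statement. If you want to justify that both $\pi_i^\vee$ occur in $k\otimes_A M$, the clean route is to observe that $A$-flatness forces the irreducible subquotients of $k\otimes_A M$ to coincide with those of $M$ itself (look at the $\mm_A$-adic filtration), so the question is simply whether both $\pi_1,\pi_2$ occur as subquotients of $S_{\psi,\lambda}(U^\pp,L/\OO)_\mm$; this is a single statement about one fixed global datum, not something requiring a family of global realisations. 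In short: keep your first two paragraphs, which match the paper, and drop the third.
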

\begin{proof} Let $A$ be the ring in Proposition \ref{find_flat} and let $K$ be an open uniform pro-$p$ subgroup of $D_\pp^\times$. It follows from Propositions \ref{test_flatness} and \ref{scholze_compatible} that 
$\widehat{H}^1_{\psi, \lambda}( U^\pp, \OO)_{\mm}^d$ is $A$-flat.  Since
it is non-zero, the fibre  $\mathcal F:=k \otimes_A \widehat{H}^1_{\psi, \lambda}( U^\pp, \OO)_{\mm}^d$ is also non-zero and \eqref{eqn_dim} implies that its $\delta$-dimension is equal to 
$\delta(\widehat{H}^1_{\psi, \lambda}( U^\pp, \OO)_{\mm}^d)- 3.$
It follows from Proposition \ref{projective} that the $\delta$-dimension of 
$\widehat{H}^1_{\psi, \lambda}( U^\pp, \OO)_{\mm}^d$ is equal to $4$, see the argument in the proof of the analogous statement for $S_{\psi, \lambda}(U^{\pp}, \OO)_{\mm}^d$ in Proposition \ref{find_flat}.  Hence the $\delta$-dimension of the fibre is equal to $1$.
 
 As explained in the proof of that Proposition \ref{find_flat} the fibre $k\otimes_A S_{\psi, \lambda}(U^{\pp}, \OO)_{\mm}^d$ is of finite length in $\dualcat(\OO)_{\rbarss}$ and all irreducible 
 subquotients are isomorphic to either $\pi^{\vee}_1$ or $\pi_2^{\vee}$. Moreover, both $\pi^{\vee}_1$ and $\pi_2^{\vee}$ 
 occur as subquotients. Since $\Shat^1$ is exact by Corollary \ref{ludwig_cor} we deduce that $\mathcal F$ has a filtration of finite length with graded pieces 
 isomorphic to either $\Shat^1(\pi_1^{\vee})$ or $\Shat^1(\pi_2^{\vee})$, which implies the assertion. 
\end{proof}

\begin{cor}\label{delta_Pi} Let $r: G_{\Qp}\rightarrow \GL_2(L)$ be a continuous representation 
with $\det r= \psi \varepsilon^{-1}$ and $\rbarss= \chi_1\oplus \chi_2$. Let $\Pi\in \Ban^{\adm}_{G}(L)$
correspond to $r$ via the $p$-adic local Langlands correspondence for $\GL_2(\Qp)$. Then 
$\Shat^1(\Pi)\neq 0$ and $\delta(\Shat^1(\Pi))=1$. 
\end{cor}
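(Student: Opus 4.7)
The plan is to invert the logic of Proposition \ref{enough}: starting from the already known non-vanishing of $\Sc^1$ on the block $\BB_{\rbarss}$ and the bound from Proposition \ref{maximum}, deduce both the non-vanishing of $\Shat^1(\Pi)$ and the exact value of its $\delta$-dimension by tracking the same short exact sequences used in the proof of Proposition \ref{enough}.

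First I pick an open bounded $G$-invariant lattice $\Theta \subset \Pi$. The hypothesis $\rbarss = \chi_1 \oplus \chi_2$ places $\Pi$ in $\Ban^{\adm}_G(L)_{\rbarss}$, so every composition factor of $\Theta/\varpi$ lies in $\BB_{\rbarss} = \{[\pi_1], [\pi_2]\}$. Since $\chi_1 \chi_2^{-1} \neq \omega^{\pm 1}$, each $\pi_i$ is an irreducible principal series with $\pi_i^{\SL_2(\Qp)} = 0$; by left-exactness of $\SL_2(\Qp)$-invariants, $(\Theta/\varpi)^{\SL_2(\Qp)} = 0$, so \eqref{H0} gives $H^0_\et(\PP^1_{\Cp}, \FF_{\Theta/\varpi}) = 0$, while Corollary \ref{cor_ludwig} gives $H^2_\et(\PP^1_{\Cp}, \FF_{\Theta/\varpi}) = 0$. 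Feeding this into the long exact sequence \eqref{long_exact} for $0 \to \Theta^d \xrightarrow{\varpi} \Theta^d \to \Theta^d/\varpi \to 0$ and applying topological Nakayama's lemma, I obtain $\Shat^0(\Theta^d) = \Shat^2(\Theta^d) = 0$ together with the short exact sequence
$$0 \to \Shat^1(\Theta^d) \xrightarrow{\varpi} \Shat^1(\Theta^d) \to H^1_\et(\PP^1_{\Cp}, \FF_{\Theta/\varpi})^\vee \to 0;$$
in particular $\Shat^1(\Theta^d)$ is $\OO$-torsion free.

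Proposition \ref{maximum} combined with Lemma \ref{dim_0} tells me that at least one $\Sc^1(\pi_i)$ is infinite-dimensional over $k$, hence non-zero; the compatibility of $p$-adic and mod-$p$ Langlands correspondences (\cite[Thm.\,11.1]{image} in the non-ordinary case, with an analogous statement in the ordinary case) ensures both $\pi_1$ and $\pi_2$ actually occur as composition factors of $\Theta/\varpi$. Exactness of $\Sc^1$ on the block (Corollary \ref{ludwig_cor}) then forces $\Sc^1(\Theta/\varpi) \neq 0$, and topological Nakayama applied to the sequence above gives $\Shat^1(\Theta^d) \neq 0$; being $\OO$-torsion free and finitely generated over $\OO[[K]]$ for $K$ a uniform pro-$p$ open subgroup of $D_\pp^\times$, inverting $p$ yields $\Shat^1(\Pi) = \Shat^1(\Theta^d)^d \otimes_\OO L \neq 0$. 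For the $\delta$-dimension I use the standard Auslander-regular dimension shifts: for $M = \Shat^1(\Theta^d)$ finitely generated over $\OO[[K]]$ and $\OO$-torsion free, $\varpi$ is $M$-regular and
$$\delta_{\OO[[K]] \otimes_\OO L}(M \otimes_\OO L) = \delta_{\OO[[K]]}(M) - 1 = \delta_{k[[K]]}(M/\varpi),$$
the second equality by \cite[Lemma A.19]{gee_newton}. Thus $\delta(\Shat^1(\Pi)) = \delta_{k[[K]]}(\Sc^1(\Theta/\varpi)^\vee)$; the graded-pieces filtration together with Proposition \ref{maximum} bounds this by $1$ from above, while the occurrence of the $\pi_i$ with $\delta(\Sc^1(\pi_i)^\vee) = 1$ gives the lower bound, yielding $\delta(\Shat^1(\Pi)) = 1$.

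The main obstacle is the Langlands-theoretic input ensuring that \emph{both} irreducible principal series $\pi_1$ and $\pi_2$ genuinely appear in the reduction $\Theta/\varpi$; without it, one cannot exclude the scenario in which the $\pi_i$ appearing is precisely the one for which $\Sc^1(\pi_i)$ might be trivial, in which case both the non-vanishing and the dimension calculation collapse. In the absolutely irreducible non-ordinary case this is \cite[Thm.\,11.1]{image}; ordinary $\Pi$ require a separate analysis of how the $p$-adic Langlands correspondence reduces mod $\varpi$, and this is really the only non-formal step in the argument.
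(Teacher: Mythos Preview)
Your proof is correct and follows the same approach as the paper's: identify $(\Theta/\varpi)^{\mathrm{ss}}\cong\pi_1\oplus\pi_2$, use exactness of $\Shat^1$ on the block together with Proposition~\ref{maximum} to get $\delta(\Shat^1(\Theta^d/\varpi))=1$, and then pass to $\Shat^1(\Theta^d)$ and $\Shat^1(\Pi)$ via $\OO$-torsion freeness. Where the paper invokes Proposition~\ref{test_flatness} with $A=\OO$ in one line, you unwind this into the long exact sequence argument from Proposition~\ref{enough}; where the paper simply cites \cite[\S 11]{image} for the reduction, you single out \cite[Thm.\,11.1]{image}. Your closing worry about ``ordinary $\Pi$'' is slightly misplaced: in the statement $\Pi$ corresponds to a two-dimensional $r$, so $\Pi$ is never a single ordinary constituent, and \cite[\S 11]{image} already gives $(\Theta/\varpi)^{\mathrm{ss}}\cong\pi_1\oplus\pi_2$ uniformly, which is all you need.
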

\begin{proof} Let $\Theta$ be an open bounded $G$-invariant lattice in $\Pi$. Then 
$(\Theta\otimes_{\OO} k)^{\mathrm{ss}} \cong \pi_1\oplus \pi_2$, see \cite[\S 11]{image}. We may assume that
$\Theta\otimes_{\OO} k$ is an extension of $\pi_1$ by $\pi_2$. Then $\Shat^1(\Theta^d/\varpi)$ is an extension of  $\Shat^1(\pi_2^{\vee})$ by $\Shat^1(\pi_1^{\vee})$. Proposition \ref{maximum} implies that 
$\delta( \Shat^1(\Theta^d/\varpi))=1$. Proposition \ref{test_flatness} applied with $A=\OO$ implies that
$\Shat^1(\Theta^d)$ is $\OO$-torsion free and 
$\delta(\Shat^1(\Theta^d))= \delta(\Shat^1(\Theta^d/\varpi))+1=2$.  Hence, $\delta(\Shat^1(\Pi))=1$.
\end{proof}

\begin{lem}\label{dim0implies}Let $\mathrm B$ be an admissible unitary $D_\pp^\times$-representation.   Then $\mathrm B$ is a finite dimensional $L$-vector space if and only if its $\delta$-dimension is $0$.  \end{lem}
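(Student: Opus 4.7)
I will reduce the statement to Lemma \ref{dim_0} by descending from $\OO[[K]] \otimes_\OO L$ down to $k[[K]]$. First I fix an open uniform pro-$p$ subgroup $K$ of $D_\pp^\times$ and an open bounded $D_\pp^\times$-invariant $\OO$-lattice $\Theta \subset \mathrm B$. Then $M := \Theta^d$ is a finitely generated $\OO[[K]]$-module, and it is $\OO$-torsion free since $\Theta$ is an $\OO$-lattice in the $L$-Banach space $\mathrm B$; the hypothesis reads $\delta_{\OO[[K]] \otimes_\OO L}(M \otimes_\OO L) = 0$.

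The crucial comparison I would then establish is
\begin{equation*}
\delta_{\OO[[K]] \otimes_\OO L}(M \otimes_\OO L) = \delta_{\OO[[K]]}(M) - 1,
\end{equation*}
valid for any finitely generated $\OO$-torsion free $\OO[[K]]$-module $M$. This reflects both that $\mathrm{gld}(\OO[[K]] \otimes_\OO L) = \mathrm{gld}(\OO[[K]]) - 1$ and that $\OO$-torsion freeness ensures that the first non-vanishing $\Ext^i_{\OO[[K]]}(M, \OO[[K]])$ survives the localization at $\varpi$, so that the grade is preserved. Given this identity, the hypothesis forces $\delta_{\OO[[K]]}(M) = 1$. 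Since $\varpi$ is central and acts regularly on $M$, the short exact sequence $0 \to M \xrightarrow{\varpi} M \to M/\varpi M \to 0$ together with the standard fact that quotienting by a regular central element lowers $\delta$ by exactly one (cf.\ \cite[Lem.\,A.15]{gee_newton}) yields $\delta_{\OO[[K]]}(M/\varpi M) = 0$, which coincides with $\delta_{k[[K]]}(M/\varpi M)$ by \cite[Lem.\,A.19]{gee_newton}.

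Lemma \ref{dim_0} then forces $M/\varpi M$ to be a finite-dimensional $k$-vector space, and the topological Nakayama lemma for compact $\OO[[K]]$-modules \cite[Cor.\,1.5]{brumer} implies that $M$ is finitely generated over $\OO$. Being in addition $\OO$-torsion free, $M$ is a free $\OO$-module of finite rank; by Schikhof double duality $\Theta \cong M^d$ is then also free of finite rank over $\OO$, so $\mathrm B = \Theta \otimes_\OO L$ is finite-dimensional over $L$, as claimed.

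The main obstacle is the comparison formula. The inequality $\delta_{\OO[[K]] \otimes_\OO L}(M \otimes_\OO L) \le \delta_{\OO[[K]]}(M) - 1$ is formal (since inverting $\varpi$ cannot increase the grade, and the global dimension drops by one), but the reverse inequality genuinely uses the $\OO$-torsion freeness of $M$; without it, components of the support of $M$ contained in the vanishing locus of $\varpi$ could vanish after localization. I would verify this step via the Auslander condition on the grade of the minimal non-vanishing Ext module, or by a direct Hilbert--Samuel computation on the graded ring $\gr \OO[[K]] \cong k[T, x_1, \ldots, x_{\dim K}]$.
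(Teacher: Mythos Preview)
Your proposal is correct and follows essentially the same approach as the paper: reduce to Lemma \ref{dim_0} by passing from $\mathrm B$ to $\Theta^d$ to $\Theta^d/\varpi$, then use Nakayama. The paper's proof simply asserts that $\delta(\mathrm B)=0$ forces $\delta(\Theta^d)=1$ and $\delta(\Theta^d/\varpi)=0$ without further comment, whereas you spell out both steps (the comparison with the localization at $\varpi$ using $\OO$-torsion freeness, and the drop by one upon quotienting by the regular element $\varpi$) with explicit references; the final Nakayama step is also left implicit in the paper.
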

\begin{proof} Let $K$ be an open uniform pro-$p$ subgroup of $D_\pp^\times$ and let $\Theta$  be an open bounded $D_\pp^\times$-invariant lattice in $\mathrm B$. Then
$$\delta(\mathrm B)= \delta(\Theta^d)-1 = \delta( \Theta^d/ \varpi)$$
and the assertion follows from Lemma \ref{dim_0}.
 \end{proof}

\begin{cor} If  $\Pi$ is  as in Corollary \ref{delta_Pi}  then $\Shat^1(\Pi)$ is of finite length 
  in the category of admissible unitary $L$-Banach space representations of $D_{\pp}^{\times}$ if and only if it has finitely many irreducible subquotients, which are finite dimensional as $L$-vector 
  spaces. 
 \end{cor}
 \begin{proof} Since $\Shat^1(\Pi)$ is admissible it has an irreducible subrepresentation, which we denote by $\mathrm B_1$, see \cite[Lem.\,5.8]{comp}. If $\Shat^1(\Pi)$ is not of finite length then by repeating the argument we obtain an ascending chain of Banach space subrepresentations $\{\mathrm B_i\}_{i\ge 0}$ such that $\mathrm B_0=0$ and the quotients $\mathrm B_{i+1}/\mathrm B_i$ for $i\ge 0$ are irreducible.  Let $K$ be a compact open subgroup of $D_{\pp}^{\times}$ without torsion. Then $M_i:=(\Shat^1(\Pi)/ \mathrm B_i)^d$ for $i\ge 0$ is a descending chain of finitely generated modules over a noetherian Auslander regular ring $\OO[[K]]\otimes_{\OO} L$.  According to \cite[Thm.\,4.2]{le} there is $n_0$ such that 
 $$ \delta(M_i/M_{i+1})\le \delta(M_0)-1= 0, \quad \forall i\ge n_0,$$
 where the last equality follows from Corollary \ref{delta_Pi}. Lemma \ref{dim0implies} implies that the quotients $\mathrm B_{i+1}/\mathrm B_i$ for $i\ge n_0$ are finite dimensional $L$-vector spaces. This contradicts the assumption that there
 are only finitely many such subquotients. 
 \end{proof}

\begin{thm}\label{main} Let $x\in \mSpec \TT(U^p)_{\mm}[1/p]$ be such that the restriction of the corresponding 
Galois representation $\rho_x: G_{F,S}\rightarrow \GL_2(\kappa(x))$ to $G_{F_{\pp}}$ is irreducible. 
Then $(\widehat{H}^1_{\psi, \lambda}(U^{\pp}, \OO)_{\mm}\otimes_\OO L)[\mm_x]$  is non-zero if and only if 
$(S_{\psi, \lambda}(U^\pp, \OO)_{\mm}\otimes_\OO L )[\mm_x]$ is non-zero.

In this case, there is an isomorphism of admissible unitary $\kappa(x)$-Banach space representations
of $G_{F_\pp}\times D_{\pp}^{\times}$: 
\begin{equation}\label{iso}
(\widehat{H}^1_{\psi, \lambda}(U^{\pp}, \OO)_{\mm}\otimes_{\OO} L)[\mm_x]\cong \Shat^1(\Pi)^{\oplus n},
\end{equation}
where $\Pi$ is the absolutely irreducible 
$\kappa(x)$-Banach space representation corresponding to $\rho_x|_{G_{F_\pp}}$ via the $p$-adic local Langlands correspondence for $\GL_2(\Qp)$.
In particular, the $\delta$-dimension of $(\widehat{H}^1_{\psi, \lambda}(U^{\pp}, \OO)_{\mm}\otimes_\OO L)[\mm_x]$ is $1$.
\end{thm}
\begin{proof} The assumption on $\rbar$ implies that \eqref{revision1} is an isomorphism. It follows from Proposition \ref{scholze_compatible} that 
$$\Shat^1((S_{\psi, \lambda}(U^\pp, \OO)_{\mm}\otimes_\OO L )[\mm_x])\cong (\widehat{H}^1_{\psi, \lambda}(U^{\pp}, \OO)_{\mm}\otimes_{\OO} L)[\mm_x],$$
as $G_{F_{\pp}}\times D_{\pp}^\times$-representations. The isomorphism \eqref{iso} is obtained by 
applying $\Shat^1$ to \eqref{central}. The assertion about the dimension follows from Corollary \ref{delta_Pi}.
\end{proof} 

\begin{remar} Since $\mm_x$ contains the ideal generated by $ S_v- \psi(\Frob_v)$ for all $v\not \in S$
Theorem \ref{main} implies Theorem \ref{thm_C} stated in the introduction.
\end{remar}

\begin{prop}\label{1-lalg} A unitary $D_\pp^\times$-representation $\mathrm B$ on a finite dimensional 
$L$-vector space with a central character is semisimple. Moreover,
$$\mathrm B \cong \bigoplus_{i=1}^n\bigoplus_{j=1}^m \Sym^{a_i} L^2 \otimes \dt^{b_i} \otimes \tau_j\otimes \eta_{ij}\circ \Nrd,$$
where $a_i\in \ZZ_{\ge 0}$, $b_i\in \ZZ$, $\tau_j$ is irreducible smooth, and $\eta_{ij}: \Qp^{\times}\rightarrow \OO^{\times}$ is a character. If the central character $\zeta_{\mathrm B}$ is locally algebraic then we may take $\eta_{ij}$ to be either trivial or $\eta_{ij}(x)= \sqrt{\pr(x|x|)}$, where $\pr: \Zp^{\times}\rightarrow 1+p \Zp$ denotes the projection.
\end{prop}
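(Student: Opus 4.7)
The plan is to prove the three assertions in turn: semisimplicity, the general tensor decomposition, and the refinement in the locally algebraic case.

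\textbf{Step 1 (Semisimplicity).} The key observation is that $D_{\pp}^{\times}/Z(D_{\pp}^{\times})$ is compact: a uniformiser $\Pi\in\OO_{D_{\pp}}$ satisfies $\Pi^{2}\in p\cdot\OO_{D_{\pp}}^{\times}$, whence $D_{\pp}^{\times}=Z(D_{\pp}^{\times})\OO_{D_{\pp}}^{\times}\sqcup\Pi\cdot Z(D_{\pp}^{\times})\OO_{D_{\pp}}^{\times}$ is compact modulo its centre. Fix a normal open uniform pro-$p$ subgroup $K^{\circ}\subset D_{\pp}^{\times}$ with $D_{\pp}^{\times}/(Z(D_{\pp}^{\times})K^{\circ})$ finite. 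After enlarging $L$ to contain a splitting field of $D_{\pp}$, differentiation of the $K^{\circ}$-action makes $\mathrm B$ into a finite-dimensional $\glt(L)$-module; Weyl's theorem gives semisimplicity of the induced $\slt$-action, hence of $\mathrm B|_{K^{\circ}}$ via the exponential for uniform pro-$p$ groups. Maschke's theorem applied to the finite quotient $D_{\pp}^{\times}/(Z(D_{\pp}^{\times})K^{\circ})$, using the fixed central character to descend, then upgrades this to semisimplicity of $\mathrm B$ as a $D_{\pp}^{\times}$-representation.

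\textbf{Step 2 (Decomposition of irreducibles).} Assume $\mathrm B$ is irreducible. Since $D_{\pp}^{\times}$ acts on $\slt$ by inner automorphisms, which preserve each isomorphism class $\Sym^{a}L^{2}$, the $\slt$-isotypic decomposition of $\mathrm B|_{K^{\circ}}$ is $D_{\pp}^{\times}$-stable; by irreducibility $\mathrm B|_{K^{\circ}}$ is $\slt$-isotypic of some type $\Sym^{a}L^{2}$. For any $b\in\Z$, the algebraic $\GL_{2}(L)$-representation $\Sym^{a}L^{2}\otimes\det^{b}$ restricts, via $D_{\pp}^{\times}\hookrightarrow\GL_{2}(L)$, to an algebraic $D_{\pp}^{\times}$-representation. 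Writing $\mathrm B\cong(\Sym^{a}L^{2}\otimes\det^{b})\otimes V$ as a $\glt$-module, the residual $\glt$-action on the multiplicity space $V$ factors through the scalar centre $L\cdot I\subset\glt$ and integrates on $K^{\circ}$ to a character $\eta\circ\Nrd$, for some continuous $\eta:\Qp^{\times}\to L^{\times}$ extending the induced character on $\Nrd(K^{\circ})\subset 1+p\Zp$. Setting $\tau:=V\otimes(\eta\circ\Nrd)^{-1}$, which is trivial on $K^{\circ}$ hence smooth, and irreducible, we obtain $\mathrm B\cong\Sym^{a}L^{2}\otimes\det^{b}\otimes\tau\otimes(\eta\circ\Nrd)$.

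\textbf{Step 3 (Locally algebraic case).} Suppose $\zeta_{\mathrm B}(x)=x^{n}\chi(x)$ with $\chi$ smooth. Unitarity of $\eta$ and matching $p$-adic valuations on both sides of the central character equation force the algebraic exponent $a+2b$ to equal $n$, after which $\eta^{2}=\chi/\zeta_{\tau}$ is smooth. For $p>2$ the pro-$p$ group $1+p\Zp$ is uniquely $2$-divisible and its image under $\eta$ lies in $1+\varpi\OO$; since $\{\pm 1\}\cap(1+\varpi\OO)=\{1\}$, any continuous unitary $\eta$ with smooth square must itself be smooth on $1+p\Zp$. The remaining ambiguity concerns only the components $\mu_{p-1}\cdot p^{\Z}$ and the two cosets of $Z(D_{\pp}^{\times})K^{\circ}$ in $D_{\pp}^{\times}$; after absorbing the smooth part of $\eta$ into $\tau$ and any algebraic part into $\det^{b}$, the non-absorbable survivor is either trivial or the canonical unitary square root $\sqrt{\pr(x|x|)}$ of the projection character $\pr(x|x|):\Qp^{\times}\to 1+p\Zp$, giving the two stated cases.

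\textbf{Main obstacle.} The principal difficulty lies in Step 3: the precise bookkeeping that reduces the continuous unitary character $\eta$ produced in Step 2, once $\zeta_{\mathrm B}$ is locally algebraic, to exactly one of the two canonical representatives $\{1,\sqrt{\pr(x|x|)}\}$, together with the identification of the non-trivial survivor via the unique $2$-divisibility of $1+p\Zp$. Steps 1 and 2 are largely formal consequences of the compactness of $D_{\pp}^{\times}/Z(D_{\pp}^{\times})$, Weyl's theorem for $\slt$, and the integration--differentiation correspondence for uniform pro-$p$ groups.
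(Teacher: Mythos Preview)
Your overall strategy matches the paper's: both arguments pass to the Lie algebra, invoke Weyl's theorem for $\slt$ to get semisimplicity, promote this to the group via compactness modulo centre, and then analyse the multiplicity space. Steps 1 and 2 are essentially correct, though in Step 2 you should say explicitly that the centre of $\glt$ acts on the multiplicity space $V$ by a \emph{scalar} (because $Z(D_\pp^\times)$ acts on $\mathrm B$ by the fixed character $\zeta_{\mathrm B}$ and on $\Sym^a L^2$ by $z\mapsto z^a$); only then does the $K^\circ$-action on $V$ integrate to a single character rather than a sum of characters.

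The genuine gap is in Step 3. Your sentence ``unitarity of $\eta$ and matching $p$-adic valuations \ldots\ force the algebraic exponent $a+2b$ to equal $n$'' is false: nothing forces this. The integer $b$ is at your disposal, but $a+2b$ can only hit integers of the same parity as $a$, so $a+2b=n$ is achievable precisely when $n\equiv a\pmod 2$. The dichotomy in the statement is exactly this parity obstruction, and your argument as written only treats the even case (where $\eta^2$ becomes smooth and your $2$-divisibility argument applies). When $n-a$ is odd, choose $b$ so that $n-a-2b=1$; then on a small $1+p^N\Zp$ one has $\eta(z)^2=z$, and the unique continuous square root of $z\mapsto z$ on $1+p\Zp$ valued in $1+\varpi\OO$ is $z\mapsto\sqrt{z}$, which extends to $\Qp^\times$ as $x\mapsto\sqrt{\pr(x|x|)}$ after absorbing the smooth discrepancy into $\tau$. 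This is how the paper proceeds: it notes that after twisting $\mathrm B$ by $x\mapsto\sqrt{\pr(x|x|)}$ if necessary one may assume $n-a$ is even, and then takes $b=(n-a)/2$ so that $\Hom_{U(\glt)}(\Sym^a L^2\otimes\det^b,\mathrm B)\neq 0$ directly. Once you replace the ``forcing'' claim with this parity case split, your Step 3 goes through.
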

\begin{proof} We closely follow the proof of Proposition 3.2 of \cite{finite}. We first note that  any continuous action of a $p$-adic analytic group on a finite dimensional $L$-vector space
is automatically locally analytic, and hence induces an action of the universal enveloping algebra  
of its Lie algebra. Let $D_{\pp}^{\times, 1}=\{ g \in D_{\pp}^{\times}: \Nrd(g)=1\}$ and let $\mathfrak h$ be its Lie algebra. 
The Lie algebra of $D_\pp^\times$ is just $D_{\pp}$. We assume $L$ to be sufficiently large so that $D_{\pp}$ splits over $L$. Thus we have an isomorphism of $L$-Lie algebras 
$\mathfrak h\otimes_{\Qp} L \cong \slt$. This induces an isomorphism of enveloping algebras  
$U(\mathfrak h)\otimes_{\Qp} L\cong U(\slt)$ and an embedding  $D_{\pp}^{\times, 1}\hookrightarrow \SL_2(L)$. Let $J$ be the kernel of the 
map $U(\slt)\rightarrow \End_L(\mathrm B)$.  Since $\mathrm B$ is a finite dimensional $L$-vector space the codimension of  $J$ is finite and as explained in the proof of Proposition 3.2  of \cite{finite}
the algebra $U(\slt)/J$ is semi-simple and evaluation induces an isomorphism of $U(\slt)$-modules:
\begin{equation}\label{PGL3Q2}
\bigoplus_{a\ge 0} \Sym^a L^2\otimes_L \Hom_{U(\slt)}( \Sym^a L^2, \mathrm B)\overset{\cong}{\longrightarrow} \mathrm B.
\end{equation}
We may upgrade this isomorphism to an isomorphism of $D_{\pp}^{\times, 1}$-representations as follows. We let  $D_{\pp}^{\times, 1}$ act on 
$\Sym^a L^2$ via the embedding  $D_{\pp}^{\times, 1}\hookrightarrow \SL_2(L)$ above and on $\Hom_L(\Sym^a L^2, \mathrm B)$ by conjugation. 
Since $\Hom_L(\Sym^a L^2, \mathrm B)$ is a finite dimensional $L$-vector space, the corresponding representation is locally analytic. 
It follows from Proposition 2.1 in \cite{finite} that the smooth vectors for this action are equal to $\Hom_{U(\slt)}( \Sym^a L^2, \mathrm B)$, which makes it into 
a smooth representation of $D_{\pp}^{\times, 1}$. If we put the diagonal action of $D_{\pp}^{\times, 1}$ on $\Sym^a L^2\otimes_L \Hom_{U(\slt)}( \Sym^a L^2, \mathrm B)$ then 
the evaluation map is $D_{\pp}^{\times, 1}$-equivariant and hence the same holds for \eqref{PGL3Q2}. Since $D_{\pp}^{\times, 1}$ is a compact group, the category of 
smooth representations on $L$-vector spaces is semisimple. Moreover, if $\tau$ is an irreducible smooth 
representation of $D_{\pp}^{\times, 1}$, then  $\Sym^a L^2\otimes \tau$ is irreducible by Proposition 3.4 in \cite{finite}. 
Thus we have shown that the restriction of $\mathrm B$ to $D_{\pp}^{\times, 1}$ is semisimple. Since the centre $F^{\times}_\pp$ acts by a central character by assumption, the restriction of $\mathrm B$ to $D_{\pp}^{\times, 1} F^{\times}_\pp$ 
is semisimple. Since $D_{\pp}^{\times, 1} F^{\times}_\pp$ is of finite index in $D_\pp^\times$, this implies that $\mathrm B$ is a 
semisimple representation of $D_\pp^\times$.

Let us assume that $\mathrm B$ is absolutely irreducible, and let $a$ be such that 
$$\Hom_{U(\slt)}(\Sym^a L^2, \mathrm B)\neq 0.$$
Let $\pr: \OO^{\times}\rightarrow 1+\pp$ denote the projection to the principal units. We use the same symbol for $\Zp^{\times}\rightarrow 1+p\Zp$.
Since $p> 2$ 
we may define a continuous square root on $1+\pp$ by the usual binomial formula. Let $\eta: \Qp^{\times}\rightarrow \OO^{\times}$ be the character 
$\eta(x)= \sqrt{ \pr(\zeta_{\mathrm B}(x))^{-1} \pr(x|x|)^a}$, where $\zeta_{
\mathrm B}$ is the central character of $\mathrm B$.
Then the restriction of the central character of $\mathrm B \otimes \eta\circ\Nrd$ to $1+p \Zp$ is 
equal to $x\mapsto x^a$. If $H$ is an open subgroup of $D_{\pp}^{\times, 1}$ then $(1+p \Zp)H$ is an open subgroup of $D_\pp^{\times}$ and so their Lie 
algebras will coincide. Hence, $\Hom_{U(\glt)}(\Sym^a L^2, \mathrm B \otimes \eta\circ\Nrd)\neq 0$,
and arguing as before we conclude that $\mathrm B \otimes \eta\circ\Nrd\cong \Sym^a L^2 \otimes \tau$, 
where $\tau$ is a smooth irreducible representation of $D_\pp^{\times}$. 

If $\zeta_{\mathrm B}$ is 
locally algebraic then $\zeta_{\mathrm B}(x)= x^c \zeta_{\sm}(x)$ for all $x\in \Qp^\times$, where $\zeta_\sm$ is a
smooth character. After possibly  twisting $\mathrm B$ by  the character $x \mapsto \sqrt{ x|x|}$ 
(or its inverse) we may assume that $a-c$ is even. Then $\Hom_{U(\glt)}(\Sym^a L^2\otimes \det^b, \mathrm B)\neq 0$, where $a-c=2b$ and we may conclude as before.
\end{proof}

\begin{remar} The characters appearing in Proposition \ref{1-lalg} are not uniquely determined, since we may write the character $\det^a$ as a product of a unitary character 
$\pr(\det^a)$, and a smooth character $(\det^a \pr(\det^a)^{-1})$.
\end{remar}

We say that an irreducible component of a potentially semistable deformation ring is of \textit{discrete series type} if the closed points in the generic fibre corresponding to Galois representations, 
which do not become crystalline after restricting to the Galois group of an abelian extension, are Zariski dense in that component.

\begin{prop}\label{yeti} Assume the set up of Theorem \ref{main}. Let  $\Shat^1(\Pi)^{\onealg}$
be the subset of $D_{\pp}^{\times, 1}$-locally algebraic vectors in $\Shat^1(\Pi)$, where $D_{\pp}^{\times, 1}$
is the subgroup of $D_{\pp}^{\times}$ of elements with reduced norm equal to $1$.
Then $\Shat^1(\Pi)^{\onealg}$ is a finite dimensional $L$-vectors space. Moreover, 
if $\Shat^1(\Pi)^{\onealg}$ is non-zero then a twist of $\rho_x|_{G_{F_\pp}}$ by a character, 
defines a point lying on an irreducible component of discrete series type of some potentially semi-stable deformation ring of $\rbar$.
\end{prop}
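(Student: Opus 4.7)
The plan is to deduce Proposition~\ref{yeti} from Theorem~\ref{main} by translating it into a statement about locally algebraic vectors in the global completed cohomology, and then applying Emerton's theory \cite{interpolate} (adapted to Shimura curves over totally real fields as in \cite{newton_pJL}) together with the classical Jacquet--Langlands correspondence. First I would note that $\Shat^1(\Pi)^{1-\alg}$ is $D_\pp^{\times}$-stable, since $D_{\pp}^{\times, 1}$ is normal in $D_\pp^{\times}$. Writing $\zeta_\Pi$ for the central character of $\Shat^1(\Pi)$, one can choose a continuous character $\eta\colon \Qp^{\times}\to L^{\times}$ such that $\eta^{2}\zeta_\Pi$ is locally algebraic, by the binomial square-root argument used in the proof of Proposition~\ref{1-lalg}. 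Since the reduced norm kills $D_{\pp}^{\times, 1}$, twisting by $\eta\circ \Nrd$ does not alter the $D_{\pp}^{\times, 1}$-action but makes the central character locally algebraic; hence $D_{\pp}^{\times, 1}$-locally algebraic vectors in $\Shat^1(\Pi)$ correspond to genuine $D_\pp^{\times}$-locally algebraic vectors in $\Shat^1(\Pi)\otimes(\eta\circ \Nrd)$.

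Next, assuming $\Shat^1(\Pi)^{1-\alg}\neq 0$, Theorem~\ref{main} transfers a non-zero locally algebraic vector into a twist of $(\widehat{H}^{1}_{\psi, \lambda}(U^{\pp}, \OO)_\mm \otimes_\OO L)[\mm_x]$. The theory of locally algebraic vectors in completed cohomology of Shimura curves over totally real fields---where $\lambda$ already pins down locally algebraic behaviour at the places above $p$ different from $\pp$---identifies such vectors with classical cuspidal automorphic forms on $D^{\times}$. Since $D_\pp$ is the non-split quaternion algebra over $\Qp$, the classical Jacquet--Langlands correspondence forces the local component of the attached automorphic representation at $\pp$ to be the JL transfer of a discrete series representation of $\GL_2(\Qp)$. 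Combining this with Colmez's compatibility of classical and $p$-adic Langlands correspondences, used already in the proof of Proposition~\ref{equal_actions}, shows that $\rho_x|_{G_{F_\pp}}$, after a twist, is potentially semistable with a Weil--Deligne parameter which does not become a sum of characters after restricting to any abelian extension; this is exactly the statement that it lies on an irreducible component of discrete series type of a potentially semistable deformation ring of $\rbar$.

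For finite-dimensionality, I would observe that once the Hodge--Tate weights and the inertial Weil--Deligne type at $\pp$ of $\rho_x|_{G_{F_\pp}}$ are fixed, Colmez's theorem pins down the algebraic part $\Sym^{a} L^{2}\otimes \det^{b}$ of any irreducible constituent of $\Shat^1(\Pi)^{1-\alg}$ (via the classification in Proposition~\ref{1-lalg}) and restricts the smooth part $\tau$ to finitely many isomorphism classes. Admissibility of $\Shat^1(\Pi)$ as a Banach space representation of $D_\pp^{\times}$ then bounds each isotypic component to be finite-dimensional, yielding finite-dimensionality of $\Shat^1(\Pi)^{1-\alg}$.

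The main obstacle I anticipate is globalizing the twist $\eta\circ \Nrd$: the character $\eta$ need not arise from a global Hecke character, so one cannot simply twist $\widehat{H}^{1}_{\psi, \lambda}$ by $\eta$ on the nose. One must either run Emerton's argument directly for $D_{\pp}^{\times, 1}$-locally algebraic vectors, or absorb the twist into the auxiliary data $\lambda$ at the other places above $p$, as suggested by the introduction's remark about $p$-adic automorphic forms which are locally algebraic at all places above $p$ except one, where they are so only after a twist by a non-locally-algebraic character. Handling this bookkeeping carefully, using the interplay between $\psi$, $\lambda$ and the tame level $U^\pp$, will occupy the bulk of the argument.
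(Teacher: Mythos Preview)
Your overall strategy coincides with the paper's: embed $\Shat^1(\Pi)$ into completed cohomology via Theorem~\ref{main}, identify locally algebraic vectors with classical automorphic forms on $D^\times$ via Emerton's theory, and use that $D_\pp$ is non-split to force discrete series behaviour. You have correctly located the crux---globalising the twist $\eta$---but your two suggested workarounds (run Emerton for $D_{\pp}^{\times,1}$, or absorb $\eta$ into $\lambda$) are not how the paper resolves it, and the second would fail because Emerton's argument needs $\lambda$ to remain locally algebraic.

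The paper's resolution has two ingredients you are missing. First, rather than twisting locally by an arbitrary $\eta$, one performs a preliminary \emph{global} twist (as in Corollary~\ref{actions_coincide}) so that $\psi|_{(\AfF)^\times\cap U_p}$, and hence the central character $\zeta$, is already locally algebraic. Proposition~\ref{1-lalg} then does not produce an arbitrary $\eta$ but only $\eta=1$ or $\eta(x)=\sqrt{\pr(x|x|)}$, and the latter \emph{is} the restriction of the global character $\sqrt{\pr(\chi_{\cyc})}$. Second, the paper does not work with a fixed $\lambda$: it proves, for each locally algebraic $\sigma$ of $U_\pp$, that the $R^{\ps}_{\tr\rbar}$-action on $\Hom_{U_\pp}(\sigma,\widehat H^1_\psi(U^p,\OO)_\mm\otimes L)$ factors through the discrete-series quotient $R^{\ps}_{\tr\rbar}(\wt,\tau,\psi)^{\mathrm{ds}}$. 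The point is that this Hom space, as a $U_p^\pp$-representation, is a finite sum of copies of $C_\psi(U_p^\pp)$; one approximates by the $\lambda$-isotypic pieces for \emph{varying} algebraic $\lambda$ and shrinking level $K_p^\pp$, which are dense and to which Emerton's spectral sequence applies. When $\eta$ is non-trivial one reruns this claim with $\psi$ replaced by $\psi\eta^{-2}$; the fact that $\lambda\otimes\eta^{-1}\circ\Nrd$ is no longer locally algebraic is irrelevant, since the claim never mentions $\lambda$. (A minor point: the relevant input on the $D^\times$ side is the quaternionic theory of types \cite{gee_geraghty} and ordinary local--global compatibility, not Colmez's theorem.) Your finite-dimensionality argument is then essentially the paper's: $\rho_x|_{G_{F_\pp}}$ pins down $(a,b)$, $\sigma_{\sm}$ up to $\varpi_D$-conjugate, and the parity deciding $\eta$, so $\Shat^1(\Pi)^{1-\alg}$ is a finite sum of copies of at most two fixed irreducibles.
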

\begin{proof} After twisting by a character  $\chi: (\AfF)^{\times}/F^{\times}\rightarrow 1+\pp$, 
which  is trivial on  $U^p \cap (\AfF)^{\times}$ we may assume that the restriction of $\psi$ to 
$(\AfF)^\times \cap U_p$ is locally algebraic.   

Let $\sigma:=\sigma_{\alg}\otimes\sigma_{\sm}$ be an irreducible locally algebraic representation of $U_{\pp}=\OO_{D_{\pp}}^{\times}$ with central character $\zeta$, where 
$\sigma_{\alg}=\Sym^b L^2\otimes \det^a$ and $\sigma_{\sm}$ is smooth. To $\sigma_{\sm}$ 
one may attach an inertial type $\tau: I_{\Qp}\rightarrow \GL_2(L)$, such that $\tau$ extends to 
a representation of the Weil group $W_{\Qp}$, the kernel of $\tau$ is an open subgroup of $I_{\Qp}$ 
and the following holds: if $\pi'$ is a smooth irreducible representation of $D_{\pp}^{\times}$, $\pi$ 
is a smooth irreducible representation of $\GL_2(\Qp)$ corresponding to $\pi'$ via the classical 
Jacquet--Langlands correspondence, and $\rec_p(\pi)$ is the Weil--Deligne representation corresponding
to $\pi$ via the classical local Langlands correspondence then 
$\Hom_{U_{\pp}}(\sigma_{\sm}, \pi')\neq 0$ if and only if $\rec_p(\pi)|_{I_{\Qp}}\cong \tau$, see 
\cite[Thm.\,3.3]{gee_geraghty}. Note that in that case either $\pi$ is supercuspidal and  $\tau$ extends 
to an irreducible representation of $W_{\Qp}$ and the monodromy operator of $\rec_p(\pi)$ is zero, or 
$\pi$ is special series and the monodromy operator of $\rec_p(\pi)$ is non-zero.  Moreover, the inertial type $\tau$ determines $\sigma_{\sm}$ up to conjugation by a uniformizer $\varpi_D$ of $D$.

Let $\wt=(1-a, -a-b)$ and let $R^{\square}_{\rbar}(\wt, \tau, \psi)$ be the framed deformation ring 
of $\rbar$ parameterising potentially semistable lifts of $\rbar$, which have Hodge--Tate weights
equal to $\wt$, inertial type $\tau$ and determinant $\psi\varepsilon^{-1}$. Let $R^{\square}_{\rbar}(\wt, \tau, \psi)^{\mathrm{ds}}$ be the closure of closed points in the generic fibre, which  do not correspond to crystabelline representations. In the supercuspidal case $R^{\square}_{\rbar}(\wt, \tau, \psi)^{\mathrm{ds}}$ and $R^{\square}_{\rbar}(\wt, \tau, \psi)$ coincide. In the special series case, 
$R^{\square}_{\rbar}(\wt, \tau, \psi)^{\mathrm{ds}}$ is the union of irreducible components of  $R^{\square}_{\rbar}(\wt, \tau, \psi)$ of discrete series type and the potentially crystalline locus has codimension $1$.
Mapping a deformation to its trace induces a natural map $R_{\tr \rbar}^{\ps}\rightarrow R^{\square}_{\rbar}(\wt, \tau, \psi)^{\mathrm{ds}}$ and we let $R_{\tr \rbar}^{\ps}(\wt, \tau, \psi)^{\mathrm{ds}}$ be its image.

 We claim that the action of $R^{\ps}_{\tr \rbar}$ on 
$\Hom_{U_\pp}(\sigma, \widehat{H}^1_{\psi}(U^p, \OO)_\mm\otimes_{\OO} L)$ via the homomorphism 
$R_{\tr \rbar}^\ps\rightarrow \TT(U^p)_{\mm}$ factors through the quotient $R_{\tr \rbar}^{\ps}(\wt, \tau, \psi)^{\mathrm{ds}}$. It follows from Proposition \ref{projective} that as $U_p^{\pp}$-representation 
$\Hom_{U_\pp}(\sigma, \widehat{H}^1_{\psi}(U^p, \OO)_\mm\otimes_{\OO} L)$ is isomorphic to 
a finite direct sum of copies of the Banach space $C_{\psi}(U^{\pp}_p)$ of continuous functions $f: U_p^{\pp}\rightarrow L$, 
on which the centre acts by $\psi$ and $U^{\pp}_p$ acts by right translations. Since the action of $R^{\ps}_{\tr \rbar}$ is continuous it is enough to 
show that it factors through $R_{\tr \rbar}^{\ps}(\wt, \tau, \psi)^{\mathrm{ds}}$ on a dense subspace. 
For this choose an open subgroup $V^{\pp}_p$ of $U^{\pp}_p$ such that the restriction of $\psi$ 
to the center of $V^{\pp}_p$ is algebraic, and let $\lambda$ be an algebraic representation of $V^{\pp}_p$ 
with central character $\psi$. The union of $\lambda$-isotypic subspaces inside $C_{\psi}(U^{\pp}_p)$
as $K_{p}^{\pp}$-representation taken over all open subgroups $K_p^{\pp}$ of $V^{\pp}_p$ will be dense, since locally constant functions are dense in the space of continuous functions. Thus 
it is enough to prove that the action of $R^{\ps}_{\tr \rbar}$ on 
$\Hom_{K_p^{\pp} U_{\pp}}(\lambda\otimes \sigma, \widehat{H}^1_{\psi}(U^p, \OO)_\mm\otimes_{\OO} L)$
factors through $R_{\tr \rbar}^{\ps}(\wt, \tau, \psi)^{\mathrm{ds}}$ for all open subgroups $K_p^\pp$.
It follows from Emerton's spectral sequence in \cite[Cor.2.2.8]{interpolate}, see \cite[Prop. 5.2]{newton_pJL}, 
that we have an isomorphism of $\TT(U^p)_{\mm}$-modules: 
$$ \Hom_{U_{\pp}}(\sigma_{\sm}, H^1(X(K_p^{\pp} U_{\pp}), \mathcal V_W)_{\mm}[\psi_{\sm}])\cong 
\Hom_{K_p^{\pp} U_{\pp}}(\lambda\otimes \sigma, \widehat{H}^1_{\psi}(U^p, \OO)_\mm\otimes_{\OO} L),$$
where $\mathcal V_W$ is a local system on $X(K_p^{\pp} U_{\pp})$ corresponding to the algebraic representation $W:= (\lambda\otimes \sigma)^*$ and $\psi_{\sm}=\psi\psi_{\alg}^{-1}$, and $\psi_{\alg}$ is the central character of $\lambda\otimes \sigma$. The left hand side of the equation is 
a semi-simple $\TT(U^p)_{\mm}[1/p]$-module and the eigenvalues correspond to an automorphic forms, see
\cite[\S 3]{interpolate}, satisfying local conditions imposed by $\sigma_{\sm}$ at $\pp$, $W$ at infinite places. The compatibility of local and global Langlands correspondence implies that if $\rho_x$ is the Galois representation attached to such automorphic form then $\tr \rho_x|_{G_{F_{\pp}}}$ gives a point
in $\Spec R_{\tr \rbar}^{\ps}(\wt, \tau, \psi)^{\mathrm{ds}}$. This finishes the proof of the claim.

Let us fix $x\in \mSpec \T(U^p)_\mm[1/p]$, let $y$ be its image in $\Spec R^{\ps}_{\tr \rbar}$ and assume that 
$$\Hom_{U(\slt)}(\Sym^b L^2, (\widehat{H}^1_{\psi, \lambda}(U^{\pp}, \OO)\otimes_{\OO} L)[\mm_x])\neq 0.$$
It follows from Proposition \ref{1-lalg} that 
$$\Hom_{U_{\pp}}(\sigma \otimes \eta\circ \Nrd, (\widehat{H}^1_{\psi, \lambda}(U^{\pp}, \OO)\otimes_{\OO} L)[\mm_x])\neq 0,$$
where $\sigma$ is as above and $\eta$ is either trivial or $\sqrt{\pr(\chi_{\cyc})}$. If $\eta$ is trivial 
then the claim implies that $y$ lies in $\Spec R^{\square}_{\rbar}(\wt, \tau, \psi)^{\mathrm{ds}}$.
We note that our assumption on $\rbar$ implies that any reducible potentially semistable lift $r$ with Hodge-Tate weights $\wt$ is crystabelline and its $\WD(r)|_{W_{\Qp}}$ is a direct sum of distinct characters. Such representations cannot correspond to points on irreducible components of discrete series type. Hence $\rho_x|_{G_{F_{\pp}}}$ is irreducible and thus is determined by its trace.

If $\eta$ is not trivial then by twisting by its inverse and using the claim again, we deduce that $\tr (\rho_x|_{G_{F_\pp}}\otimes \eta^{-1})$ gives a point in $\mSpec R^{\square}_{\rbar}(\wt, \tau, \psi \eta^{-2})^{\mathrm{ds}}[1/p]$. 
Note that the character $\psi\eta^{-2}=\psi \pr(\chi_{\cyc})^{-1}$ is locally algebraic, but the representation
$\lambda\otimes \eta^{-1}\circ\Nrd$ is not. That is why we cannot appeal directly to the results of Emerton in this case. Thus $\rho_x|_{G_{F_{\pp}}}$ determines the integers $a$ and $b$, the representation 
$\sigma_{\sm}$ up to its conjugate by the uniformizer $\varpi_D$ of $D_{\pp}^{\times}$, and whether $\eta$ is trivial or not, by comparing whether the Hodge--Tate weight of $\det\rho_x|_{G_{F_{\pp}}}$
has the same parity as the Hodge--Tate weight of $\psi|_{G_{F_{\pp}}}$.
 This implies that $\Shat^1(\Pi)^{\onealg}$ is isomorphic as a representation of $U_{\pp}$ to a finite direct 
sum of copies of $\sigma \otimes \eta\circ \Nrd$ or its conjugate by $\varpi_D$. In particular, $\Shat^1(\Pi)^{\onealg}$ is finite dimensional.
\end{proof}

\begin{remar} If $F=\QQ$ then the proof of Proposition \ref{yeti} can be simplified, 
since after twisting we may directly appeal to the results of Emerton on locally algebraic vectors in completed cohomology.
\end{remar}

\begin{thm}\label{main_2} Assume the set up of Theorem \ref{main} and the notation of Proposition \ref{yeti}. 
The quotient $\Shat^1(\Pi)/\Shat^1(\Pi)^{\onealg}$ contains an irreducible closed subrepresentation of $\delta$-dimension $1$.
\end{thm}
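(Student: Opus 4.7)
The plan is to build the required subrepresentation in three steps: first verifying that $\mathrm B:=\Shat^1(\Pi)/\Shat^1(\Pi)^{1-\alg}$ is a non-zero admissible unitary Banach space representation of $\delta$-dimension one, then showing that $\mathrm B$ has no non-zero finite-dimensional closed subrepresentation, and finally extracting an irreducible piece by noetherianity. Since $\Shat^1(\Pi)^{1-\alg}$ is finite-dimensional by Proposition~\ref{yeti}, it is closed, so $\mathrm B$ is again an admissible unitary $L$-Banach space representation of $D_\pp^\times$. Any finite-dimensional Banach space representation has $\delta$-dimension $0$, while $\delta(\Shat^1(\Pi))=1$ by Corollary~\ref{delta_Pi}; Schikhof-dualizing the exact sequence $0\to \Shat^1(\Pi)^{1-\alg}\to \Shat^1(\Pi)\to \mathrm B\to 0$ and using the identity $\delta(M)=\max(\delta(M'),\delta(M''))$ for short exact sequences of finitely generated modules over the Auslander regular ring $A:=\OO[[K]]\otimes_{\OO}L$ (for $K$ an open uniform pro-$p$ subgroup of $D_\pp^\times$) yields $\delta(\mathrm B)=1$. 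In particular $\mathrm B\neq 0$.

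The heart of the argument is the claim that $\mathrm B$ contains no non-zero finite-dimensional closed subrepresentation. Given any such $\mathrm B_2\subset \mathrm B$, let $E\subset \Shat^1(\Pi)$ denote its preimage, which is closed and fits into
\begin{equation*}
0\to \Shat^1(\Pi)^{1-\alg}\to E\to \mathrm B_2\to 0
\end{equation*}
with both endpoints finite-dimensional, so $E$ itself is finite-dimensional. Since $E$ has a central character inherited from $\Pi$, Proposition~\ref{1-lalg} decomposes $E$ into summands of the form $\Sym^{a_i}L^2\otimes \det^{b_i}\otimes \tau_j\otimes \eta_{ij}\circ \Nrd$. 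The twists $\det^{b_i}$ and $\eta_{ij}\circ \Nrd$ are trivial on $D_\pp^{\times,1}$ and $\tau_j$ is smooth, so the restriction of each summand to $D_\pp^{\times,1}$ is locally algebraic; hence $E\subset \Shat^1(\Pi)^{1-\alg}$, forcing $\mathrm B_2=0$.

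For the final step, choose an open bounded $D_\pp^\times$-invariant lattice $\Theta\subset \mathrm B$ and set $M:=\Theta^d\otimes_{\OO}L$, a finitely generated $A$-module of $\delta$-dimension $1$. By noetherianity of $A$, the non-empty family of proper submodules $N\subsetneq M$ with $\dim_A(M/N)=1$ admits a maximal element $N_{\max}$, and via Schikhof duality the quotient $M/N_{\max}$ corresponds to a non-zero closed subrepresentation $\mathrm B_1\subset \mathrm B$ of $\delta$-dimension $1$. The maximality of $N_{\max}$ ensures that every non-zero proper closed subrepresentation of $\mathrm B_1$ has $\delta$-dimension $0$, hence is finite-dimensional by Lemma~\ref{dim0implies}; combining this with the previous step rules out any such subrepresentation, so $\mathrm B_1$ has no non-zero proper closed subrepresentation at all and is therefore irreducible. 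The main obstacle is the second step: one must rule out every finite-dimensional closed subrepresentation of the quotient $\mathrm B$, which relies on the fact (Proposition~\ref{1-lalg}) that \emph{every} finite-dimensional unitary $D_\pp^\times$-representation with a central character becomes $D_\pp^{\times,1}$-locally algebraic after peeling off twists by characters of the reduced norm, allowing the preimage trick to close.
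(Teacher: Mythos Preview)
Your proof is correct and shares the paper's key idea: any finite-dimensional closed subrepresentation of $\mathrm B$ pulls back to a finite-dimensional subspace of $\Shat^1(\Pi)$, which Proposition~\ref{1-lalg} forces to be $D_{\pp}^{\times,1}$-locally algebraic and hence to lie inside $\Shat^1(\Pi)^{1-\alg}$. The difference lies in how the irreducible subrepresentation is produced. The paper simply invokes admissibility: since $\mathrm B$ is admissible its Schikhof dual is finitely generated over the noetherian ring $A$, so the set of non-zero closed subrepresentations of $\mathrm B$ satisfies the descending chain condition and a minimal (hence irreducible) one exists; then the preimage argument rules out $\delta$-dimension $0$ for that irreducible piece. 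You instead manufacture the irreducible piece by hand, maximising among $D_\pp^\times$-stable submodules $N\subsetneq M$ with $\delta(M/N)=1$, and then use the preimage step to kill any proper sub. Both routes work; the paper's is shorter because it separates the existence of an irreducible sub (a general feature of admissible Banach representations) from the dimension computation.

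One small point of care in your version: when you write ``by noetherianity of $A$, the family of proper submodules $N\subsetneq M$ with $\dim_A(M/N)=1$ admits a maximal element'', you must restrict to $D_\pp^\times$-stable submodules, since otherwise $M/N_{\max}$ need not correspond to a subrepresentation of $\mathrm B$. This is harmless because $D_\pp^\times$-stable submodules are in particular $A$-submodules, so noetherianity of $A$ still gives the required maximal element; but it should be said.
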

\begin{proof} Since the $\delta$-dimension of $\Shat^1(\Pi)$ is $1$ by Proposition \ref{delta_Pi} and 
the $\delta$-dimension of $\Shat^1(\Pi)^{\onealg}$ is $0$ by Proposition \ref{yeti} and Lemma \ref{dim0implies}, the quotient $\Shat^1(\Pi)/\Shat^1(\Pi)^{\onealg}$ is non-zero and has $\delta$-dimension $1$. Since it is admissible it will contain an irreducible subrepresentation $\mathrm B$. We have $\delta(\mathrm B)\le 1$. 
If $\delta(\mathrm B)=1$ then we are done, otherwise $\delta(\mathrm B)=0$ and so $\mathrm B$ is 
a finite dimensional $L$-vector space by Lemma \ref{dim0implies}. The extension 
of $\Shat^1(\Pi)^{\onealg}$ by $\mathrm B$ inside $\Shat^1(\Pi)$ is a finite dimensional 
$L$-vector space, and Proposition \ref{1-lalg} implies that the action of $D_{\pp}^{\times, 1}$ on it is locally algebraic. 
Since $\Shat^1(\Pi)^{\onealg}$ is the maximal subspace of $\Shat^1(\Pi)$ with this property we conclude that $\mathrm  B=0$.
\end{proof}

\end{document}